\title{Automorphism groups of randomized structures}
\author{Tom\'as Ibarluc\'ia}
\address{Universit\'e de Lyon \\Institut Camille Jordan \\43 blvd. du 11 novembre 1918\\69622 Villeurbanne Cedex \\France \\ \href{mailto:ibarlucia@math.univ-lyon1.fr}{\tt ibarlucia@math.univ-lyon1.fr}}
\thanks{\emph{2010 Mathematics Subject Classification:} Primary: 22F50, 22F10, 22A05, 03C30. Secondary: 22A15, 03C45. \emph{Keywords:} randomization, measurable wreath product, $\aleph_0$-categorical, Roelcke precompact, beautiful pairs}
\g@addto@macro\bfseries{\boldmath}
\def\mbA{\mathbb{A}}
\def\mbE{\mathbb{E}}
\def\mbP{\mathbb{P}}
\def\mbN{\mathbb{N}}
\def\Reg{\mathfrak{R}}
\def\mcM{\mathcal{M}}
\def\mcS{\mathcal{S}}
\def\mcK{\mathcal{K}}
\def\mcH{\mathcal{H}}
\DeclareMathOperator{\Aut}{Aut}
\DeclareMathOperator{\End}{End}
\DeclareMathOperator{\Iso}{Iso}
\newcommand{\actson}{\curvearrowright}
\def\acl{{\rm\text{acl}}}
\def\tp{{\rm\text{tp}}}
\def\Ind#1#2{#1\setbox0=\hbox{$#1x$}\kern\wd0\hbox to 0pt{\hss$#1\mid$\hss}
\lower.9\ht0\hbox to 0pt{\hss$#1\smile$\hss}\kern\wd0}
\def\Notind#1#2{#1\setbox0=\hbox{$#1x$}\kern\wd0\hbox to 0pt{\mathchardef
\nn="3236\hss$#1\nn$\kern1.4\wd0\hss}\hbox to 0pt{\hss$#1\mid$\hss}\lower.9\ht0
\hbox to 0pt{\hss$#1\smile$\hss}\kern\wd0}
\theoremstyle{plain}        \newtheorem{fact}{Fact}[section]
\theoremstyle{plain}        \newtheorem{theorem}[fact]{Theorem}
\theoremstyle{plain}        \newtheorem{lem}[fact]{Lemma}
\theoremstyle{plain}        \newtheorem{prop}[fact]{Proposition}
\theoremstyle{plain}        \newtheorem{cor}[fact]{Corollary}
\theoremstyle{definition}   \newtheorem{rem}[fact]{Remark} 
\theoremstyle{definition}   \newtheorem{defin}[fact]{Definition}
\theoremstyle{definition}   \newtheorem{notation}[fact]{Notation}
\theoremstyle{definition}   
\theoremstyle{definition}   \newtheorem{example}[fact]{Example}
\theoremstyle{definition}   \newtheorem{question}[fact]{Question}
\begin{document}
\begin{abstract} We study automorphism groups of randomizations of separable structures, with focus on the $\aleph_0$-categorical case. We give a description of the automorphism group of the Borel randomization in terms of the group of the original structure. In the $\aleph_0$-categorical context, this provides a new source of Roelcke precompact Polish groups, and we describe the associated Roelcke compactifications. This allows us also to recover and generalize preservation results of stable and NIP formulas previously established in the literature, via a Banach-theoretic translation. Finally, we study and classify the separable models of the theory of beautiful pairs of randomizations, showing in particular that this theory is never $\aleph_0$-categorical (except in basic cases).\end{abstract}
\maketitle
\tableofcontents

\section*{Introduction}

A randomization of a structure $M$ is a metric structure whose elements are random variables taking values in $M$, and whose predicates account for the expected values of the original predicates of $M$. The idea goes back to Keisler \cite{kei99}, where it was developed in a classical first-order framework. Later, in \cite{benkei}, Keisler and Ben Yaacov adapted the construction so that randomizations could be regarded as \emph{metric} structures (that is, in the sense of continuous first-order logic), and with this approach they proved several preservation results, supporting the claim that this is the correct frame to develop the idea.

The construction was further adapted by Ben Yaacov in \cite{benOntheories}, so that randomizations of metric structures could also be considered. In \cite{benVapChe}, another important and difficult preservation result was proved, concerning NIP formulas. Further model-theoretic analysis of randomized structures have been carried out in \cite{andgolkeiDef,andkeiSep,andgolkeiInd}.

In the present work, we approach the subject from the viewpoint of descriptive set theory. Our main motivation is the study of the symmetries of randomized structures. More precisely, we describe and study the automorphism group of the \emph{Borel randomization} of a separable metric structure. This is the most basic example of a randomization, yet encompassing most of the intuition of the subject. If $M$ is a separable structure with automorphism group~$G$, then the automorphism group of its Borel randomization $M^R$ is a \emph{measurable wreath product}, $$G\wr\Omega\coloneqq L^0(\Omega,G)\rtimes\Aut(\Omega),$$ where $\Omega$ denotes a standard probability space. For instance, the symmetry group of a randomized countable set, $\mbN^R$, is the semidirect product of the random permutations of $\mbN$ and the measure-preserving transformations of $\Omega$.

The group $G\wr\Omega$, induced by a given Polish group $G$, is an interesting object in itself. It had already been considered by Kechris in \cite[\textsection 19ff.]{kechrisGlobal}. Here, we investigate in detail the continuous actions of $G\wr\Omega$ induced by actions of~$G$. First, in Section~\ref{s:actions on metric}, we study the isometric actions of $G\wr\Omega$ of this kind. We show that every approximately oligomorphic faithful action of $G$ induces an approximately oligomorphic faithful action of $G\wr\Omega$. In particular, if $G$ is \emph{Roelcke precompact}, then so is $G\wr\Omega$. Afterwards, we draw a connection with the automorphism groups of randomized structures, as explained above.

Later, in Section~\ref{s:rando of ambits}, we investigate some compact $G\wr\Omega$-flows. This corresponds to the study of type spaces in randomized structures. When $G$ is Roelcke precompact, we give an explicit description of the Roelcke compactification of $G\wr\Omega$, denoted by $R(G\wr\Omega)$, in terms of $R(G)$. Furthermore, we show how some properties $R(G)$ might have (existence of a compatible semigroup law, representability by contractions on Hilbert spaces) pass on to $R(G\wr\Omega)$. We also prove a general preservation result concerning Banach representations of randomized type spaces.

If $M$ is the separable model of an $\aleph_0$-categorical theory $T$, then most of the model-theoretic information of $T$ is coded by dynamical properties of $G=\Aut(M)$, as per the recent works \cite{bentsa,iba14,bit15}. On the other hand, the randomized theory $T^R$ is also $\aleph_0$-categorical; hence, in this case, the Borel randomization encompasses all the model-theoretic information of $T^R$, and this can be recovered from the group $G\wr\Omega$. The preservation results of Section~\ref{s:rando of ambits} get then a precise model-theoretic meaning, and allow us to give new proofs (in the $\aleph_0$-categorical setting) of the theorems of preservation of stability and NIP from \cite{benkei,benOntheories} and \cite{benVapChe}.

Finally, in Section~\ref{s:beautiful}, we come back to more model-theoretic concerns, and we study the theory $(T^R)_P$ of beautiful pairs of models of a randomized theory $T^R$. This is motivated by the results of \cite{bbhSFB14} on the problem of generalizing the notion of one-basedness to the metric setting. When $T$ is $\aleph_0$-categorical, we classify the separable models of $(T^R)_P$, and show in particular that $(T^R)_P$ is never $\aleph_0$-categorical (except when $T$ is the theory of a compact structure). We also extend to the metric setting the result of preservation of $\aleph_0$-stability from \cite{benkei}. We end with a description of the automorphism groups of some canonical models of $(T^R)_P$.

\medskip \noindent \textbf{Acknowledgements.}
I am grateful to Ita\"i Ben Yaacov, who posed the question that was at the origin of this work. I also thank him and Julien Melleray for valuable discussions.

Part of the present work was realized during a research visit to the logic group of the Universidad de los Andes in 2014, supported by the ECOS Nord exchange programme. I would like to thank the Universidad de los Andes for this enriching stay, and Alexander Berenstein for very stimulating conversations.

This research was partially supported by the Agence Nationale de la Recherche project GruPoLoCo (ANR-11-JS01-0008).

\noindent\hrulefill

\section{Preliminaries}

\subsection{Notation}

Throughout the paper we fix an atomless Lebesgue space $\Omega$, say the unit interval $[0,1]$ with the Lebesgue measure $\mu$.

If $(X,d)$ is a Polish, bounded, metric space, then $$X^\Omega\coloneqq L^0(\Omega,X)$$ will denote the space of random variables $r\colon\Omega\to X$ (where $X$ carries the Borel $\sigma$-algebra), up to equality almost everywhere, endowed with the induced $L^1$-distance, $$d^\Omega(r,s)\coloneqq\int d(r(\omega),s(\omega))d\mu(\omega).$$ If $X$ is just a topological Polish space, then we may choose any compatible, bounded distance $d$ and define $X^\Omega$ to be the space $L^0(\Omega,X)$ with the topology induced by $d^\Omega$. This is independent of the choice of $d$: a sequence $r_n$ converges to $r$ in $X^\Omega$ if and only if every subsequence of $r_n$ has a further subsequence that converges almost surely to $r$. In both cases, as a metric or a topological space, $X^\Omega$ is Polish. See, for instance, \cite[Annexe~C]{lemaitreThese} or \cite[\textsection 19]{kechrisGlobal}.

An important particular case is the metric space $X=[0,1]$. Throughout the paper, we denote $$\mbA\coloneqq [0,1]^\Omega=L^0(\Omega,[0,1]).$$ Thus, the metric on $\mbA$ is given by $\mbE|R-S|$, where $\mbE\colon\mbA\to [0,1]$ is the expectation function $\mbE R=\int R(\omega)d\mu(\omega)$.

In the previous constructions, $\Omega$ might be replaced, later, by the unit square $\Omega^2$. We remark that $X^{\Omega_0\times\Omega_1}$ is naturally isomorphic to $(X^{\Omega_1})^{\Omega_0}$ via the natural map $r\mapsto\tilde{r}$, $\tilde{r}(\omega_0)(\omega_1)=r(\omega_0,\omega_1)$. For a proof, see \cite[418R]{fremVol4}.

Given a compact metrizable space $K$, we will denote by $\Reg(K)$ the compact space of Borel probability measures on $K$. The topology is the weak$^*$ topology as a subset of the dual space of continuous functions on $K$.

\subsection{A measurable wreath product}

We denote by $\Aut(\Omega)$ the group of invertible mea\-sure-preserving transformations of $\Omega$, up to equality almost everywhere. If $X$ is a Polish space, then $\Aut(\Omega)$ acts on $X^\Omega$ by the formula $$(tr)(\omega)=r(t^{-1}(\omega)),$$ where $t\in\Aut(\Omega)$ and $r\in X^\Omega$. If $(X,d)$ is metric, then this action is by isometries on $(X^\Omega,d^\Omega)$. In particular, $\Aut(\Omega)$ can be seen as a subgroup of the isometry group of $\mbA$, which is a Polish group under the topology of pointwise convergence. With the induced topology, $\Aut(\Omega)$ is a Polish group. In addition, the action $\Aut(\Omega)\actson X^\Omega$ is continuous.

Given a Polish group $G$, the space $G^\Omega$ is also a Polish group with the operation of pointwise multiplication. Hence we have an action of $\Aut(\Omega)$ on the group $G^\Omega$. In this case, given $g\in G^\Omega$ and $t\in\Aut(\Omega)$, we will denote the action of $t$ on $g$ by $t\cdot g$. (When $\Aut(\Omega)$ and $G^\Omega$ act simultaneously on a space $Z$, the term $tg$ will denote their product as homeomorphisms of $Z$.)

We introduce the following definition.

\begin{defin}
The \emph{measurable wreath product} of $G$ and $\Aut(\Omega)$ is the semidirect product $$G\wr\Omega\coloneqq G^\Omega\rtimes\Aut(\Omega),$$ which is a Polish group endowed with the product topology.
\end{defin}

If $G$ acts continuously on a Polish space $X$, then $G^\Omega$ acts continuously on $X^\Omega$, by the formula $$(gr)(\omega)=g(\omega)(r(\omega)),$$ where $g\in G^\Omega$ and $r\in X^\Omega$. Note that $gr$ is indeed a random variable: if $U\subset X$ is open and $\{U_i\}_{i\in\mbN}$ is a countable base for the topology of $X$, then $$(gr)^{-1}(U)=\bigcup_{i\in\mbN}\big(r^{-1}(U_i)\cap g^{-1}(\{h\in G:h(U_i)\subset U\})\big),$$ which is a measurable set since $\{h\in G:h(U_i)\subset U\}=\{h\in G:h^{-1}(U^c)\subset U_i^c\}$ is a closed subset of $G$. That the action $G^\Omega\actson X^\Omega$ is continuous can be deduced from the fact that $g_nr_n$ converges almost surely to $gr$ if $g_n$ and $r_n$ converge almost surely to $g$ and $r$. If moreover the action $G\actson (X,d)$ is by isometries, then so is $G^\Omega\actson (X^\Omega,d^\Omega)$.

Now, given a continuous action $G\actson X$, we have, simultaneously, continuous actions of $\Aut(\Omega)$ and $G^\Omega$ on $X^\Omega$. These induce a continuous action $G\wr\Omega\actson X^\Omega$. Indeed, inside the group of homeomorphisms of $X^\Omega$, the elements $t\in\Aut(\Omega)$, $g\in G^\Omega$ satisfy the relation $t\cdot g=tgt^{-1}$.

We say that an action by isometries $G\actson (X,d)$ is \emph{faithful} if the corresponding group homomorphism $G\to\Iso(X)$ is a topological embedding. Here, $\Iso(X)$ is the isometry group of $X$ with the topology of pointwise convergence.

\begin{lem}\label{faithful actions} Let $G$ be a Polish group acting continuously by isometries on a Polish metric space $(X,d)$ with at least two elements. If the action is faithful, then so is the induced action $G\wr\Omega\actson (X^\Omega,d^\Omega)$.\end{lem}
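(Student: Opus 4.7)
The plan is to show that the continuous homomorphism $\phi\colon G\wr\Omega\to\Iso(X^\Omega)$ induced by the action is a topological embedding. The guiding idea is that the two factors of $G\wr\Omega$ can be probed separately on $X^\Omega$: the $G^\Omega$-coordinate is read off by testing on \emph{constant} random variables, while $\Aut(\Omega)$ is recovered, once the $G^\Omega$-part has been handled, via the two-valued step variables $r_A\coloneqq x_0\cdot 1_A + x_1\cdot 1_{A^c}$ indexed by measurable $A\subset\Omega$, where $x_0\neq x_1$ are any two chosen points of $X$.

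For injectivity, I would suppose $(g,t)$ acts trivially on $X^\Omega$. Testing on constants $r\equiv x$ yields $g(\omega)\cdot x = x$ almost everywhere in $\omega$. Picking a countable dense sequence $\{x_n\}\subset X$ and intersecting countably many conull sets, one finds a conull $\Omega_0$ on which every $g(\omega)$ fixes every $x_n$; by continuity of $G\actson X$ and faithfulness, $g(\omega)=e$ on $\Omega_0$, hence $g=e$ in $G^\Omega$. The resulting triviality of $(e,t)$ reads as $r\circ t^{-1}=r$ for all $r$, and plugging in the variables $r_A$ forces $t^{-1}A=A$ modulo null sets for every measurable $A$, so that $t$ is the identity of $\Aut(\Omega)$.

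For the topological embedding property, I would start from $\phi(g_n,t_n)\to\phi(g,t)$ in $\Iso(X^\Omega)$, i.e., $(g_n,t_n)\cdot r\to(g,t)\cdot r$ in $X^\Omega$ for every $r$. Testing on constants $r\equiv x$ eliminates the contribution of $t_n,t$ and yields $\int d(g_n(\omega)\cdot x,\,g(\omega)\cdot x)\,d\mu(\omega)\to 0$ for every $x\in X$; a countable dense sequence in $X$, combined with the faithfulness of $G\actson X$, upgrades this to $g_n\to g$ in $G^\Omega$. Since $G^\Omega$ acts continuously on $X^\Omega$, applying $g_n^{-1}\to g^{-1}$ to the identity $(g_n,t_n)\cdot r=g_n\cdot(t_n r)$ peels off the $g_n$'s and gives $t_n r\to tr$ in $X^\Omega$ for every $r$. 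Evaluating on the step variables $r_A$ then yields $\mu(t_nA\,\triangle\, tA)\to 0$ for every measurable $A$, hence $t_n\to t$ in $\Aut(\Omega)$. The main delicacy is this \emph{ordered} extraction: the $G^\Omega$-limit must be isolated first, via constants, before the $\Aut(\Omega)$-limit can be teased out with the step variables, and the continuity of the $G^\Omega$-action on $X^\Omega$ is what permits the cancellation of the $g_n$'s in the second stage.
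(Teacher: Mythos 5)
Your argument is correct and follows essentially the same route as the paper: isolate the $G^\Omega$-part by testing on constant random variables, then recover the $\Aut(\Omega)$-part afterwards. The only difference is cosmetic — you work with general limits $\phi(g_n,t_n)\to\phi(g,t)$ rather than reducing to convergence to the identity, and you spell out via the two-valued step variables $r_A$ the faithfulness of $\Aut(\Omega)\actson X^\Omega$, a detail the paper explicitly omits.
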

\begin{proof}
The action $\Aut(\Omega)\actson (X^\Omega,d^\Omega)$ is faithful since $X$ has at least two elements (we omit the details). Since the action $G\actson (X,d)$ is faithful, we see that a sequence in $G^\Omega$ converges to the identity if and only if every subsequence has a further subsequence $g_n$ such that, almost surely, $g_n(\omega)(x)$ converges to $x$ for every $x\in X$. Let $D\subset X$ be a countable dense subset and let $C\subset X^\Omega$ be the family of constant random variables taking a value from $D$. It follows that $g_n\to 1$ in $G^\Omega$ if and only if $d^\Omega(g_nc,c)\to 0$ for every $c\in C$. In particular, the action $G^\Omega\actson X^\Omega$ is faithful.

To see that the action $G\wr\Omega\actson X^\Omega$ is faithful we must check that, whenever $g_nt_n\to 1$ in $\Iso(X^\Omega)$ for $g_n\in G^\Omega$ and $t_n\in\Aut(\Omega)$, we have $g_n\to 1$ and $t_n\to 1$. Now, if $d^\Omega(g_nt_nr,r)\to 0$ for every random variable $r$, specializing on the constants $c\in C$ (which are fixed under $\Aut(\Omega)$) we see that $g_n\to 1$. Then also $t_n\to 1$.
\end{proof}

\subsection{Semigroup completions}

Let $G$ be a Polish group, and let $d_L$ be a left-invariant compatible metric on $G$, which exists by the Birkhoff--Kakutani theorem (see \cite[p.~28]{berberian}). The completion of $G$ with respect to $d_L$ will be denoted by $\widehat{G}_L$. Then, the group law on $G$ extends to a jointly continuous semigroup operation on $\widehat{G}_L$. As a topological semigroup, $\widehat{G}_L$ does not depend on the particular choice of $d_L$ (it is the completion of $G$ with respect to its \emph{left uniformity}). Similarly, the completion of $G$ with respect to a compatible right-invariant metric $d_R$ will be denoted by $\widehat{G}_R$, and is also a topological semigroup. The inverse operation on $G$ extends to a homeomorphic anti-isomorphism $^*\colon\widehat{G}_R\to\widehat{G}_L$.

The right completion of the group $\Aut(\Omega)$ is the semigroup $\End(\Omega)$ of measure-pre\-serv\-ing transformations of $\Omega$, up to equality almost everywhere (we will revisit this fact later). Then, given any Polish metric space $(X,d)$, the left completion $\End(\Omega)^*$ acts by isometries on $(X^\Omega,d^\Omega)$ by the formula $(s^*r)(\omega)=r(s(\omega))$, where $s\in\End(\Omega)$, $r\in X^\Omega$.

\begin{lem}\label{left completion of GwrOmega}
Let $G$ be a Polish group. Then, the left completion of $G\wr\Omega$ is the topological semigroup $(\widehat{G}_L)^\Omega\rtimes\End(\Omega)^*$.
\end{lem}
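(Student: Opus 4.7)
The plan is to exhibit a convenient left-invariant compatible metric on $G\wr\Omega$ whose metric completion can be computed factor by factor. Fix a left-invariant compatible metric $d_L\le 1$ on $G$ and a left-invariant compatible metric $d_2\le 1$ on $\Aut(\Omega)$. On $G\wr\Omega=G^\Omega\rtimes\Aut(\Omega)$, I consider
\[
d\bigl((g,t),(g',t')\bigr)\coloneqq d_L^\Omega(g,g')+d_2(t,t'),
\]
which is compatible with the product topology. Left-invariance reduces to three checks: pointwise multiplication on $G^\Omega$ preserves $d_L^\Omega$ (because $d_L$ is left-invariant), left translation in $\Aut(\Omega)$ preserves $d_2$ by choice, and $d_L^\Omega(s\cdot g,s\cdot g')=d_L^\Omega(g,g')$ for every $s\in\Aut(\Omega)$, which follows from the change-of-variables formula together with the fact that $s$ is measure-preserving.

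Since $d$ is a sum, its completion splits as a product of completions. The completion of $(\Aut(\Omega),d_2)$ is $\End(\Omega)^*$, as recalled just before the lemma. For the first factor, I claim that the $d_L^\Omega$-completion of $G^\Omega$ is $(\widehat{G}_L)^\Omega$: thinning a Cauchy sequence to obtain $\sum_n d_L^\Omega(g_n,g_{n+1})<\infty$ makes $g_n(\omega)$ Cauchy in $(G,d_L)$ for a.e.\ $\omega$, and dominated convergence gives convergence in $d_L^\Omega$ to a measurable limit $g\in L^0(\Omega,\widehat{G}_L)$; for density, any such $g$ is $d_L^\Omega$-approximable by $G$-valued random variables via a measurable selection from $G\subset\widehat{G}_L$. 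This identification is the main obstacle of the proof, but is routine from this Borel--Cantelli-style argument plus the selection.

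It remains to transfer the group operations. The multiplication $(g,t)(g',t')=(g(t\cdot g'),tt')$ decomposes into three pieces, each of which extends continuously to the completion. The product $tt'$ extends to the semigroup law of $\End(\Omega)^*$, and pointwise multiplication extends to a jointly continuous product on $(\widehat{G}_L)^\Omega$ because $\widehat{G}_L$ is a topological semigroup. For the action term, the formula $(s^*\cdot g')(\omega)\coloneqq g'(s(\omega))$, mirroring the action of $\End(\Omega)^*$ on $X^\Omega$ recalled before the lemma applied to $X=\widehat{G}_L$, defines a jointly continuous map $\End(\Omega)^*\times(\widehat{G}_L)^\Omega\to(\widehat{G}_L)^\Omega$: it is an isometry in the second variable for fixed first, and continuous in the first for fixed second by the same argument that makes $\End(\Omega)^*\actson X^\Omega$ continuous. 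Assembling these three extensions gives a jointly continuous semigroup operation on $(\widehat{G}_L)^\Omega\rtimes\End(\Omega)^*$ that agrees with the original one on the dense sub-semigroup $G\wr\Omega$, so by uniqueness of continuous extensions this is $\widehat{G\wr\Omega}_L$.
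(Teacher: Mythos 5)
Your proof is correct and follows essentially the same route as the paper's: both work with the sum metric $d_L^\Omega+d_2$ (whose left-invariance rests on the same two facts, left-invariance of $d_L$ and measure-preservation of the $\Aut(\Omega)$-action), identify the completions of the two factors as $(\widehat{G}_L)^\Omega$ and $\End(\Omega)^*$, and conclude by density. You supply a few details the paper leaves implicit (the hand-proved completeness of $(\widehat{G}_L)^\Omega$, for which the paper cites Kechris, and the explicit continuous extension of the three pieces of the semidirect-product multiplication), but the underlying argument is the same.
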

\begin{proof}
Fix a compatible left-invariant metric $d_L$ on the left completion $\widehat{G}_L$, and consider the induced metric $d_L^\Omega$ on $(\widehat{G}_L)^\Omega$. Since $(\widehat{G}_L,d_L)$ is complete, so is $((\widehat{G}_L)^\Omega,d_L^\Omega)$ (see \cite{kechrisGlobal}, Proposition~19.6). Let $d_\Omega$ be a compatible left-invariant metric on $\End(\Omega)^*$. Then, the metric $d=d_L^\Omega+d_\Omega$ is a complete, left-invariant metric on $(\widehat{G}_L)^\Omega\rtimes\End(\Omega)^*$, compatible with the topology of $G\wr\Omega$. Since $G\wr\Omega$ is moreover dense in $(\widehat{G}_L)^\Omega\rtimes\End(\Omega)^*$, this must be its left completion.
\end{proof}

\subsection{Borel randomizations}

Given a structure or a class of models of a certain first-order theory, there are several ways of producing randomizations from them. For the most part of this paper, however, we will only be interested in one particular construction, which can be considered the basic, canonical example of a randomization. In the $\aleph_0$-categorical setting, which is of particular interest to us, this is actually the one and only example one needs to consider, since randomizations preserve separable categoricity.

Unless otherwise stated, all structures and theories we consider are in the sense of first-order \emph{continuous} logic, as per \cite{benusv10,bbhu08}. (Traditional discrete structures and theories, which form a particular case, are refer to as \emph{classical}.) In particular, structures are complete metric spaces. Furthermore, we assume all our structures to be separable in a countable language.

Let $M$ be a metric structure with at least two elements, in a language $L$, which we shall assume to be one-sorted for simplicity. We introduce below the \emph{Borel randomization of $M$}, denoted in this paper by $M^R$, which is a structure in a two-sorted language $L^R$. We have borrowed the name from \cite{andkeiSep}, Definition~2.1, although, there, the term refers to the natural pre-structure whose completion gives $M^R$; also, they only define it for classical $M$.

The \emph{main sort} of $M^R$ is the metric space $(M^\Omega,d^\Omega)$, and the \emph{auxiliary sort} of $M^R$ is the space $\mbA$ with its natural metric. For each definable predicate $\varphi\colon M^n\to [0,1]$ there is a definable function $$\llbracket \varphi(x)\rrbracket\colon (M^\Omega)^n\to\mbA,$$ given by $$\llbracket\varphi(r)\rrbracket(\omega)=\varphi(r(\omega)),$$ where $r\in (M^\Omega)^n\simeq (M^n)^\Omega$. In addition, the auxiliary sort $\mbA$ is equipped with a predicate for the expectation function $$\mbE\colon\mbA\to [0,1],$$ and with definable functions for the basic arithmetic operations between random variables, which in fact permit to define all continuous pointwise-defined functions $\mbA^n\to\mbA$ (\cite{benOntheories}, Lemma~2.13). Thus, as a reduct, $\mbA$ is a model of the \emph{theory of $[0,1]$-valued random variables}, in the sense of \cite[\textsection 2]{benOntheories}.

For a syntactical presentation and an explicit description of the language $L^R$, see \cite[\textsection 3]{benOntheories} or, in the classical setting, \cite[\textsection 2]{benkei}. Let $T$ denote the first-order theory of $M$ in the language $L$. Then, the \emph{randomization theory} $T^R$ is the theory of $M^R$ in the language~$L^R$.

\begin{prop}
The theory $T^R$ has quantifier elimination. More precisely, if $x$ and $y$ are tuples from the main and the auxiliary sort, respectively, then the $T^R$-type of $xy$ is determined by the values $\mbE\tau(x,y)$ where $\tau(x,y)\in\mbA$ is a term on $xy$, that is, the result of applying any operations of the auxiliary sort to any random variables from $y$ and any random variables of the form $\llbracket\varphi(x)\rrbracket$ for an $L$-formula $\varphi$.
\end{prop}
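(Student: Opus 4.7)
The plan is to use the standard continuous-logic criterion for quantifier elimination: it suffices to show that any two tuples $(x,y),(x',y')$ in sufficiently saturated models of $T^R$ have the same type whenever $\mathbb{E}\tau(x,y)=\mathbb{E}\tau(x',y')$ for every term $\tau$ of the stated form. One direction is trivial; for the converse, I would translate the equality of expectations into a coincidence of pushforward measures on a compact auxiliary space, and then realize this by an elementary map constructed via measure-theoretic isomorphism.

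The first step is a Stone--Weierstrass argument on the compact space $S_n(T)\times[0,1]^m$ (where $n,m$ are the arities of $x,y$). The algebra of continuous functions generated by the $L$-formulas $\varphi\in C(S_n(T))$ and the coordinate projections on $[0,1]^m$, together with arithmetic operations, is uniformly dense. By construction, the family of terms $\tau$ realizes exactly this algebra when interpreted pointwise as maps $S_n(T)\times[0,1]^m\to[0,1]$, since it is closed under the arithmetic operations of the auxiliary sort and contains every $\llbracket\varphi(x)\rrbracket$. Matching the scalars $\mathbb{E}\tau$ therefore forces equality of the pushforward measures of the maps $\omega\mapsto(\tp(x(\omega)),y(\omega))$ and $\omega\mapsto(\tp(x'(\omega)),y'(\omega))$ on $S_n(T)\times[0,1]^m$.

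Second, invoking the standard isomorphism theorem for standard probability spaces, I would produce a measure-preserving isomorphism of the sub-$\sigma$-algebras of $\Omega,\Omega'$ generated by $(\tp(x),y)$ and $(\tp(x'),y')$ that transports one to the other, then extend to the full probability algebras. Passing to a Borel randomization $M^R$ for a sufficiently saturated $M$, I would lift this isomorphism to an $L^R$-isomorphism carrying $(x,y)$ to $(x',y')$, yielding the desired equality of types.

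The main obstacle is the lifting step: $x(\omega)$ retains strictly more information than $\tp(x(\omega))$, because two elements of $M$ can realize the same $L$-type, so a $\sigma$-algebra isomorphism matching the $S_n(T)$-valued data need not automatically match the $M$-valued data. This is resolved by working in Borel randomizations of sufficiently saturated $M$, where each type is realized densely; a measurable selection via the Kuratowski--Ryll-Nardzewski theorem, applied fibrewise to the Polish space $M$, then upgrades the measure-theoretic isomorphism to one carrying the random variables $x$ and $x'$ pointwise to each other almost surely, completing the construction.
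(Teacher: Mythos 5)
The paper itself does not prove this proposition; it cites \cite[\S 3.5]{benOntheories}, so your attempt can only be judged on its own terms. Your guiding picture is the right one: the quantifier-free type of $(x,y)$ should be the pushforward measure on $S_n(T)\times[0,1]^m$, the terms $\tau$ realize (via Stone--Weierstrass) a dense subalgebra of $C(S_n(T)\times[0,1]^m)$, and equality of the scalars $\mbE\tau$ is therefore equality of these measures. But there is a genuine gap in the reduction. Quantifier elimination is a statement about \emph{all} types in $S_{n,m}(T^R)$, equivalently about tuples in arbitrary sufficiently saturated models of $T^R$ --- which is how you correctly open --- yet by the end you have quietly replaced ``sufficiently saturated model of $T^R$'' by ``Borel randomization of a sufficiently saturated $M$''. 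These are not the same object: an abstract elementary extension of $M^R$ comes with no sample space, no points $\omega$, and hence no pushforward measure, so your steps two through four simply do not apply to it. Nor can one dodge this by density: $M^{[0,1]}$ is never $\omega$-saturated (already $\mbA$ fails to realize, over a single generator of the measure algebra, the type of an independent event), the set of types realized in Borel randomizations is dense but not obviously closed in $S_{n,m}(T^R)$, and injectivity of the quantifier-free restriction map on a dense set does not give injectivity on its closure. Closing this gap is precisely the content of the representation theorem that Ben Yaacov proves before QE (every model of the randomization axioms is a random-variable structure over some probability algebra); alternatives would be a quantitative back-and-forth (close measures yield close types) or a syntactic elimination using fullness to commute $\inf_x$ with $\mbE\llbracket\cdot\rrbracket$. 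None of these is addressed.

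Two further problems occur inside the construction you do carry out. First, the measure-algebra isomorphism between the sub-$\sigma$-algebras generated by $(\tp\circ x,y)$ and $(\tp\circ x',y')$ need not extend to the full algebras: one side may generate the whole Borel $\sigma$-algebra of $\Omega$ while the other generates a proper subalgebra, and then no automorphism of $\MALG$ restricts to the given isomorphism. You must first fatten the base (e.g.\ pass to $\Omega\times\Omega$ and use an elementary embedding $M^\Omega\preceq M^{\Omega\times\Omega}$) so that the relative complements become atomless. Second, your selection step asks $M$ to be simultaneously saturated and strongly homogeneous (to conjugate equi-typed elements fibrewise) and Polish (for the Kuratowski--Ryll-Nardzewski theorem); a saturated model of a general $T$ is non-separable, so the fibrewise measurable selection as stated is not available and the argument would have to be reorganized, for instance by selecting realizations of types rather than automorphisms.
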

\begin{proof}
See \cite[\textsection 3.5]{benOntheories}.
\end{proof}

One could present the Borel randomization $M^R$ as a structure in the sort $M^\Omega$ alone, by considering as predicates the functions $\mbE\llbracket\varphi(x)\rrbracket$. However, $\mbA$ would then be present as an imaginary sort (and a very important one, which justifies to make it a sort in its own right). Remark, in this respect, that the structure MALG$_\mu$, the measure algebra of~$\Omega$ (which can be thought of as $2^\Omega$), is bi-interpretable with~$\mbA$ (see \cite[\textsection 2]{benOntheories}).

\subsection{Bochner spaces}\label{ss:Bochner} Let $V$ be a Banach space, which we will assume to be separable. The \emph{Bochner space} $L^2(\Omega,V)$ is the space of measurable functions $f\colon\Omega\to V$, modulo equality almost everywhere, for which the norm $$\|f\|_2\coloneqq\big(\int\|f(\omega)\|^2_Vd\omega\big)^{1/2}$$ is finite. Equipped with this norm, $L^2(\Omega,V)$ is a Banach space.

If $G$ is a Polish group and $G\actson V$ is a continuous action by isometries, then the action $G^\Omega\actson V^\Omega$ (defined topologically, since the norm metric on $V$ is not bounded) restricts to an isometric action on $L^2(\Omega,V)$. Similarly for the action of $\Aut(\Omega)$. Thus, we obtain an isometric continuous action $G\wr\Omega\actson L^2(\Omega,V)$.

We recall a description of the dual space $L^2(\Omega,V)^*$. This can be identified with the space $L^2_{w^*}(\Omega,V^*)$ of weak$^*$-measurable functions $\psi\colon\Omega\to V^*$, modulo equality almost everywhere, for which there exists $h\in L^2(\Omega)$ with $\|\psi(\omega)\|_{V^*}\leq h(\omega)$ for almost every $\omega$. There is a natural linear map $L^2_{w^*}(\Omega,V^*)\to L^2(\Omega,V)^*$, defined implicitly by the relation $$\langle f,\psi\rangle=\int\langle f(\omega),\psi(\omega)\rangle d\omega$$ for $f\in L^2(\Omega,V)$ and $\psi\in L^2_{w^*}(\Omega,V^*)$. This is in fact a bijection; see \cite[\textsection 1.5]{cemmenBook}. We endow the space $L^2_{w^*}(\Omega,V^*)$ with the weak$^*$ topology as the dual of $L^2(\Omega,V)$.

Every isometric continuous action $G\actson V$ induces a dual action $G\actson V^*$, given by $\langle v,g\psi\rangle=\langle g^{-1}v,\psi\rangle$ for $v\in V$ and $\psi\in V^*$. This action is continuous for the weak$^*$ topology on $V^*$. In particular, we have a continuous action $G\wr\Omega\actson L^2_{w^*}(\Omega,V^*)$, which satisfies the relation $(gt\psi)(\omega)=g(\omega)(\psi(t^{-1}(\omega)))$ for $g\in G^\Omega$, $t\in\Aut(\Omega)$ and $\psi\in L^2_{w^*}(\Omega,V^*)$.

\noindent\hrulefill

\section{Actions on randomized metric spaces}\label{s:actions on metric}

\subsection{Quotients of isometric actions}

If $G\actson (X,d)$ is an action by isometries, we define the \emph{metric quotient} of $X$ by $G$ as the space of orbit closures $$X\sslash G\coloneqq\{\overline{Gx}:x\in X\}$$ endowed with the distance $d(\overline{Gx},\overline{Gy})=\inf_{g\in G}d(x,gy)$. If $(X,d)$ is a Polish metric space, then so is $(X\sslash G,d)$. The action $G\actson (X,d)$ is \emph{approximately oligomorphic} if the metric quotient $X^n\sslash G$ of the diagonal action $G\actson (X^n,d)$ is compact for all $n\in\mbN$. Equivalently, if the metric quotient $X^\mbN\sslash G$ is compact. Here, the metric $d$ on $X^n$ or $X^\mbN$ is any compatible distance for which the diagonal action of $G$ is isometric.

A Polish group $G$ is \emph{Roelcke precompact} if it admits a faithful approximately oligomorphic action on a Polish metric space $(X,d)$ (this is actually an equivalent property, we will recall the original definition in \textsection \ref{ss:Markov}). For example, the group $\Aut(\Omega)$ is Roelcke precompact and the action $\Aut(\Omega)\actson\mbA$ is approximately oligomorphic. See \cite{bentsa,benOntheories}.

Let $\pi\colon X\to Y$ be a surjective map between topological spaces. A \emph{Borel selector} for $\pi$ is a measurable map $\sigma\colon Y\to X$ such that $\pi\sigma$ is the identity of $Y$.

\begin{lem}\label{Borel selector}\ 
\begin{enumerate}
\item\label{B.s. metric quotient}
Let $G\actson (X,d)$ be an action by isometries on a Polish metric space $X$. Then, the quotient map $\pi\colon X\to X\sslash G$ admits a Borel selector.
\item\label{B.s. compact spaces} Let $\pi\colon K_0\to K_1$ be a continuous surjective map between compact metrizable spaces. Then $\pi$ admits a Borel selector.
\end{enumerate}
\end{lem}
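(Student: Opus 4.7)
The strategy in both parts is to apply the Kuratowski--Ryll-Nardzewski measurable selection theorem: if $F$ is a closed-valued multifunction from a standard Borel space $Y$ into a Polish space $X$ such that $\{y \in Y : F(y) \cap U \neq \emptyset\}$ is Borel for every open $U \subseteq X$, then $F$ admits a Borel selector. In each part of the lemma the desired section of $\pi$ will be obtained as a selector of a natural closed-valued map associated with $\pi$, so the content reduces to verifying weak measurability.

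For (1), consider the set-valued map $F\colon X \sslash G \to 2^X$ given by $F([x]) \coloneqq \overline{Gx}$, whose values are closed in $X$ by construction. I would in fact establish the stronger statement that $\pi$ is an open map, which is where the isometric hypothesis enters. Given $[x] \in \pi(U)$ with $U \subseteq X$ open, pick $y \in \overline{Gx} \cap U$ and $\varepsilon > 0$ with $B(y,\varepsilon) \subseteq U$. Then for any $[z]$ at quotient distance less than $\varepsilon$ from $[x] = [y]$ one has $\inf_g d(y, gz) < \varepsilon$, so some translate $gz$ lies in $B(y,\varepsilon) \subseteq U$, giving $[z] \in \pi(U)$. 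Hence $\pi(U)$ is open in $X \sslash G$, and the selection theorem yields the selector.

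For (2), consider $F(y) \coloneqq \pi^{-1}(y) \subseteq K_0$; each fiber is closed (indeed compact), so this is a closed-valued multifunction. For an open $U \subseteq K_0$ the set $\{y : \pi^{-1}(y) \cap U \neq \emptyset\}$ is just $\pi(U)$, and since $K_0$ is compact and metrizable, any open subset $U$ is $\sigma$-compact; hence $\pi(U)$ is a countable union of compact subsets of $K_1$, in particular Borel. The selection theorem applies again.

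In both parts the only substantive step is the verification of weak measurability: for (1) it reduces to the openness of $\pi$ via the isometric action, and for (2) to the $\sigma$-compactness of open subsets of a compact metric space. No obstacle beyond these short checks is anticipated, and no deeper tools than the classical measurable selection theorem are required.
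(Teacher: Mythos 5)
Your proof is correct and takes essentially the same route as the paper: the paper also reduces both parts to a measurable selection theorem on the Effros Borel space (Kechris, Theorem~12.13, which is the Kuratowski--Ryll-Nardzewski selector in that guise), and in part (1) it verifies weak measurability by showing $\pi(U)$ is open via the isometric hypothesis exactly as you do. The only cosmetic difference is in part (2), where the paper tests measurability of the fiber map against closed sets (whose images are closed by compactness) while you test against open sets (whose images are $\sigma$-compact); both checks are immediate and equivalent.
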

\begin{proof}
(1). We adapt the proof of Theorem~12.16 from \cite{kechrisDST}. Let $F(X)$ be the \emph{Effros Borel space} of closed subsets of $X$ with the $\sigma$-algebra generated by the sets $[U]=\{F\in F(X):F\cap U\neq\emptyset\}$, where $U$ varies over the open subsets of $X$. As a set, the quotient $X\sslash G$ is contained in $F(X)$. Moreover, we have $[U]\cap (X\sslash G)=\pi(U)$, which is an open subset of $X\sslash G$ if $U$ is open in $X$. Indeed, let $u\in U$ and take $\epsilon>0$ such that $d(u,v)<\epsilon$ implies $v\in U$. Then, if $d(\pi(u),\pi(x))<\epsilon$, there is $g\in G$ such that $d(u,gx)<\epsilon$, whence $\pi(x)=\pi(gx)\in\pi(U)$. It follows that $A\cap (X\sslash G)$ is a Borel subset of $X\sslash G$ whenever $A$ is Borel in $F(X)$.

Now, by Theorem~12.13 in \cite{kechrisDST}, there is a measurable map $d\colon F(X)\to X$ such that $d(F)\in F$ for every non-empty closed set $F\subset X$. The restriction of $d$ to $X\sslash G$ is thus a Borel selector for $\pi$.

(2). As before, we know that there is a measurable map $d\colon F(K_0)\to K_0$ with $d(F)\in F$ for $F\neq\emptyset$, thus it suffices to show that the fiber map $\pi^{-1}\colon K_1\to F(K_0)$ is measurable. Then again, if $F\subset K_0$ is any subset, we have $(\pi^{-1})^{-1}([F])=\pi(F)$. Now, the $\sigma$-algebra of $F(K_0)$ is also generated by the sets $[F]$ where $F$ varies over the closed subsets of $K_0$. Since $K_0$ is compact and $\pi$ is continuous, $\pi(F)$ is closed if $F$ is closed, so the fiber map is measurable.
\end{proof}

Remark that if $G$ has a normal Polish subgroup $N$, then any isometric action $G\actson (X,d)$ induces an isometric action $G\actson (X\sslash N,d)$, by the formula $g\overline{Nx}=\overline{Ngx}$.

\begin{lem}\label{algebra of quotients} Let $G$ be a Polish group and let $G\actson (X,d)$ be an action by isometries on a Polish metric space. Suppose $G$ can be written as a product $G=NH$ for Polish subgroups $N,H<G$ with $N$ normal in $G$. Then we have the following natural isometric isomorphisms:
\begin{enumerate}
\item $X\sslash G\simeq (X\sslash N)\sslash H$.
\item $X^\Omega\sslash G^\Omega\simeq (X\sslash G)^\Omega$.
\item $X^\Omega\sslash (G\wr\Omega)\simeq (X\sslash G)^\Omega\sslash\Aut(\Omega)$.
\end{enumerate}
\end{lem}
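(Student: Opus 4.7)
My plan is to establish (1) and (2) independently and then deduce (3) by combining them; in each case the natural candidate for the isomorphism is essentially forced. For (1), I would define $\varphi\colon X\sslash G\to (X\sslash N)\sslash H$ by $\varphi(\overline{Gx})=\overline{H\,\overline{Nx}}$, which makes sense since, as remarked before the lemma, normality of $N$ in $G$ ensures that $H$ acts isometrically on $X\sslash N$. The bijectivity and isometric character of $\varphi$ follow directly from the computation $\inf_{g\in G}d(x,gy)=\inf_{h\in H}\inf_{n\in N}d(x,nhy)$, which uses only the factorization $G=NH$ and the identity $NH=HN$ coming from normality.

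For (2), I would study the map $\pi_*\colon X^\Omega\to (X\sslash G)^\Omega$ induced pointwise by the quotient $\pi\colon X\to X\sslash G$, namely $\pi_*(r)(\omega)=\pi(r(\omega))$. Since $\pi$ is $G$-invariant and $1$-Lipschitz, $\pi_*$ is $G^\Omega$-invariant and $1$-Lipschitz, hence factors through a $1$-Lipschitz map $\bar{\pi}_*\colon X^\Omega\sslash G^\Omega\to (X\sslash G)^\Omega$. Surjectivity of $\bar\pi_*$ is where Lemma~\ref{Borel selector}(1) enters: a Borel selector $\sigma\colon X\sslash G\to X$ lifts any $\bar{r}\in (X\sslash G)^\Omega$ to $\sigma\circ\bar{r}\in X^\Omega$. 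The crucial step, and the one I expect to be the main obstacle, is verifying that $\bar\pi_*$ is actually isometric, which reduces to the swap
$$\inf_{g\in G^\Omega}\int d(r(\omega),g(\omega)s(\omega))\,d\mu(\omega)=\int\inf_{g\in G}d(r(\omega),gs(\omega))\,d\mu(\omega)$$
for $r,s\in X^\Omega$. The nontrivial direction ($\leq$) asks for a measurable selection: for any $\epsilon>0$, the Borel subset of $\Omega\times G$ consisting of pairs $(\omega,g)$ with $d(r(\omega),gs(\omega))<d(\pi(r(\omega)),\pi(s(\omega)))+\epsilon$ has nonempty $\omega$-sections and thus admits a measurable uniformization by the Jankov--von Neumann theorem, producing the required element of $G^\Omega$. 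Injectivity of $\bar\pi_*$ then follows automatically from the isometric property.

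For (3), I would apply (1) inside the action $G\wr\Omega\actson X^\Omega$ with $N=G^\Omega$ (normal) and $H=\Aut(\Omega)$, getting the intermediate identification $X^\Omega\sslash(G\wr\Omega)\simeq (X^\Omega\sslash G^\Omega)\sslash\Aut(\Omega)$; substituting $X^\Omega\sslash G^\Omega\simeq (X\sslash G)^\Omega$ from (2) yields the claim, provided the isomorphism of (2) is $\Aut(\Omega)$-equivariant. That equivariance is immediate, since both sides carry the $\Aut(\Omega)$-action induced by precomposition on $\Omega$, and $\bar\pi_*$ is built entirely from the pointwise action of $\pi$, which commutes with it.
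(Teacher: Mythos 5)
Your proposal is correct and follows essentially the same route as the paper: the same map $\overline{Gx}\mapsto\overline{H\overline{Nx}}$ and computation for (1), the same pointwise map $\overline{G^\Omega r}\mapsto \pi\circ r$ with surjectivity via Lemma~\ref{Borel selector}(1) for (2), and the same deduction of (3) by combining (1) (applied to $G\wr\Omega=G^\Omega\rtimes\Aut(\Omega)$) with (2). The only point of divergence is the justification of the exchange $\inf_{g\in G^\Omega}\int d(r,gs)\,d\mu=\int\inf_{g\in G}d(r,gs)\,d\mu$: the paper obtains it by approximating $r,s$ by random variables of finite range (where one can pick $g$ constant on each piece), whereas you invoke a measurable uniformization of the Borel set $\{(\omega,g):d(r(\omega),gs(\omega))<d(\pi(r(\omega)),\pi(s(\omega)))+\epsilon\}$ via Jankov--von Neumann. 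Both work; the approximation argument is more elementary, while your selection argument is the one that generalizes when finite-range approximation is awkward (and is a tool the paper itself uses later, in Lemma~\ref{lem family of finite measures}) -- just note that Jankov--von Neumann yields only a universally measurable selector, which must be adjusted on a null set to land in $G^\Omega$. Your explicit check of $\Aut(\Omega)$-equivariance in step (3) is a point the paper leaves implicit, and it is correct.
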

\begin{proof}
(1).\ We verify that the map $\overline{Gx}\mapsto\overline{H\overline{Nx}}$ is isometric. Indeed, $\inf_{h\in H}d(\overline{Nx},h\overline{Ny})=\inf_{h\in H}d(\overline{Nx},\overline{Nhx})=\inf_{n\in N}\inf_{h\in H}d(x,nhx)=\inf_{g\in G}d(x,gx)$.

(2).\ The isomorphism is given by $\overline{G^\Omega r}\mapsto r'$, where, for $r\in X^\Omega$, we let $r'\in (X\sslash G)^\Omega$ be the random variable defined almost surely by $r'(\omega)=\overline{Gr(\omega)}$. Lemma~\ref{Borel selector}.(\ref{B.s. metric quotient}) ensures this map is surjective. We verify that it is isometric. Indeed, we have $$d^\Omega(r',s')=\mbE\inf_{g\in G}d(r,gs)=\inf_{g\in G^\Omega}d^\Omega(r,gs)=d^\Omega(\overline{G^\Omega r},\overline{G^\Omega s}),$$ where the second identity can be seen by approximating $r,s\in X^\Omega$ by random variables of finite range.

(3).\ Follows from (1) and (2).
\end{proof}

\begin{prop}\label{wreath on rando is app olig} Suppose $G$ is a Polish group acting approximately oligomorphically on a Polish metric space $(X,d)$. Then, the induced action $G\wr\Omega\actson (X^\Omega,d^\Omega)$ is also approximately oligomorphic.\end{prop}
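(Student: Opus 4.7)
The plan is to reduce the statement, via Lemma~\ref{algebra of quotients}, to a single fact about probability measures on compact spaces. Concretely, I would argue as follows.

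First, I would observe that the natural identification $(X^\Omega)^n\simeq (X^n)^\Omega$ is $G\wr\Omega$-equivariant when the left-hand side carries the diagonal action and the right-hand side carries the action induced from $G\actson X^n$. Indeed, for $(g,t)\in G^\Omega\rtimes\Aut(\Omega)$ and $r=(r_1,\dots,r_n)\in (X^\Omega)^n$, both sides give $\omega\mapsto g(\omega)(r_1(t^{-1}\omega),\dots,r_n(t^{-1}\omega))$. Applying part~(3) of Lemma~\ref{algebra of quotients} to the diagonal $G$-action on $X^n$ then yields an isometric identification
$$
(X^\Omega)^n\sslash (G\wr\Omega)\;\simeq\;(X^n\sslash G)^\Omega\sslash\Aut(\Omega).
$$
By hypothesis, $K_n\coloneqq X^n\sslash G$ is compact metric, so it suffices to prove the following auxiliary claim: for every compact metric space $K$, the quotient $K^\Omega\sslash\Aut(\Omega)$ is compact.

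To establish the claim, I would identify $K^\Omega\sslash\Aut(\Omega)$ with $\Reg(K)$ via the distribution (pushforward) map $r\mapsto r_*\mu$. Surjectivity is immediate. For injectivity, the key input is that on the atomless standard Lebesgue space $\Omega$, any two Borel random variables $r,s\colon\Omega\to K$ with the same distribution can be intertwined by an element of $\Aut(\Omega)$; this is the classical isomorphism theorem for atomless standard probability spaces and should be invoked here. Under this bijection, the quotient metric transforms into the Kantorovich--Wasserstein distance
$$
W_1(r_*\mu,s_*\mu)\;=\;\inf_{t\in\Aut(\Omega)}\int d\bigl(r(\omega),s(t^{-1}\omega)\bigr)\,d\mu(\omega),
$$
which on a compact metric space $K$ is known to metrize the weak$^*$ topology on $\Reg(K)$. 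Since $\Reg(K)$ is weak$^*$-compact, this finishes the claim, and hence the proposition.

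The main obstacle is the identification of orbits in $K^\Omega$ with distributions, that is, the statement that any two equidistributed $K$-valued random variables on $\Omega$ are conjugate under $\Aut(\Omega)$. Everything else is either invocation of Lemma~\ref{algebra of quotients}, a bookkeeping check of equivariance, or standard facts about Wasserstein distance on compact spaces; by contrast the orbit identification uses the full strength of $\Omega$ being an atomless Lebesgue space. If one wished to bypass this identification, an alternative route would be to prove compactness of $K^\Omega\sslash\Aut(\Omega)$ directly by a diagonal/tightness argument: given $(r_n)\subset K^\Omega$, extract a subsequence whose distributions converge in $\Reg(K)$ and then realize the limiting distribution by a concrete random variable using a measurable selector as in Lemma~\ref{Borel selector}, but in either form, this is the step that requires real work.
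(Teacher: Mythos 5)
Your reduction is exactly the paper's: the equivariant identification $(X^\Omega)^n\simeq (X^n)^\Omega$ plus Lemma~\ref{algebra of quotients}(3) brings everything down to showing that $K^\Omega\sslash\Aut(\Omega)$ is compact for $K=X^n\sslash G$ compact. Where you diverge is in how that last fact is proved. The paper sidesteps measures entirely: it writes $K$ as a continuous image of the Cantor set $2^{\mbN}$, lifts this (using the Borel selector of Lemma~\ref{Borel selector}(2)) to a continuous surjection $(2^\Omega)^{\mbN}\sslash\Aut(\Omega)\to K^\Omega\sslash\Aut(\Omega)$, and quotes the approximate oligomorphy of $\Aut(\Omega)\actson 2^\Omega\subset\mbA$. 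Your route instead identifies $K^\Omega\sslash\Aut(\Omega)$ isometrically with $(\Reg(K),W_1)$. Note that the paper does prove $K^\Omega\sslash\Aut(\Omega)\simeq\Reg(K)$ --- it is the very next lemma (Lemma~\ref{quotients of randomized compact sets}) --- but its proof there \emph{uses} the compactness you are trying to establish, to promote a continuous bijection to a homeomorphism; so you cannot invoke that lemma, and you genuinely need the metric identification with the Kantorovich--Wasserstein distance, which is the extra work your route requires. The payoff is a more informative statement (you get the explicit metric on the quotient for free, subsuming Lemma~\ref{quotients of randomized compact sets}); the paper's Cantor-set trick is shorter and needs only the $\aleph_0$-categoricity of the measure algebra.

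One claim you make is false as stated: two equidistributed random variables $r,s\colon\Omega\to K$ need \emph{not} be intertwined by an element of $\Aut(\Omega)$. Take $K=[0,1]$, $r=\mathrm{id}$ and $s(\omega)=2\omega \bmod 1$; both push $\mu$ to Lebesgue measure, but $r\circ t=s$ would force $t=s$ a.e., and $s$ is not invertible. What is true, and what the quotient by \emph{orbit closures} actually requires, is that $r$ and $s$ lie in each other's orbit closure; this follows from matching finite Borel partitions of $K$ via the homogeneity of $\mathrm{MALG}_\mu$ (as in the paper's proof of Lemma~\ref{quotients of randomized compact sets}), and in fact it is subsumed by the Monge--Kantorovich identity $\inf_{t\in\Aut(\Omega)}\int d(r,s\circ t^{-1})\,d\mu=W_1(r_*\mu,s_*\mu)$, whose nontrivial inequality is proved by the same partition-and-homogeneity argument. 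So replace the exact-conjugacy claim by this identity (which you state anyway) and the argument closes correctly.
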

\begin{proof}
Since $(X^\Omega)^n\simeq (X^n)^\Omega$, by the previous lemma we have $$(X^\Omega)^n\sslash (G\wr\Omega)\simeq K^\Omega\sslash\Aut(\Omega),$$ where the quotient $K=X^n\sslash G$ is assumed to be compact. As such, $K$ is a continuous image of the Cantor space, i.e., there exists a continuous surjective map $2^\mbN\to K$. This induces a natural continuous map $(2^\mbN)^\Omega\to K^\Omega$, which is surjective by Lemma~\ref{Borel selector}.(\ref{B.s. compact spaces}). Finally, this gives us a continuous surjective map $(2^\Omega)^\mbN\sslash\Aut(\Omega)\to K^\Omega\sslash\Aut(\Omega)$. Since the action $\Aut(\Omega)\actson 2^\Omega$ is approximately oligomorphic ($2^\Omega$ is a closed subset of $\mbA$), we deduce that $K^\Omega\sslash\Aut(\Omega)$ is compact, as we wanted. (In other words, $K^\Omega$ is an imaginary sort of the $\aleph_0$-categorical structure $\mbA$.)
\end{proof}

\begin{cor}\label{roelcke is preserved} If $G$ is a Polish Roelcke precompact group, then so is $G\wr\Omega$.\end{cor}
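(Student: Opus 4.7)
The plan is to combine the two main results of this section, namely Lemma~\ref{faithful actions} and Proposition~\ref{wreath on rando is app olig}, together with the characterization of Roelcke precompactness stated at the beginning of \textsection\ref{s:actions on metric}. Since $G$ is Roelcke precompact, it admits a faithful approximately oligomorphic action on some Polish metric space $(X,d)$. I would first arrange, without loss of generality, that $X$ has at least two elements (if $G$ is trivial the statement is immediate; otherwise any faithful action necessarily has $X$ non-trivial, and in any case one can always replace $X$ by $X\sqcup\{*\}$ with the induced action, which remains faithful and approximately oligomorphic).

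Next, I would apply Proposition~\ref{wreath on rando is app olig} to conclude that the induced isometric action $G\wr\Omega\actson (X^\Omega,d^\Omega)$ is approximately oligomorphic. Then, by Lemma~\ref{faithful actions}, this action is also faithful, since $X$ has at least two elements and the original action is faithful. Hence $G\wr\Omega$ admits a faithful approximately oligomorphic isometric action on the Polish metric space $(X^\Omega,d^\Omega)$, which, by the characterization recalled just before Lemma~\ref{Borel selector}, means precisely that $G\wr\Omega$ is Roelcke precompact.

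There is essentially no obstacle here; the corollary is a direct assembly of the previous lemma and proposition, and the only minor care needed is to ensure the hypothesis ``$X$ has at least two elements'' in Lemma~\ref{faithful actions}, which is handled by the trivial reduction above.
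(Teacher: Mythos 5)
Your proof is correct and is essentially the paper's own argument: the corollary is obtained by combining Proposition~\ref{wreath on rando is app olig} with Lemma~\ref{faithful actions}, exactly as you do. The only addition is your (correct) remark about ensuring $X$ has at least two elements, a detail the paper leaves implicit.
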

\begin{proof}
Follows from the previous proposition and Lemma~\ref{faithful actions}.
\end{proof}

It is true in general that the semidirect product of two Roelcke precompact groups is again Roelcke precompact (see \cite{tsaUnitary}, Proposition~2.2), but in our case $G^\Omega$ is not expected to be Roelcke precompact. For instance, if $G=2$ is the finite group with 2 elements, then $G^\Omega$ is non-compact and abelian, thus not Roelcke precompact. In fact, the group $G^\Omega$ alone may have rather unusual properties; see \cite{kailem}.

It will be useful to have an explicit description of the quotients $K^\Omega\sslash\Aut(\Omega)$ for compact~$K$: the orbit closures of compact-valued random variables should be seen as probability distributions.

\begin{lem}\label{quotients of randomized compact sets} Let $(K,d)$ be a compact metric space, and consider the induced action $\Aut(\Omega)\actson (K^\Omega,d^\Omega)$. Then, the quotient $K^\Omega\sslash\Aut(\Omega)$ is homeomorphic to the space $\Reg(K)$ of Borel probability measures on $K$.
\end{lem}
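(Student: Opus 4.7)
The natural candidate for the identification is the distribution map $\Phi\colon K^\Omega\to\Reg(K)$, $r\mapsto r_*\mu$, sending a random variable to the pushforward of $\mu$ along $r$. Since every $t\in\Aut(\Omega)$ preserves $\mu$, we have $(tr)_*\mu=r_*(t^{-1}_*\mu)=r_*\mu$, so $\Phi$ is $\Aut(\Omega)$-invariant. Moreover, convergence $r_n\to r$ in $K^\Omega$ means convergence in measure, which passes (along subsequences almost surely) to weak convergence of distributions $(r_n)_*\mu\to r_*\mu$. Thus $\Phi$ is continuous and descends to a continuous map $\bar\Phi\colon K^\Omega\sslash\Aut(\Omega)\to\Reg(K)$.

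The remainder of the proof shows $\bar\Phi$ is a continuous bijection between compact Hausdorff spaces. Surjectivity of $\Phi$ itself is standard descriptive set theory: any Borel probability $\nu$ on the Polish space $K$ is the pushforward of Lebesgue measure under some Borel map $r\colon\Omega\to K$, by the isomorphism theorem for standard probability spaces.

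The heart of the matter is injectivity of $\bar\Phi$: if $r_*\mu=s_*\mu=\nu$, then $r\in\overline{\Aut(\Omega)\cdot s}$. I would argue by approximation via simple random variables. Fix $\epsilon>0$ and pick a Borel partition $K=\bigsqcup_{i\leq n} B_i$ with each $\mathrm{diam}(B_i)<\epsilon$. Set $A_i=r^{-1}(B_i)$ and $A'_i=s^{-1}(B_i)$, which are Borel partitions of $\Omega$ with $\mu(A_i)=\mu(A'_i)=\nu(B_i)$. Because $\Omega$ is an atomless Lebesgue space, there exists $t\in\Aut(\Omega)$ with $t(A'_i)=A_i$ modulo null sets for every $i$. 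For almost every $\omega\in A_i$ we then have $t^{-1}(\omega)\in A'_i$, hence $s(t^{-1}\omega)$ and $r(\omega)$ both lie in $B_i$, so $d(s(t^{-1}\omega),r(\omega))<\epsilon$. Integrating gives $d^\Omega(ts,r)<\epsilon$, and letting $\epsilon\to 0$ yields the claim. The main obstacle is precisely this step, and it hinges on the classical fact that $\Aut(\Omega)$ acts transitively on ordered Borel partitions of $\Omega$ with prescribed masses.

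To finish, note that $\Reg(K)$ is weak$^*$-compact and metrizable, while $K^\Omega\sslash\Aut(\Omega)$ is compact by Proposition~\ref{wreath on rando is app olig} applied with $G$ the trivial group acting on $K$. A continuous bijection between compact Hausdorff spaces is a homeomorphism, so $\bar\Phi$ is the desired identification.
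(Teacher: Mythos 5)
Your proof is correct and follows essentially the same route as the paper: the pushforward map $r\mapsto r_*\mu$, injectivity via the homogeneity of the measure algebra of $\Omega$ (matching the preimage partitions of $r$ and $s$), surjectivity by realizing any $\nu\in\Reg(K)$ as a pushforward, and compactness of the quotient to upgrade the continuous bijection to a homeomorphism. Your injectivity step is in fact slightly more explicit than the paper's, since you carry out the $\epsilon$-refinement argument showing $\inf_t d^\Omega(r,ts)=0$ rather than leaving it to "duality".
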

\begin{proof}
Let $\pi\colon K^\Omega\to K^\Omega\sslash\Aut(\Omega)$ be the quotient map. Given $r\in K^\Omega$, the pushforward of the Lebesgue measure by $r$ is the measure $r_*\mu\in\Reg(K)$ defined by $\int_K fdr_*\mu=\int_\Omega frd\mu$. We consider the map $\theta\colon\pi(r)\mapsto r_*\mu$, which is clearly well-defined and continuous.

Suppose $r_*\mu=s_*\mu$. Then, given any finite algebra $B$ of Borel subsets of $K$, the preimages $r^{-1}(B)$ and $s^{-1}(B)$ are isomorphic measure algebras. Hence, by the homogeneity of MALG$_\mu$, there is $t\in\Aut(\Omega)$ such that $t^{-1}(r^{-1}(B))=s^{-1}(B)$. By duality, this yields $\pi(r)=\pi(s)$, so $\theta$ is injective. Conversely, given any measure $\nu\in\Reg(K)$, the associated measure algebra MALG$(K,\nu)$ is separable, thus it embeds into the measure algebra of $\Omega$. By duality, this induces a measure preserving transformation $r_\nu\colon\Omega\to (K,\nu)$, that is, $(r_\nu)_*\mu=\nu$. Hence, $\theta$ is a continuous bijection. Since $K^\Omega\sslash\Aut(\Omega)$ is compact, it is moreover a homeomorphism.
\end{proof}

Before passing to the next section we record the following expected counterpoint to the previous facts.

\begin{lem}\label{G times AutOmega on XOmega}
Let $(X,d)$ be a Polish metric space and let $G\leq\Iso(X,d)$ be any Polish subgroup of isometries of $X$. We may see $G$ as the subgroup of constant elements of $G^\Omega$. Then, the quotient $X^\Omega\sslash(G\times\Aut(\Omega))$ is compact if and only if $X$ is compact.\end{lem}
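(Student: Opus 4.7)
The plan is to handle the two directions separately using the quotient identifications already available. For the forward direction, if $X$ is compact, I would apply Lemma~\ref{algebra of quotients}.(1) with $\Aut(\Omega)$ normal in the direct product $G\times\Aut(\Omega)$ to obtain the isometric identification
\[
X^\Omega\sslash(G\times\Aut(\Omega))\simeq(X^\Omega\sslash\Aut(\Omega))\sslash G.
\]
Then Lemma~\ref{quotients of randomized compact sets} identifies $X^\Omega\sslash\Aut(\Omega)$ with the compact space $\Reg(X)$, and the induced isometric action of $G$ on this compact metric space has compact metric quotient, since the quotient map is continuous.

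For the converse, my plan is to produce an explicit sequence with no convergent subsequence in the quotient. Suppose $X$ is non-compact; being Polish and hence complete, $X$ fails to be totally bounded, so I can fix $\epsilon>0$ and an infinite $\epsilon$-separated sequence $(y_k)_{k\in\N}\subset X$. For each $n$, let $r_n\in X^\Omega$ be a random variable taking each value $y_1,\dots,y_n$ on a Borel set of measure $1/n$, so that its pushforward is $\mu_n\coloneqq\frac{1}{n}\sum_{k=1}^n\delta_{y_k}$. If a subsequence $\pi(r_{n_k})$ converged in the quotient to some $\pi(s)$, I could extract $g_k\in G$ and $t_k\in\Aut(\Omega)$ with $d^\Omega(g_kt_kr_{n_k},s)\to 0$, and the natural coupling $\omega\mapsto(g_kt_kr_{n_k}(\omega),s(\omega))$ under $\mu$ would then force the pushforward measures $\tilde\mu_k\coloneqq g_{k*}\mu_{n_k}$ to converge weakly to $\nu\coloneqq s_*\mu$.

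The heart of the argument, and the step I expect to be the only genuinely delicate one, is an atom-mass estimate via the Portmanteau theorem. Since $g_k$ is an isometry, $\tilde\mu_k$ is the uniform probability measure on the $\epsilon$-separated set $g_k(\{y_1,\dots,y_{n_k}\})$, so for any $x\in X$ and $r<\epsilon/2$ the ball $B(x,r)$ contains at most one atom, giving $\tilde\mu_k(B(x,r))\leq 1/n_k\to 0$. Portmanteau then forces $\nu(B(x,r))=0$ for every such ball, and since $X$ is separable it is covered by countably many of them, yielding $\nu(X)=0$ in contradiction with $\nu\in\Reg(X)$. The remaining work --- verifying weak convergence of pushforwards from $L^0$-convergence of random variables and checking the normality hypothesis of Lemma~\ref{algebra of quotients}.(1) --- is routine.
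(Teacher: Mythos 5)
Your argument is correct. The forward direction coincides with the paper's, which simply remarks that ``we already know one implication'': compose Lemma~\ref{algebra of quotients}.(1) with Lemma~\ref{quotients of randomized compact sets} to realize the quotient as a continuous image of the compact space $\Reg(X)$. For the converse, both you and the paper start from an infinite $\epsilon$-separated sequence in the non-compact complete space $X$ and from random variables spreading uniform mass over more and more of these points, but the contradiction is derived by genuinely different mechanisms. The paper works pointwise: assuming $g_nt_n\tilde r_n\to r$ in $X^\Omega$, it uses Chebyshev's inequality to find a large set on which two consecutive approximants are both close to $r$, and then a pigeonhole argument on the nested partitions produces two points lying in the same cell of the coarser partition but in different cells of the finer one, yielding $\epsilon<\epsilon$. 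You instead pass to pushforward measures: $L^1$-convergence of $g_kt_kr_{n_k}$ to $s$ gives weak convergence of $g_{k*}\mu_{n_k}$ to $s_*\mu$ (the coupling controls integrals of bounded Lipschitz functions), and since $g_k$ is an isometry each $g_{k*}\mu_{n_k}$ puts mass at most $1/n_k$ on any ball of radius less than $\epsilon/2$, so Portmanteau forces $s_*\mu$ to vanish on all such balls and hence on $X$, contradicting that it is a probability measure. Your route is conceptually aligned with Lemma~\ref{quotients of randomized compact sets} --- the $\Aut(\Omega)$-quotient is a space of measures, and escaping to ever more diffuse measures is exactly what witnesses non-compactness --- and it avoids the delicate combinatorial step; the paper's route is more elementary and self-contained, using no weak-convergence theory. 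The only hypothesis you should make explicit is completeness of $d$ (so that non-compactness gives an infinite $\epsilon$-separated set), but the paper's proof makes the identical tacit assumption.
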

\begin{proof}
We already know one implication. For the converse, if $X$ is not compact, let $x_i\in X$, $i\in\mbN$, be such that $d(x_i,x_j)>\epsilon$ for $i\neq j$. For $n\in\mbN$, let $\{A^n_i\}_{i<2^n}$ be a partition of $\Omega$ by sets of measure $1/2^n$. Take $r_n=\sum_{i<n}x_i\chi_{A^n_i}$. We claim that the sequence $r_n$ has no convergent subsequence in $X^\Omega\sslash G\times\Aut(\Omega)$.

Suppose for a contradiction that there are a subsequence $\tilde{r}_n=r_{m(n)}$, $\tilde{A}^n_i=A^{m(n)}_i$, and elements $g_n\in G$, $t_n\in\Aut(\Omega)$ such that $g_nt_n\tilde{r}_n=\sum_{i<m(n)}g_nx_i\chi_{t_n\tilde{A}^n_i}$ converges in $X^\Omega$ to a random variable $r$. Let $\delta=\epsilon/16$. For some $N$ we have $\int d(r,\tilde{r}_n)d\mu<\delta$ whenever $n\geq N$. By Chebyshev's inequality, the set $A\subset\Omega$ where we have simultaneously $d(r,\tilde{r}_N)\leq 4\delta$ and $d(r,\tilde{r}_{N+1})\leq 4\delta$ has measure $\mu(A)>1/2$. We can deduce that there are $i,j,k$, $j\neq k$, such that both $A\cap \tilde{A}^N_i\cap \tilde{A}^{N+1}_j$ and $A\cap \tilde{A}^N_i\cap \tilde{A}^{N+1}_k$ are non-empty; say $\omega_j$ is in the former intersection and $\omega_k$ in the latter. Then
\begin{align*}
\epsilon & <d(g_{N+1}x_j,g_{N+1}x_k)=d(\tilde{r}_{N+1}(\omega_j),\tilde{r}_{N+1}(\omega_k))\leq 8\delta+d(r(\omega_j),r(\omega_k))\\
& \leq 16\delta+d(\tilde{r}_N(\omega_j),\tilde{r}_N(\omega_k))=\epsilon+d(g_Nx_i,g_Nx_i)=\epsilon,
\end{align*}
a contradiction.
\end{proof}

\subsection{The automorphism group of the Borel randomization}\label{ss:autMR}

In this subsection we fix a (separable, metric) logic structure $M$, with at least two elements. The automorphism group of $M$, $\Aut(M)$, is a Polish subgroup of the group of isometries of $M$ (that is, with the topology of pointwise convergence). For simplicity of notation we will denote $G=\Aut(M)$.

Let $M^R$ be the Borel randomization of $M$, and let $G^R=\Aut(M^R)$. The groups $G^\Omega$ and $\Aut(\Omega)$ act faithfully on $M^\Omega$, the main sort of $M^R$. Additionally, $\Aut(\Omega)$ acts on the auxiliary sort $\mbA$, and we can consider the trivial action of $G^\Omega$ on $\mbA$, i.e., each $g\in G^\Omega$ acts as the identity of $\mbA$. Combining the actions on each sort, we obtain actions $G^\Omega\actson M^R$, $\Aut(\Omega)\actson M^R$, which are clearly by isomorphisms. Using Lemma~\ref{faithful actions}, we deduce that $G\wr\Omega$ is a topological subgroup of $G^R$.

\begin{lem}\label{identity on the auxiliary sort} If $g\in G^R$ is the identity on the auxiliary sort, then $g\in G^\Omega$.\end{lem}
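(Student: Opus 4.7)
The plan is to construct a random element $\gamma \in G^\Omega$ so that $g$ acts on the main sort by $(gr)(\omega) = \gamma(\omega)(r(\omega))$ almost surely. The starting point is a \emph{locality} observation. Since $g$ fixes the auxiliary sort pointwise, in particular it fixes $\llbracket d(x,y)\rrbracket(r,s) \in \mathbb{A}$ for every pair $r,s \in M^\Omega$, giving $d(r(\omega), s(\omega)) = d(gr(\omega), gs(\omega))$ almost surely; consequently, if $r|_A = s|_A$ on a measurable set $A \subseteq \Omega$, then $gr|_A = gs|_A$ almost surely. The same reasoning applied to $\llbracket \varphi(\bar x)\rrbracket$ for every $L$-formula $\varphi$ shows that $g$ preserves the value of every $L$-formula pointwise.

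Next I build $\gamma$. Fix a countable dense subset $D \subseteq M$ and a countable uniformly dense family of $L$-formulas. For each $a \in D$ choose a Borel representative $\tilde c_a \colon \Omega \to M$ of $gc_a$, where $c_a \in M^\Omega$ is the constant random variable with value $a$. Intersecting countably many full-measure sets, a single full-measure set $\Omega_0$ serves for all tuples from $D$ and all chosen formulas, so that on $\Omega_0$ the assignment $a \mapsto \tilde c_a(\omega)$ is isometric and preserves every chosen formula. By continuity, it extends for each $\omega \in \Omega_0$ to an isometric self-embedding $\gamma(\omega) \colon M \to M$ preserving all $L$-formulas. Using locality, for a simple random variable $r = \sum_i a_i \chi_{A_i}$ with $a_i \in D$ one obtains $(gr)(\omega) = \tilde c_{a_i}(\omega) = \gamma(\omega)(r(\omega))$ almost surely on each $A_i$; an $L^1$-approximation by such simple functions, combined with continuity of $g$ and the fact that $\gamma(\omega)$ is an isometry, extends the identity to arbitrary $r \in M^\Omega$.

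To conclude, I need $\gamma(\omega) \in \Aut(M)$ almost surely, which amounts to surjectivity; this is the step I expect to be the main subtlety, since \emph{a priori} a formula-preserving isometric embedding need not be surjective in a general metric structure. The remedy is to run the same construction with $g^{-1}$ in place of $g$, producing $\gamma'$ satisfying $(g^{-1}r)(\omega) = \gamma'(\omega)(r(\omega))$ almost surely. Specializing to $r = g c_a$ gives $\gamma'(\omega)\gamma(\omega)(a) = a$ almost surely for each $a \in D$, hence $\gamma'(\omega) \circ \gamma(\omega) = \mathrm{id}_M$ on a full-measure set, and symmetrically $\gamma(\omega) \circ \gamma'(\omega) = \mathrm{id}_M$. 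Measurability of $\gamma \colon \Omega \to G$ for the topology of pointwise convergence is immediate from the measurability of the $\tilde c_a$. Therefore $\gamma \in G^\Omega$ and $g$ coincides with its action on $M^\Omega$, as required.
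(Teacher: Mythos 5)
Your proof is correct and follows essentially the same route as the paper: define $\gamma(\omega)$ on a countable dense set via the images of constant random variables, use countability of the language to get a single full-measure set, extend by continuity to an elementary self-embedding, verify the pointwise-action identity, and obtain surjectivity by playing $g$ against $g^{-1}$ on constants. The only (immaterial) differences are that you pass through simple functions and an $L^1$-limit where the paper uses $d(gr(\omega),gc(\omega))=d(r(\omega),c)$ with a sequence $c_n\to r(\omega)$, and that you build the full inverse family $\gamma'$ where the paper just evaluates $g(\omega)$ at $(g^{-1}c)(\omega)$.
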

\begin{proof}
If $g$ is the identity on the auxiliary sort, then for every $L$-formula $\varphi$ and random variable $r\in M^\Omega$ we have $\llbracket\varphi(gr)\rrbracket=g\llbracket\varphi(r)\rrbracket=\llbracket\varphi(r)\rrbracket$, that is, \begin{equation}\label{fgr=fr}\varphi((gr)(\omega))=\varphi(r(\omega))\text{ for almost every }\omega.\end{equation}

Let $D\subset M$ be a countable dense subset and consider $C\subset M^R$ the family of constant random variables taking a value from $D$. Since the language $L$ is countable, by (\ref{fgr=fr}) we have $$\varphi((gc)(\omega))=\varphi(c)$$ for every formula $\varphi$, every tuple $c$ from $C$ and every $\omega$ in a common full-measure set $F\subset\Omega$. Now, for $\omega\in F$ and $c\in D$ we define $$g(\omega)(c)\coloneqq (gc)(\omega)$$ (where the $c$ on the right is the corresponding constant function on $C$), and this induces and elementary map $g(\omega)\colon D\to M$, which extends by continuity to an endomorphism of $M$.

Next we check that, for every $r\in M^\Omega$, \begin{equation}\label{grw=gwrw}(gr)(\omega)=g(\omega)(r(\omega))\text{ for almost every }\omega.\end{equation}
By (\ref{fgr=fr}), for each $r\in M^\Omega$ there is a full-measure subset $F'\subset F$ such that $$d(gr(\omega),gc(\omega))=d(r(\omega),c)$$ for every $c\in C$ and $\omega\in F'$. Let $\omega\in F'$, and take a sequence $c_n\in C$ such that $c_n\to r(\omega)$. Then we have $d(gr(\omega),gc_n(\omega))=d(r(\omega),c_n)\to 0$, and on the other hand $gc_n(\omega)=g(\omega)(c_n)\to g(\omega)(r(\omega))$. That is to say, $gr(\omega)=g(\omega)(r(\omega))$. This proves (\ref{grw=gwrw}).

Note that $g(\omega)$ is surjective (i.e., $g(\omega)\in G$) for almost every $\omega$. Indeed, since the image of $g(\omega)$ is closed, it is enough to see that it contains $D$. But for a constant $c\in C$ we have, by~(\ref{grw=gwrw}), $g(\omega)(g^{-1}c(\omega))=g(g^{-1}c(\omega))=c(\omega)=c$ for every $\omega$ in a full-measure set that depends on $c$. Since $D$ is countable, we are done.

Finally, we check that the map $\omega\in F\subset\Omega\mapsto g(\omega)\in G$ is measurable, which shows that it belongs to $G^\Omega$. It is enough to see that, for every $c,d\in D$ and $\epsilon>0$, the set $A=\{\omega\in F:d(g(\omega)(c),d)<\epsilon\}$ is measurable. This is clear, since $A=\{\omega\in F:d(gc(\omega),d)<\epsilon\}$ and $gc\in M^\Omega$ is measurable.
\end{proof}

Let $\Aut(\mbA)$ be the automorphism group of $\mbA$ as a reduct of $M^R$. It is easy to check that the map $t\in\Aut(\Omega)\mapsto t^*\in\Aut(\mbA)$ is surjective, where $(t^*R)(\omega)=R(t(\omega))$ for $R\in\mbA$. (That is, the left actions $\Aut(\Omega)\actson\mbA$ and $\Aut(\mbA)\actson\mbA$ are anti-isomorphic; remark also that, as topological groups, $\Aut(\Omega)\simeq\Aut(\mbA)$, since every group is anti-isomorphic to itself.)

\begin{theorem}\label{thm G^R}
For every separable structure $M$ we have $\Aut(M^R)=\Aut(M)\wr\Omega$.
\end{theorem}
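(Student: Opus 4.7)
My plan is to show $\Aut(M^R)\subseteq G\wr\Omega$, since the reverse inclusion $G\wr\Omega\hookrightarrow G^R$ has already been established (using Lemma~\ref{faithful actions} together with the construction of the simultaneous actions of $G^\Omega$ and $\Aut(\Omega)$ on the two sorts of $M^R$). So given an arbitrary $g\in G^R=\Aut(M^R)$, the goal is to factor $g$ as a product of an element of $\Aut(\Omega)$ and an element of $G^\Omega$, where both factors are viewed as elements of $G^R$.

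The first step is to peel off the action of $g$ on the auxiliary sort. Since $g$ preserves the reduct $\mbA$, its restriction $g\!\upharpoonright_\mbA$ lies in $\Aut(\mbA)$. By the surjectivity of the map $\Aut(\Omega)\to\Aut(\mbA)$, $t\mapsto t^*$, recalled just before the theorem, there exists $s\in\Aut(\Omega)$ such that $s^*$ coincides with $g\!\upharpoonright_\mbA$. Using the anti-isomorphism between the left action of $\Aut(\Omega)$ on $\mbA$ (as a subgroup of $G\wr\Omega\le G^R$) and the action by $t\mapsto t^*$, I can choose the correct $t\in\Aut(\Omega)$ so that, viewed as an element of $G^R$, it agrees with $g$ on $\mbA$.

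The second step is to exploit this: the element $h\coloneqq t^{-1}g\in G^R$ then acts as the identity on the auxiliary sort. At this point Lemma~\ref{identity on the auxiliary sort} applies directly and yields $h\in G^\Omega$. Consequently $g=th\in\Aut(\Omega)\cdot G^\Omega$, and by definition of the semidirect product this is an element of $G\wr\Omega$, which is what we wanted.

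The substantive work has already been carried out in Lemma~\ref{identity on the auxiliary sort}, which extracts a measurable family $\omega\mapsto g(\omega)\in G$ from an automorphism acting trivially on the auxiliary sort; the only potentially delicate point left in the present proof is bookkeeping around the direction of the action (the left/right distinction between $\Aut(\Omega)\actson\mbA$ and $\Aut(\mbA)\actson\mbA$), which is handled by replacing $s$ with $s^{-1}$ if needed and causes no real trouble. Thus the theorem is essentially a corollary of the preceding lemma together with the surjectivity $\Aut(\Omega)\twoheadrightarrow\Aut(\mbA)$.
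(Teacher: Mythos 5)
Your proof is correct and follows the paper's own argument essentially verbatim: restrict to the auxiliary sort, use the surjectivity of $t\mapsto t^*$ to find an element of $\Aut(\Omega)\leq G^R$ matching the action there, and apply Lemma~\ref{identity on the auxiliary sort} to the remaining factor. The only (immaterial) difference is that you factor as $g=th$ with $h\in G^\Omega$ rather than peeling off the $\Aut(\Omega)$ part on the right.
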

\begin{proof}
We have already established that $G\wr\Omega$ is a topological subgroup of $G^R$. Let $\sigma\in G^R$. The restriction of $\sigma$ to the auxiliary sort induces an automorphism of $\mbA$, say $t^*$ for $t\in\Aut(\Omega)$. Let $g=\sigma t$. Then $g$ is the identity on the auxiliary sort, so by the previous lemma we have $g\in G^\Omega$. We conclude that $G^R$ is the product of $G^\Omega$ and $\Aut(\Omega)$, so the proof is complete.
\end{proof}

As an application we get a new proof of the following preservation result of \cite{benkei,benOntheories}. Recall that $M$ is \emph{$\aleph_0$-categorical} if every separable model of its first-order theory is isomorphic to $M$.

\begin{cor}\label{separable categoricity is preserved} If $M$ is $\aleph_0$-categorical, then so is $M^R$.\end{cor}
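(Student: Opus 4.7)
My plan is to deduce $\aleph_0$-categoricity of $M^R$ from the description $\Aut(M^R)=G\wr\Omega$ just obtained, together with the continuous-logic Ryll--Nardzewski theorem (which says that a separable metric structure is $\aleph_0$-categorical if and only if its automorphism group acts approximately oligomorphically on each sort). So the task reduces to checking that, for every $n,m\in\mbN$, the metric quotient $\big((M^\Omega)^n\times\mbA^m\big)\sslash(G\wr\Omega)$ is compact.

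The key step is to package the two sorts into a single Polish metric space on which $G$ acts, then invoke the machinery already developed. Using the canonical identifications $(M^\Omega)^n\simeq (M^n)^\Omega$ and $\mbA^m\simeq([0,1]^m)^\Omega$, together with the fact that $X^\Omega\times Y^\Omega\simeq (X\times Y)^\Omega$, I would rewrite
\[
(M^\Omega)^n\times\mbA^m\ \simeq\ (M^n\times [0,1]^m)^\Omega,
\]
and check that this identification is compatible with the action of $G\wr\Omega$, where $G$ acts diagonally on $M^n\times [0,1]^m$ by its natural action on $M^n$ and trivially on $[0,1]^m$. (The only mild point is that $G^\Omega$ acts trivially on the auxiliary sort, which is exactly matched by the triviality of $G$ on the $[0,1]^m$ factor; the $\Aut(\Omega)$-action is transparent from the definitions.)

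Having done this, the proof concludes by a direct application of Lemma~\ref{algebra of quotients}.(3):
\[
(M^n\times [0,1]^m)^\Omega\sslash(G\wr\Omega)\ \simeq\ \big((M^n\times [0,1]^m)\sslash G\big)^\Omega\sslash\Aut(\Omega).
\]
Since $G$ acts trivially on $[0,1]^m$, we have $(M^n\times [0,1]^m)\sslash G=(M^n\sslash G)\times [0,1]^m$, which is compact because $M$ is $\aleph_0$-categorical (so $M^n\sslash G$ is compact by Ryll--Nardzewski applied to $M$). Setting $K=(M^n\sslash G)\times [0,1]^m$, the quotient $K^\Omega\sslash\Aut(\Omega)$ is compact by Lemma~\ref{quotients of randomized compact sets} (which identifies it with $\Reg(K)$), or alternatively by the argument already used in Proposition~\ref{wreath on rando is app olig}.

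I do not anticipate a serious obstacle: all the work is really in Theorem~\ref{thm G^R} and in Proposition~\ref{wreath on rando is app olig}. The only thing to be careful about is making sure the two-sorted structure is handled correctly, i.e.\ that the Ryll--Nardzewski criterion is applied on finite tuples mixing both sorts, and that the action of $G^\Omega$ on the auxiliary sort really is trivial so that the reduction to $K^\Omega\sslash\Aut(\Omega)$ for a single compact $K$ goes through.
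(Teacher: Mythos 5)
Your proposal is correct and is essentially the paper's own argument: the paper deduces the corollary in one line from the continuous Ryll--Nardzewski theorem, Theorem~\ref{thm G^R}, and Proposition~\ref{wreath on rando is app olig}. Your only addition is to spell out explicitly how the auxiliary sort $\mbA^m$ is absorbed into a single compact quotient $K=(M^n\sslash G)\times[0,1]^m$ via the trivial $G$-action on $[0,1]^m$ — a point the paper leaves implicit (it could also be handled by noting $\mbA$ is interpretable in the main sort), but your treatment is accurate and goes through Lemma~\ref{algebra of quotients} and Lemma~\ref{quotients of randomized compact sets} exactly as intended.
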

\begin{proof}
By the continuous version of the theorem of Ryll-Nardzewski, a structure $N$ is $\aleph_0$-categorical if and only if the action $\Aut(N)\actson N$ is approximately oligomorphic (see \cite[\textsection 5]{bentsa}). Thus, the result follows from the previous theorem and Proposition~\ref{wreath on rando is app olig}.
\end{proof}

In addition to the automorphism group of $M$, one can consider the topological semigroup of endomorphisms (elementary self-embeddings) of $M$, which we denote by $\End(M)$. For $\aleph_0$-categorical structures we have the following pleasant fact, observed in \cite[\textsection 2.2]{bentsa}.

\begin{prop}\label{prop G_L=End} Suppose $M$ is $\aleph_0$-categorical, $G=\Aut(M)$. Then $\widehat{G}_L=\End(M)$, that is, $\End(M)$ is the left completion of $\Aut(M)$.\end{prop}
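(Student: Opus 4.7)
The plan is to identify $\widehat{G}_L$ with $\End(M)$ viewed as a topological monoid under the pointwise-convergence topology. Fix a countable dense sequence $(a_n)_{n\in\mbN}\subset M$ and set
$$d_L(g,h)=\sum_{n\in\mbN} 2^{-n}\min\{1,d_M(g a_n, h a_n)\};$$
this is a compatible left-invariant metric on $G$, left-invariance being immediate from the fact that every element of $G$ is an isometry of $M$. A routine check shows that $(g_n)\subset G$ is $d_L$-Cauchy if and only if $(g_n(a_k))_n$ is Cauchy in $M$ for every $k$, and by completeness of $M$ together with the isometry of each $g_n$, this is equivalent to $(g_n)$ converging pointwise on all of $M$ to some isometric map $f\colon M\to M$.

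The first observation is that every such pointwise limit $f$ of a sequence in $G$ lies in $\End(M)$: since each formula $\varphi$ is uniformly continuous on tuples, $\varphi^M(f\bar b)=\lim_n\varphi^M(g_n\bar b)=\varphi^M(\bar b)$ for every tuple $\bar b$ from $M$. This yields a well-defined, injective, continuous semigroup homomorphism $\widehat{G}_L\to\End(M)$, and this step does not use $\aleph_0$-categoricity.

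The main step is surjectivity, and this is where $\aleph_0$-categoricity enters. Given $f\in\End(M)$, $N\in\mbN$ and $\epsilon>0$, it suffices to find $g\in G$ with $d_M(g a_k,f a_k)<\epsilon$ for all $k\leq N$. Let $\bar a=(a_1,\dots,a_N)$; since $f$ is elementary, $\tp(f\bar a)=\tp(\bar a)$, and by the continuous Ryll--Nardzewski theorem (the same input invoked in Corollary~\ref{separable categoricity is preserved}), the $G$-orbit of $\bar a$ is dense in the realization set of $\tp(\bar a)$, so such a $g$ exists. A standard diagonal argument then produces a sequence in $G$ converging pointwise, hence in $d_L$, to $f$, proving surjectivity.

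The main obstacle is precisely this last step: without $\aleph_0$-categoricity, an elementary self-embedding could send tuples to realizations of the same type that do not lie in the corresponding $G$-orbit closure, so the identification $\widehat{G}_L=\End(M)$ would fail. Everything else is formal manipulation of the left uniformity and continuity of formulas.
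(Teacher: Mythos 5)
Your proof is correct. The paper itself only cites the literature (\cite{bit15}, Fact~2.14, adapted via \cite{bentsa}, Lemma~2.3), and your argument is essentially the standard one found there: pointwise limits of automorphisms are elementary self-embeddings, and surjectivity follows because, by the continuous Ryll--Nardzewski theorem, realizations of $\tp(\bar a)$ in $M$ form exactly the closure of the orbit $G\bar a$.
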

\begin{proof}
See \cite{bit15}, Fact~2.14. The proof adapts readily to the case of metric structures, as per \cite{bentsa}, Lemma~2.3.
\end{proof}

For instance, the left completion of $\Aut(\Omega)\simeq\Aut(\mbA)$ is $\End(\mbA)$, and the latter is anti-isomorphic to $\End(\Omega)$ by the map $t\in\End(\Omega)\mapsto t^*\in\End(\mbA)$. This shows that $\End(\Omega)$ is the right completion of $\Aut(\Omega)$.

\begin{cor}\label{End(M^R)} Let $M$ be an $\aleph_0$-categorical structure, $G=\Aut(M)$. Then $\End(M^R)=\widehat{(G^R)}_L=\End(M)^\Omega\rtimes\End(\Omega)^*$.\end{cor}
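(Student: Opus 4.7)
The plan is to chain together the four results immediately preceding the corollary, with essentially no extra work. Specifically, I would first apply Corollary~\ref{separable categoricity is preserved} to conclude that $M^R$ is itself an $\aleph_0$-categorical structure, so that Proposition~\ref{prop G_L=End} applies to $N=M^R$. This gives the first equality $\End(M^R)=\widehat{(G^R)}_L$, where $G^R=\Aut(M^R)$.

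For the second equality, I would use Theorem~\ref{thm G^R} to identify $G^R$ with $G\wr\Omega$ as a topological group, so that $\widehat{(G^R)}_L=\widehat{(G\wr\Omega)}_L$. Then Lemma~\ref{left completion of GwrOmega} computes this as $(\widehat{G}_L)^\Omega\rtimes\End(\Omega)^*$. Finally, applying Proposition~\ref{prop G_L=End} a second time, now to the $\aleph_0$-categorical structure $M$ itself, gives $\widehat{G}_L=\End(M)$, which yields the claimed description $\End(M)^\Omega\rtimes\End(\Omega)^*$.

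There is no genuine obstacle; the only thing worth being careful about is that the two applications of Proposition~\ref{prop G_L=End} have their hypotheses verified, which is exactly the content of Corollary~\ref{separable categoricity is preserved} (for $M^R$) and the standing assumption (for $M$). In writing the proof I would simply display the chain
\[
\End(M^R)=\widehat{(G^R)}_L=\widehat{(G\wr\Omega)}_L=(\widehat{G}_L)^\Omega\rtimes\End(\Omega)^*=\End(M)^\Omega\rtimes\End(\Omega)^*,
\]
citing in order Corollary~\ref{separable categoricity is preserved} together with Proposition~\ref{prop G_L=End}, Theorem~\ref{thm G^R}, Lemma~\ref{left completion of GwrOmega}, and Proposition~\ref{prop G_L=End} again.
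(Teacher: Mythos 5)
Your proof is correct and follows exactly the paper's argument, which likewise combines Corollary~\ref{separable categoricity is preserved}, Theorem~\ref{thm G^R}, Proposition~\ref{prop G_L=End} and Lemma~\ref{left completion of GwrOmega}. The chain of equalities you display is precisely the intended combination of these results.
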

\begin{proof}
Combine Corollary~\ref{separable categoricity is preserved}, Theorem~\ref{thm G^R}, Proposition~\ref{prop G_L=End} and Lemma~\ref{left completion of GwrOmega}.
\end{proof}

\begin{rem}\label{rem submodels of M^R} Suppose $M$ is $\aleph_0$-categorical. Since $M^R$ is $\aleph_0$-categorical too, the elementary submodels of $M^R$ are the images of its elementary self-embeddings. We deduce from the previous corollary that (the main sort of) a submodel of $M^R$ consists of random variables of the form $hs^*r$ for $r\in M^\Omega$ and some fixed pair $h\in\End(M)^\Omega$, $s\in\End(\Omega)$. In other words, a submodel is given by choosing (measurably) for each $\omega\in\Omega$ a submodel $M_\omega\prec M$ (the image of $h(\omega)$), then considering all sections $\Omega\to\bigcup_{\omega\in\Omega}M_\omega$ that are measurable with respect to a fixed factor of $\Omega$ (the $\sigma$-algebra generated by $s$).\end{rem}

\noindent\hrulefill

\section{Randomized compactifications}\label{s:rando of ambits}

\subsection{The Markov randomization}\label{ss:Markov}

The results of the previous section support the view that the randomization of an isometric action $G\actson (X,d)$ should be considered to be the action $G\wr\Omega\actson (X^\Omega,d^\Omega)$. A natural question is what should be considered as the randomization of a $G$-flow, that is, of a continuous action $G\actson K$ on a compact space. Seemingly, there is not a canonical answer for this question. In this subsection, we will introduce a construction that provides a satisfactory answer for the \emph{Roelcke compactification} of Roelcke precompact Polish groups.

Given a compact metrizable space $K$ and a probability measure $\lambda\in\Reg(\Omega\times K)$, we will denote by $\lambda|_\Omega$ the pushforward of $\lambda$ by the projection $\Omega\times K\to\Omega$. In what follows, we fix two copies $\Omega_0$ and $\Omega_1$ of $\Omega$; we still denote the Lebesgue measure on each of them by $\mu$.

\begin{defin} Let $K$ be a compact metrizable space. We define the \emph{Markov randomization} of $K$ as the compact space $$\mcM(\Omega,K)\coloneqq\{\lambda\in\Reg(\Omega_0\times K\times\Omega_1):\lambda|_{\Omega_0}=\mu,\ \lambda|_{\Omega_1}=\mu\}.$$
\end{defin}

For instance, if $1$ denotes the one-point space, then $\mcM(\Omega,1)$ is just the space of self-joinings of the Lebesgue measure, which can be identified with the space of Markov operators of $L^2(\Omega)$ (see, for instance, \cite[Ch.\ 6, \textsection 2]{glasnerBook}).

If $G\actson K$ is a continuous action of a Polish group on $K$, then we have an induced continuous action $G\wr\Omega\actson\mcM(\Omega,K)$. In order to describe it, we observe first that we can identify $$\mcM(\Omega,K)\simeq E\sslash\Aut(\Omega),$$ where $E=\{(s,k,r)\in (\Omega_0\times K\times\Omega_1)^\Omega:s_*\mu=\mu,\ r_*\mu=\mu\}$, and $\Aut(\Omega)$ acts on it by restriction of the action $\Aut(\Omega)\actson (\Omega_0\times K\times\Omega_1)^\Omega$. Indeed, the identification follows from a straightforward adaptation of the proof of Lemma~\ref{quotients of randomized compact sets}; the measure $\lambda$ corresponding to the class of a triple $(r,k,s)$ is defined by the relation $\int fd\lambda=\int f(r,k,s)d\omega$, for $f\in C(\Omega_0\times K\times\Omega_1)$. Next, we remark that $E$ is naturally homeomorphic to the product $\End(\Omega)\times K^\Omega\times\End(\Omega)$. The corresponding action of $\Aut(\Omega)$ is given by $t\cdot(s,k,r)=(st^{-1},kt^{-1},rt^{-1})$, for $t\in\Aut(\Omega)$, $s,r\in\End(\Omega)$ and $k\in K^\Omega$. Hence, by considering the quotient with respect to this action, we have $$\mcM(\Omega,K)\simeq\left(\End(\Omega)\times K^\Omega\times\End(\Omega)\right)\sslash\Aut(\Omega).$$ Let us denote by $[s,k,r]\in\mcM(\Omega,K)$ the image by the previous homeomorphism of the class of the triple $(s,k,r)$. Then, given $t\in\Aut(\Omega)$, $g\in G^\Omega$, we have natural actions $$t[s,k,r]\coloneqq [ts,k,r],\ g[s,k,r]\coloneqq [s,(s^*\cdot g)k,r].$$ Here, $(s^*\cdot g) k(\omega)=g(s(\omega))(k(\omega))$. These actions are compatible with the multiplication law of the semidirect product of $G^\Omega$ and $\Aut(\Omega)$, thus they induce an action $G\wr\Omega\actson\mcM(\Omega,K)$, which is moreover continuous.

We recall that a \emph{compactification} (or \emph{ambit}) of $G$ is given by a continuous action $G\actson K$ on a compact Hausdorff space together with a distinguished point $k_0\in K$ such that $Gk_0$ is dense in $K$.

\begin{prop} If $(K,k_0)$ is a metrizable compactification of $G$, then $\mcM(\Omega,K)$ is a (metrizable) compactification of $G\wr\Omega$ with distinguished point $[1,k_0,1]$ (where $k_0\in K^\Omega$ is seen as a constant random variable).\end{prop}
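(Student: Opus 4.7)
Compactness and metrizability of $\mcM(\Omega,K)$ follow from its description as a closed subset of $\Reg(\Omega_0\times K\times\Omega_1)$, and continuity of the induced action $G\wr\Omega\actson\mcM(\Omega,K)$ was already addressed above. The only content to prove, therefore, is density of the $G\wr\Omega$-orbit of $[1,k_0,1]$. I would first compute this orbit using the action formulas: for $g\in G^\Omega$ and $t\in\Aut(\Omega)$,
$$gt\cdot[1,k_0,1]\;=\;g\cdot[t,k_0,1]\;=\;[t,(t^*\cdot g)k_0,1].$$
Since $g\mapsto t^*\cdot g$ is a bijection of $G^\Omega$, the orbit equals $\{[t,hk_0,1]:t\in\Aut(\Omega),\,h\in G^\Omega\}$, where $hk_0\in K^\Omega$ denotes the random variable $\omega\mapsto h(\omega)(k_0)$.

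The key approximation step I would then establish is that $\{hk_0:h\in G^\Omega\}$ is dense in $K^\Omega$. Fix a compatible metric on $K$ and, given $k\in K^\Omega$ and $\epsilon>0$, partition $K$ into finitely many non-empty Borel sets $B_1,\ldots,B_n$ of diameter less than $\epsilon$. Since $Gk_0$ is dense in $K$, for each $i$ I may pick $h_i\in G$ with $d(h_ik_0,q)<2\epsilon$ for every $q\in B_i$. Letting $h\in G^\Omega$ be the simple function taking value $h_i$ on $k^{-1}(B_i)\subset\Omega$ yields $d^\Omega(hk_0,k)\leq 2\epsilon$. The analogous and classical fact that $\Aut(\Omega)$ is dense in $\End(\Omega)$ is given by Proposition~\ref{prop G_L=End} (applied to $\mbA$) together with the anti-isomorphism between $\End(\mbA)$ and $\End(\Omega)$.

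To finish, fix $\lambda=[s,k,r]\in\mcM(\Omega,K)$, and use the two density statements to pick $t_n,u_n\in\Aut(\Omega)$ with $t_n\to s$ and $u_n\to r$ in $\End(\Omega)$, and $h_n\in G^\Omega$ with $h_nk_0\to k$ in $K^\Omega$. Then $(t_n,h_nk_0,u_n)\to(s,k,r)$ in the product $\End(\Omega)\times K^\Omega\times\End(\Omega)$, and continuity of the quotient map to $\mcM(\Omega,K)$ yields $[t_n,h_nk_0,u_n]\to\lambda$. Applying the action of $u_n\in\Aut(\Omega)$ rewrites each approximant as $[t_nu_n^{-1},h_n'k_0,1]$, where $t_nu_n^{-1}\in\Aut(\Omega)$ and $h_n'\coloneqq h_n\circ u_n^{-1}\in G^\Omega$; these lie in the orbit, proving density.

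The only delicate step is the continuity of the quotient map $\End(\Omega)\times K^\Omega\times\End(\Omega)\to\mcM(\Omega,K)$, since it ties together convergence in measure in all three coordinates with weak-$*$ convergence of the pushforward measures $\int f\, d\lambda=\int f(s,k,r)\,d\mu$. This is a standard consequence of the dominated convergence theorem, after passing to a subsequence to obtain almost sure convergence of the triples and applying it to test functions $f\in C(\Omega_0\times K\times\Omega_1)$; everything else in the argument is bookkeeping.
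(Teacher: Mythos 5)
Your argument is correct and follows essentially the same route as the paper: reduce to density of the orbit, note that $G^\Omega k_0$ is dense in $K^\Omega$ via finite-range approximations using density of $Gk_0$ in $K$, use density of $\Aut(\Omega)$ in $\End(\Omega)$, and approximate $[s,k,r]$ by orbit elements $[t_n,g_nk_0,u_n]$. The extra details you supply (the explicit orbit computation and the continuity of the map $(s,k,r)\mapsto\lambda$) are points the paper leaves implicit, and they check out.
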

\begin{proof}
We have to check that the orbit of $[1,k_0,1]$ by $G\wr\Omega$ is dense in $\mcM(\Omega,K)$. Since $Gk_0$ is dense in $K$, we can approximate the elements of $K^\Omega$ by random variables of finite range taking values in $Gk_0$; that is, $G^\Omega k_0$ is dense in $K^\Omega$. Now, if we are given $[s,k,r]\in\mcM(\Omega,K)$, we can take $t_n,u_n\in\Aut(\Omega)$, $g_n\in G^\Omega$, such that $t_n\to s$, $u_n\to r$ and $g_nk_0\to k$. Then, $[t_n,g_nk_0,u_n]=t_ng_nu_n^{-1}[1,k_0,1]$ is in the orbit of $[1,k_0,1]$ and converges to $[s,k,r]$.
\end{proof}

Let $G$ be a Polish group, $\widehat{G}_L$ its left completion. The completion of $G$ with respect to its \emph{lower uniformity} (the infimum of the left and right uniformities) is called the \emph{Roelcke completion} of~$G$. As observed in \cite[\textsection 2.1]{bentsa}, the Roelcke completion of~$G$ can be identified with the metric quotient $R(G)=(\widehat{G}_L\times\widehat{G}_L)\sslash G$. Note that $R(G)$ is a Polish space, and $G$ acts continuously on it by the formula $$g\overline{G(x,y)}=\overline{G(xg^{-1},y)},$$ where $g\in G$ and $x,y\in\widehat{G}_L$. Theorem~2.4 in \cite{bentsa} shows that $G$ admits a faithful approximately oligomorphic action if and only if $R(G)$ is compact (i.e., $G$ is \emph{Roelcke precompact}). In that case, $R(G)$ is a metrizable compactification of $G$ (with distinguished point $\overline{G(1,1)}$), called the \emph{Roelcke compactification} of $G$.

\begin{theorem}
Let $G$ be a Polish Roelcke precompact group. Then we have an isomorphism of compactifications of $G\wr\Omega$, $$R(G\wr\Omega)\simeq\mcM(\Omega,R(G))$$ (i.e., a $G\wr\Omega$-equivariant homeomorphism respecting the distinguished points).
\end{theorem}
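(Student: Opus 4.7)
The plan is to compute $R(G\wr\Omega)$ by decomposing the Roelcke quotient in stages, via Lemmas~\ref{left completion of GwrOmega} and~\ref{algebra of quotients}, and to match the result with the description of $\mcM(\Omega, R(G))$ established just before the theorem statement. By definition, $R(G\wr\Omega) = (\widehat{(G\wr\Omega)}_L)^2 \sslash (G\wr\Omega)$, and Lemma~\ref{left completion of GwrOmega} identifies $\widehat{(G\wr\Omega)}_L$ with $\widehat{G}_L^\Omega \rtimes \End(\Omega)^*$. Since $G^\Omega$ is a normal Polish subgroup of $G\wr\Omega$, I would apply Lemma~\ref{algebra of quotients}.(1) to quotient first by $G^\Omega$ and then by $\Aut(\Omega)$.

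For the first quotient, the left action of $G^\Omega$ on $(\widehat{G}_L^\Omega \rtimes \End(\Omega)^*)^2$ is trivial on the $\End(\Omega)^{*2}$ factor and diagonal on $(\widehat{G}_L^\Omega)^2 \simeq (\widehat{G}_L^2)^\Omega$. Since the $G$-action on $\widehat{G}_L^2$ is precisely the one whose quotient defines $R(G)$, Lemma~\ref{algebra of quotients}.(2) yields $(\widehat{G}_L^2)^\Omega \sslash G^\Omega \simeq R(G)^\Omega$. Explicitly, the $G^\Omega$-class of $((h_0, s_0^*), (h_1, s_1^*))$ corresponds to the triple $(\kappa, s_0^*, s_1^*) \in R(G)^\Omega \times \End(\Omega)^{*2}$ with $\kappa(\omega) := \overline{G(h_0(\omega), h_1(\omega))}$.

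For the second quotient, I would compute the induced $\Aut(\Omega)$-action. Using the semidirect-product multiplication and the formula $(s^* \cdot h)(\omega) = h(s\omega)$ for the action of $\End(\Omega)^*$ on $\widehat{G}_L^\Omega$, a direct calculation yields $t \cdot (\kappa, s_0^*, s_1^*) = (t \cdot \kappa, (s_0 t^{-1})^*, (s_1 t^{-1})^*)$ for $t \in \Aut(\Omega)$. After permuting factors and identifying $\End(\Omega)^*$ with $\End(\Omega)$ as sets, this matches the $\Aut(\Omega)$-action on $\End(\Omega) \times R(G)^\Omega \times \End(\Omega)$ whose quotient was identified with $\mcM(\Omega, R(G))$ before the theorem. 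This produces a natural homeomorphism $\bar\Phi\colon R(G\wr\Omega) \to \mcM(\Omega, R(G))$ sending $[((h_0, s_0^*), (h_1, s_1^*))]$ to $[s_0, \kappa, s_1]$.

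Finally, I would verify the basepoint and equivariance. The basepoint $[(1,1)]$ of $R(G\wr\Omega)$ maps to $[1, \kappa_0, 1]$ with $\kappa_0$ the constant random variable at $\overline{G(1,1)}$, which is the basepoint of $\mcM(\Omega, R(G))$. For equivariance, computing $(k,t)\cdot[(u_0, u_1)] = [(u_0(k,t)^{-1}, u_1)]$ with $u_i = (h_i, s_i^*)$, and using $(k,t)^{-1} = (t^{-1}\cdot k^{-1}, t^{-1})$ together with the product rule in the left completion, shows that $\bar\Phi$ sends this to $[ts_0, ((ts_0)^* \cdot k)\kappa, s_1]$, matching the action $(k,t)\cdot[s_0, \kappa, s_1]$ defined just before the theorem by $t[s,\kappa,r] = [ts,\kappa,r]$ and $g[s,\kappa,r] = [s, (s^*\cdot g)\kappa, r]$. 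The main obstacle is bookkeeping: consistently tracking the conventions for the anti-isomorphism $^*$, the embedding $\Aut(\Omega) \hookrightarrow \End(\Omega)^*$, and the left/right natures of the various actions, so that the induced $\Aut(\Omega)$-action on the intermediate quotient aligns with the formula defining $\mcM(\Omega, R(G))$ and the final equivariance check produces the correct expression.
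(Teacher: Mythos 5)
Your proposal is correct and follows essentially the same route as the paper: identify $\widehat{(G\wr\Omega)}_L$ via Lemma~\ref{left completion of GwrOmega}, split the Roelcke quotient through $G^\Omega$ and then $\Aut(\Omega)$ using Lemma~\ref{algebra of quotients}, obtain $\left(\End(\Omega)\times R(G)^\Omega\times\End(\Omega)\right)\sslash\Aut(\Omega)$, and match it with the presentation of $\mcM(\Omega,R(G))$ given before the theorem. You simply make explicit the equivariance and basepoint verifications that the paper leaves as a routine check.
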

\begin{proof}
Using Corollary~\ref{left completion of GwrOmega} and Lemma~\ref{algebra of quotients},
\begin{align*}
R(G\wr\Omega) & \simeq\left((\widehat{G}_L)^\Omega\times\End(\Omega)^*\times(\widehat{G}_L)^\Omega\times\End(\Omega)^*\right)\sslash (G\wr\Omega)\\
              & \simeq\left(\End(\Omega)^*\times\left((\widehat{G}_L\times\widehat{G}_L)^\Omega\sslash G^\Omega\right)\times\End(\Omega)^*\right)\sslash\Aut(\Omega)\\
              & \simeq\left(\End(\Omega)\times R(G)^\Omega\times\End(\Omega)\right)\sslash\Aut(\Omega)\\
              & \simeq\mcM(\Omega,R(G)).
\end{align*}
It is easy to verify that the given homeomorphism respects the actions of $G\wr\Omega$ and the distinguished points.
\end{proof}

We remark next that the Markov randomization behaves well with respect to \emph{semitopological semigroups}. Recall that a topological space with a semigroup law is semitopological if multiplication is separately continuous.

Let $S$ be a compact metrizable semitopological semigroup. Then, the space $S^\Omega$ is a semitopological semigroup with pointwise multiplication. Notice that, since $S$ is separable metrizable, then the product in $S$, being separately continuous, is in fact jointly measurable. Hence the pointwise product of two elements of $S^\Omega$ is again in $S^\Omega$.

Now we can define a product on $\mcM(\Omega,S)$, as follows. Given $\lambda,\nu\in\mcM(\Omega,S)$, it is always possible to find $t,s,r\in\End(\Omega)$ and $p,q\in S^\Omega$ such that $\lambda=[t,p,s]$, $\nu=[s,q,r]$, and such that the $\sigma$-algebra on $\Omega$ generated by $t,p$ is relatively independent from the $\sigma$-algebra generated by $q,r$ over the $\sigma$-algebra generated by $s$. Then, we set $$\lambda\nu\coloneqq [t,pq,r].$$ The relative independence condition ensures the good definition. Alternatively, the measure $\lambda\nu$ can be defined by the formula $$\int fd\lambda\nu=\iiint f(\omega_0,xy,\omega_1)d\lambda^\omega(\omega_0,x)d\nu_\omega(y,\omega_1)d\omega$$ for $f\in C(\Omega_0\times S\times\Omega_1)$, where $\lambda^\omega$, $\nu_\omega$ are given by the disintegrations of $\lambda$, $\nu$ over $\Omega_1$ and $\Omega_0$, respectively, i.e., $$\lambda=\int\lambda^{\omega_1}\times\delta_{\omega_1}d\omega_1,\ \nu=\int\delta_{\omega_0}\times\nu_{\omega_0}d\omega_0.$$

The product thus defined is associative and separately continuous; we omit the (routine) verification. Hence we have the following.

\begin{prop}
If $S$ is a compact, metrizable, semitopological semigroup, then so is $\mcM(\Omega,S)$, with the product defined above.
\end{prop}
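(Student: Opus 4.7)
The plan is to work throughout from the explicit integral formula
$$\int f\, d(\lambda\nu) = \iiint f(\omega_0, xy, \omega_1)\, d\lambda^\omega(\omega_0, x)\, d\nu_\omega(y, \omega_1)\, d\omega,$$
which is manifestly independent of any choice of triple representative, and to reconcile it with $[t, pq, r]$ only at the end. First I would check that the formula defines an element of $\mcM(\Omega, S)$. The integrand is jointly Borel, because multiplication in $S$ is jointly Borel (it is separately continuous on a separable metric space) and the disintegrations $\omega \mapsto \lambda^\omega$ and $\omega \mapsto \nu_\omega$ are Borel maps into $\Reg(\Omega_0 \times S)$ and $\Reg(S \times \Omega_1)$; hence the right-hand side is a positive linear functional of norm $1$ on $C(\Omega_0 \times S \times \Omega_1)$. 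Taking $f = g \circ \pi_{\Omega_0}$ yields $\int g\, d\mu$, because the integrations against $\nu_\omega$ collapse to $1$ and $\lambda|_{\Omega_0} = \mu$; symmetrically on the right. So both marginals of $\lambda\nu$ are $\mu$.

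Next I would show agreement with the triple definition. Given $\lambda = [t, p, s]$ and $\nu = [s, q, r]$ under the relative independence hypothesis, the disintegration of $\lambda$ over its $\Omega_1$-marginal is a.s.\ the conditional law of $(t, p)$ given $s = \omega$, and analogously for $\nu_\omega$. The independence hypothesis says exactly that, conditionally on $s = \omega$, the pairs $(t, p)$ and $(q, r)$ are independent, so the inner double integral collapses to $\mbE[f(t, pq, r) \mid s = \omega]$; integrating over $\omega$ recovers the pushforward of $\mu$ by $(t, pq, r)$, i.e.\ $[t, pq, r]$. Existence of aligned representatives for arbitrary $\lambda, \nu$ comes from a Rokhlin-type fibered product, gluing $\Omega_1$ of $\lambda$ to $\Omega_0$ of $\nu$ over their common marginal $\mu$ and then realizing the result on $\Omega$ via homogeneity of the measure algebra. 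Associativity is then proved by expanding both $(\lambda\nu)\rho$ and $\lambda(\nu\rho)$ into the single quadruple iterated integral of $f(\omega_0, xyz, \omega_1)$ by Fubini and the associativity of multiplication in $S$; it is enough to verify this on the algebra of product-type test functions, which is dense in $C(\Omega_0 \times S \times \Omega_1)$.

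The main obstacle is separate continuity. Fix $\nu$ and suppose $\lambda_n \to \lambda$ weakly in $\mcM(\Omega, S)$. For $f \in C(\Omega_0 \times S \times \Omega_1)$, set
$$F(\omega_0, x, \omega_1) \coloneqq \int f(\omega_0, xy, \omega_1')\, d\nu_{\omega_1}(y, \omega_1'),$$
so that $\int f\, d(\lambda_n \nu) = \int F\, d\lambda_n$. The function $F$ is continuous in $(\omega_0, x)$ for each fixed $\omega_1$ (by dominated convergence, using continuity of $f$ and separate continuity of multiplication in $S$), but only Borel measurable in $\omega_1$, which blocks the direct application of weak-$*$ convergence. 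The crucial leverage is that all $\lambda_n$ and $\lambda$ share the marginal $\mu$ on $\Omega_1$: for $g$ bounded Borel on $\Omega_1$ and $h \in C(\Omega_0 \times S)$, approximating $g$ in $L^1(\mu)$ by continuous $g_k$ and controlling the error via the common marginal as $\big|\int (g - g_k) h\, d\lambda_n\big| \le \|h\|_\infty \|g - g_k\|_{L^1(\mu)}$ yields $\int g\cdot h\, d\lambda_n \to \int g\cdot h\, d\lambda$. A density argument in the Bochner space $L^1(\Omega_1; C(\Omega_0 \times S))$, in which $F$ sits as a bounded element, extends this to $F$ and gives $\int F\, d\lambda_n \to \int F\, d\lambda$. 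Continuity of the product in the second argument with $\lambda$ fixed is symmetric, disintegrating $\lambda$ over $\Omega_0$ instead.
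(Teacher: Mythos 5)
The paper declares this verification routine and omits it entirely, so there is no argument of the paper to compare against; your write-up is a correct supply of the missing details. The one genuinely non-routine point is separate continuity, and you have isolated exactly the right mechanism: writing $\int f\,d(\lambda\nu)=\int F\,d\lambda$ with $F(\omega_0,x,\omega_1)$ continuous in $(\omega_0,x)$ (by dominated convergence and continuity of right translations in $S$) but only Borel in $\omega_1$, and then exploiting the fact that every element of $\mcM(\Omega,S)$ has $\Omega_1$-marginal equal to $\mu$, so that an $L^1(\mu;C(\Omega_0\times S))$-approximation of $F$ by continuous product-type functions gives an error bound \emph{uniform} in $n$; without the fixed marginal this step would fail. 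It is worth adding one sentence justifying that $\omega_1\mapsto F(\cdot,\cdot,\omega_1)$ is Bochner measurable into $C(\Omega_0\times S)$ (joint Borel measurability of $F$ plus Pettis' theorem, since $C(\Omega_0\times S)$ is separable), as this is what licenses the density of simple functions. The agreement of the integral formula with $[t,pq,r]$ via the relatively independent joining over the common marginal, and the associativity check, are correctly sketched and genuinely routine. One small slip: in the symmetric case (fixed $\lambda$, varying $\nu$) one still uses the disintegration of $\lambda$ over its $\Omega_1$-marginal --- that is the coordinate glued to $\nu$ --- with the uniform control now coming from the fixed $\Omega_0$-marginal of the $\nu_n$; your phrase ``disintegrating $\lambda$ over $\Omega_0$'' mislabels this, but the argument it describes goes through unchanged.
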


Moreover, if $S$ is a compactification of $G$ and the semigroup law of $S$ is compatible with the action of $G$ (i.e., if we have $gs=(gs_0)s$ for every $g\in G$, $s\in S$, where $s_0$ is the distinguished point of the compactification), then the law of $\mcM(\Omega,S)$ is compatible with the action of $G\wr\Omega$. In particular, if $R(G)$ is a semitopological semigroup compactification of $G$ (that is, if it admits a semitopological semigroup law compatible with the group law of $G$), then $R(G\wr\Omega)$ is a semitopological semigroup compactification of $G\wr\Omega$. Suppose that $G=\Aut(M)$ for an $\aleph_0$-categorical structure $M$. It follows from \cite{bentsa}, Theorem~5.5, that $M$ is \emph{stable} if and only if $R(G)$ is a semitopological semigroup compactification. Hence, from these two facts together we get a new proof (in the $\aleph_0$-categorical case) of the preservation of stability by randomizations: if $T$ is stable, then the randomized theory $T^R$ is stable (\cite[\textsection 4.2]{benOntheories}\cite[\textsection 5.3]{benkei}).

We can also recover the formula-by-formula version of this result. For the basic definitions of stability theory in the metric setting, see \cite[\textsection 7]{benusv10}; for us, however, it is more convenient to work with the dynamical translation of \cite{bentsa}. Let $W(G)$ be the \emph{WAP compactification} of $G$, which is the largest semitopological semigroup compactification of $G$. As per \cite[\textsection 5]{bentsa}, every \emph{stable formula} induces a function in $C(W(G))$ and, conversely, every function $\varphi\in C(W(G))$ is induced by a formula $\varphi(x,y)$ on the $\aleph_0$-categorical structure~$M$ (defined on a certain domain) which is moreover stable.

Now, given a continuous function $\varphi\in C(K)$, we can define an associated function $\mbE\llbracket\varphi\rrbracket\in C(\mcM(\Omega,K))$, given by $$\mbE\llbracket\varphi\rrbracket([s,k,r])=\int \varphi(k(\omega))d\omega.$$ We take $K=W(G)$. By our previous propositions, $\mcM(\Omega,W(G))$ is a semitopological semigroup compactification of $G\wr\Omega$, hence a factor of the largest one such, $W(G\wr\Omega)$. Thus, if $\varphi$ is a continuous function on the WAP compactification of $G$, then $\mbE\llbracket\varphi\rrbracket$ factors through the WAP compactification of $G\wr\Omega$.

Under the translation between formulas and functions, if $\varphi$ corresponds to a formula $\varphi(x,y)$, then the function $\mbE\llbracket\varphi\rrbracket$ corresponds to the formula $\mbE\llbracket\varphi(x,y)\rrbracket$. Hence, by the previous discussion, we recover (for $\aleph_0$-categorical theories) the strong form of the preservation of stability (\cite{benOntheories}, Theorem~4.9).

\begin{cor}\label{cor pres stability}
If $\varphi(x,y)$ is stable for $T$, then $\mbE\llbracket\varphi(x,y)\rrbracket$ is stable for $T^R$.
\end{cor}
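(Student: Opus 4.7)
The plan is to run the dictionary between stable formulas and continuous functions on WAP compactifications (for $\aleph_0$-categorical theories) on both sides of the wreath construction, using the operator $\varphi\mapsto\mbE\llbracket\varphi\rrbracket$ to bridge them. The strategy is essentially to package the observations made in the paragraph preceding the corollary into a formal argument, relying on the universal property of $W(G\wr\Omega)$.

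Concretely, I would proceed as follows. Since $T$ is $\aleph_0$-categorical, by \cite[\textsection 5]{bentsa} the stable formula $\varphi(x,y)$ is represented by a continuous function on $W(G)$, which (abusing notation) I still denote by $\varphi$. The operator $\varphi\mapsto\mbE\llbracket\varphi\rrbracket$ described above then yields an element of $C(\mcM(\Omega,W(G)))$. By the proposition just above, $\mcM(\Omega,W(G))$ is a semitopological semigroup compactification of $G\wr\Omega$, hence, by universality, a factor of $W(G\wr\Omega)$; pulling back along this factor map gives a continuous function on $W(G\wr\Omega)$. Invoking the dictionary in the reverse direction—now applicable because Theorem~\ref{thm G^R} and Corollary~\ref{separable categoricity is preserved} identify $\Aut(M^R)$ with $G\wr\Omega$ for the $\aleph_0$-categorical theory $T^R$—this continuous function comes from a stable formula on $M^R$.

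The remaining step, which is the main obstacle, is verifying that the stable formula of $T^R$ thus obtained is literally $\mbE\llbracket\varphi(x,y)\rrbracket$. Both this formula, as a predicate on tuples of random variables $x,y\in M^\Omega$, and the function $\mbE\llbracket\varphi\rrbracket$ on $\mcM(\Omega,W(G))$ are by construction given by integrating against $\mu$ the pointwise evaluation of $\varphi$ on random variables, so the identification amounts to tracing through the correspondence between type spaces over $M^R$ and the Markov randomization of the corresponding type spaces of $M$—this is the content of the sentence preceding the corollary, and once the coordinate identifications are made explicit the corollary follows.
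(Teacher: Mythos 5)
Your proposal follows exactly the paper's argument: represent the stable $\varphi$ as a function on $W(G)$, form $\mbE\llbracket\varphi\rrbracket$ on $\mcM(\Omega,W(G))$, use that this is a semitopological semigroup compactification of $G\wr\Omega$ and hence a factor of $W(G\wr\Omega)$, and then translate back to a stable formula of the $\aleph_0$-categorical theory $T^R$. The final identification you flag as the ``main obstacle'' is likewise handled in the paper only by asserting that under the formula--function dictionary $\mbE\llbracket\varphi\rrbracket$ corresponds to $\mbE\llbracket\varphi(x,y)\rrbracket$, so your treatment is at the same level of detail as the original.
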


The na\"ive converse of the previous fact is obvious: if $\varphi(x,y)$ is unstable, then $\mbE\llbracket\varphi(x,y)\rrbracket$ is unstable (with the order property witnessed even by constant random variables). However, one may ask for a more subtle converse: is every \emph{stable} formula $\Phi(x,y)$ in $T^R$ (say, with variables from the main sort) equivalent to a continuous combination of formulas of the form $\mbE\llbracket\varphi(x,y)\rrbracket$ for \emph{stable} formulas $\varphi(x,y)$? We prefer to pose the question in the following terms.

\begin{question} Do we have $W(G\wr\Omega)\simeq\mcM(\Omega,W(G))$ for every Roelcke precompact Polish group $G$?\end{question}

\begin{rem} The \emph{Bohr compactification} of $G\wr\Omega$, that is, the largest topological group compactification of $G\wr\Omega$, is trivial (i.e., a singleton). Indeed, the Bohr compactification of the automorphism group of an $\aleph_0$-categorical structure $N$ can be identified with the automorphism group of $\acl^N(\emptyset)$, the (imaginary) algebraic closure of the empty set in~$N$, as follows from \cite{benBohr} (see also \cite[\textsection 1.5]{iba14}). However, the algebraic closure of $\emptyset$ in $M^R$ is trivial (regardless of $M$), in the sense that it coincides with the definable closure of $\emptyset$, as follows from \cite{benOntheories}, Theorem~5.9.\end{rem}

\subsection{Hilbert-representability}

As mentioned above, if an $\aleph_0$-categorical structure $M$ is stable, then the Roelcke completion of its automorphism group, $R(G)$, is a compact semitopological semigroup, and vice versa. In that case, by a general result of Shtern \cite{shtern94}, $R(G)$ can be embedded (topologically and homomorphically) into the compact semitopological semigroup $$\Theta(V)=\{T\in L(V):\|T\|\leq 1\}$$ of linear contractions of a \emph{reflexive} Banach space $V$ (endowed with the weak operator topology). Thus, an interesting stronger property is satisfied if the space $V$ can be chosen to be a Hilbert space.

\begin{defin} A semitopological semigroup $S$ is \emph{Hilbert-representable} if it can be embedded into $\Theta(\mcH)$ for a Hilbert space $\mcH$.\end{defin}

For the case of $R(G)$, this property is therefore a strengthening of stability, and has been investigated as such in \cite{bit15}. We showed there that, for a \emph{classical} $\aleph_0$-categorical structure $M$, $R(G)$ is a Hilbert-representable semitopological semigroup if and only if $M$ is stable and one-based (equivalently, $\aleph_0$-stable). It is unclear how to generalize this for metric structures; we will come back to this discussion in Section~\ref{s:beautiful}. Here, we show that this property is preserved under randomizations.

Given a Hilbert space $\mcH$, we denote by $\mcH^{\otimes n}$ the $n$-fold tensor product $\mcH\otimes\dots\otimes\mcH$ of Hilbert spaces. Also, we write $$\mcH^\otimes\coloneqq\bigoplus_{n\in\mbN}\mcH^{\otimes n}$$ for the direct sum of all the $n$-fold tensor self-products of $\mcH$. We recall that every linear contraction of $\mcH$ acts naturally as a linear contraction on each $\mcH^{\otimes n}$ (satisfying the identity $T(u_1\otimes\dots\otimes u_n)=Tu_1\otimes\dots\otimes Tu_n$), and hence also on the direct sum $\mcH^\otimes$. That is, we have an inclusion of semitopological semigroups, $\Theta(\mcH)<\Theta(\mcH^\otimes)$.

\begin{theorem}
Let $S$ be a compact metrizable semitopological semigroup. If $S$ is Hilbert-representable, then so is $\mcM(\Omega,S)$.
\end{theorem}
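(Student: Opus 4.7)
The plan is to realize $\mcM(\Omega, S)$ concretely as a sub-semigroup of $\Theta(\mcK)$ with $\mcK = L^2(\Omega, \mcH)$, the Bochner space of Subsection~\ref{ss:Bochner}, associated with a fixed semigroup embedding $\rho\colon S \hookrightarrow \Theta(\mcH)$. To each $\lambda \in \mcM(\Omega, S)$ I attach the operator $T_\lambda \in L(\mcK)$ defined by its matrix coefficients: for $f, g \in \mcK$,
\[
\langle T_\lambda f, g \rangle_\mcK \;:=\; \int_{\Omega_0 \times S \times \Omega_1} \bigl\langle \rho(x) f(\omega_1),\, g(\omega_0) \bigr\rangle_\mcH \; d\lambda(\omega_0, x, \omega_1).
\]
Joint measurability of the integrand is standard given the WOT-continuity of $\rho$ and the separability of $\mcH$. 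Since $\|\rho(x)\| \le 1$, the integrand is pointwise bounded by $\|f(\omega_1)\|\,\|g(\omega_0)\|$; an application of Cauchy--Schwarz on $L^2(\lambda)$ together with the marginal conditions $\lambda|_{\Omega_i}=\mu$ yields $|\langle T_\lambda f, g \rangle| \le \|f\|_2\|g\|_2$, so $T_\lambda$ lies in $\Theta(\mcK)$.

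Three points then remain. For the homomorphism property $T_{\lambda\nu} = T_\lambda T_\nu$: expanding $\langle T_\nu f, T_\lambda^* g \rangle$ via the formulas $T_\nu f(\omega) = \int \rho(y) f(\omega_1)\, d\nu_\omega(y, \omega_1)$ and $T_\lambda^* g(\omega) = \int \rho(x)^* g(\omega_0)\, d\lambda^\omega(\omega_0, x)$ (where $\lambda^\omega$ and $\nu_\omega$ are the very disintegrations used to define the product on $\mcM(\Omega, S)$) and regrouping by Fubini produces exactly the iterated integral
\[
\iiint \bigl\langle \rho(xy) f(\omega_1),\, g(\omega_0)\bigr\rangle\, d\lambda^\omega(\omega_0, x)\, d\nu_\omega(y, \omega_1)\, d\omega,
\]
which by the definition of $\lambda\nu$ and the multiplicativity $\rho(xy) = \rho(x)\rho(y)$ equals $\langle T_{\lambda\nu} f, g \rangle$. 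For continuity of $\lambda \mapsto T_\lambda$ from the weak$^*$ topology to WOT: on pure-tensor test vectors $f = h \otimes v$, $g = k \otimes w$ with $h, k \in C(\Omega)$ and $v, w \in \mcH$, the integrand $h(\omega_1)\overline{k(\omega_0)}\langle \rho(x) v, w\rangle$ is a bounded continuous function on the compact space $\Omega_0 \times S \times \Omega_1$, so $\lambda_n \to \lambda$ gives $\langle T_{\lambda_n} f, g \rangle \to \langle T_\lambda f, g \rangle$; this extends to all of $\mcK$ by density and the uniform bound $\|T_\lambda\| \le 1$. For injectivity: the functions $h(\omega_0)\overline{k(\omega_1)}\langle \rho(x) v, w \rangle$ generate a point-separating subalgebra of $C(\Omega_0 \times S \times \Omega_1)$ --- using that $\rho$ is an embedding, so the $\langle \rho(\cdot) v, w \rangle$ separate points of $S$ --- hence one dense by Stone--Weierstrass; if $T_\lambda = T_{\lambda'}$ the corresponding integrals coincide and $\lambda = \lambda'$ follows.

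Compactness of $\mcM(\Omega, S)$ then upgrades the continuous injective homomorphism $\Phi\colon \lambda \mapsto T_\lambda$ to a topological embedding of semitopological semigroups onto a closed subsemigroup of $\Theta(\mcK)$, proving Hilbert-representability. I expect the main technical nuisance to be the homomorphism verification, where the two disintegrations (of $\lambda$ over $\Omega_1$ and of $\nu$ over $\Omega_0$) have to be coupled simultaneously over the common ``gluing'' copy of $\Omega$, together with a brief justification of measurability for the various random operators. The tensor-power space $\mcH^\otimes$ appearing just before the statement plays no essential role in this approach --- it serves merely to provide a single ambient Hilbert space in which images of several $\rho$'s can be embedded at once (since $\Theta(\mcH) < \Theta(\mcH^\otimes)$); the entire construction above stays inside $L^2(\Omega, \mcH)$.
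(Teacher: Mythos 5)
Your construction is the same as the paper's --- the same matrix-coefficient formula for the operator, the same contraction estimate, the same disintegration argument for multiplicativity, and continuity via continuous test vectors --- with one crucial difference: you work on $L^2(\Omega,\mcH)$ and declare the tensor-power space $\mcH^\otimes$ inessential. This is exactly where the argument breaks. Equality $T_\lambda=T_{\lambda'}$ only tells you that the integrals of the \emph{generators} $h(\omega_0)\overline{k(\omega_1)}\langle\rho(x)v,w\rangle$ agree. Stone--Weierstrass gives density of the \emph{algebra} these generate, but a measure is a linear functional, not a multiplicative one: agreement on the generators says nothing about agreement on their products, and the linear span of the generators alone need not be dense (the span of the matrix coefficients $\langle\rho(\cdot)v,w\rangle$ is a point-separating linear subspace of $C(S)$, not a subalgebra). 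Concretely, take $S=[0,1]$ under multiplication, embedded in $\Theta(\mathbb{C})$ as multiplication operators, so $\mcK=L^2(\Omega)$ and $\langle T_\lambda f,g\rangle=\int x\,f(\omega_1)\overline{g(\omega_0)}\,d\lambda$. This only sees the conditional first moment of $x$ given $(\omega_0,\omega_1)$. Two couplings with the same $(\omega_0,\omega_1)$-marginal, one placing $x$ deterministically at $1/2$ and the other placing it at $0$ or $1$ with equal probability, give $T_\lambda=T_{\lambda'}$ with $\lambda\neq\lambda'$. So your map is genuinely non-injective in general.

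The paper's use of $\mcH^\otimes=\bigoplus_n\mcH^{\otimes n}$ is precisely the repair: a product of matrix coefficients $\prod_i\langle u_0^i,\beta(x)u_1^i\rangle$ equals the single matrix coefficient $\langle\otimes_i u_0^i,\beta(x)(\otimes_i u_1^i)\rangle$ on a tensor power, so every element of the unital algebra $A$ dense in $C(S)$ is again of the form $\langle w_0,\beta(x)w_1\rangle$ with $w_0,w_1\in\mcH^\otimes$. Testing $T_\lambda$ against the vectors $f_i(\omega_i)=e_i(\omega_i)w_i$ then controls $\int e_0he_1\,d\lambda$ for all $h\in A$ and $e_0,e_1\in C(\Omega)$, a dense family in $C(\Omega_0\times S\times\Omega_1)$, and injectivity follows. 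Everything else in your write-up is sound; you only need to replace $\mcH$ by $\mcH^\otimes$ throughout (the embedding $\Theta(\mcH)<\Theta(\mcH^\otimes)$ makes this costless) and rerun your injectivity paragraph with the observation above.
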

\begin{proof}
Let $\beta\colon S\to\Theta(\mcH)$ be an embedding into the semigroup of contractions of a Hilbert space $\mcH$, which we can also see as an embedding $\beta\colon S\to\Theta(\mcH^\otimes)$. We consider the map $$\beta^R\colon\mcM(\Omega,S)\to\Theta(L^2(\Omega,\mcH^{\otimes}))$$ defined implicitly by the inner product $$\langle f_0,\beta^R(\lambda)f_1\rangle=\int\langle f_0(\omega_0),\beta(x)f_1(\omega_1)\rangle d\lambda(\omega_0,x,\omega_1),$$ where $\lambda\in\mcM(\Omega,S)$ and $f_0,f_1\in L^2(\Omega,\mcH^\otimes)$. It is checked easily that $\beta^R(\lambda)$ is a linear contraction of $L^2(\Omega,\mcH^{\otimes})$ for every $\lambda$. Besides, if the functions $f_0,f_1\colon\Omega\to\mcH^\otimes$ are continuous, then the function $(\omega_0,x,\omega_1)\mapsto\langle f_0(\omega_0),\beta(x)f_1(\omega_1)\rangle$ is continuous; hence, if $\lambda_n$ converge to~$\lambda$, then the integrals $\langle f_0,\beta^R(\lambda_n)f_1\rangle$ converge to $\langle f_0,\beta^R(\lambda)f_1\rangle$. If $f_0,f_1$ are not continuous, we can approximate them in norm by continuous functions $f_0',f_1'$; in particular, the inner product $\langle f_0,\beta^R(\lambda)f_1\rangle$ is approximated by $\langle f_0',\beta^R(\lambda)f_1'\rangle$, uniformly on $\lambda$. We see, thence, that $\beta^R$ is continuous.

We now check that $\beta^R$ is a homomorphism. Let $\lambda,\nu\in\mcM(\Omega,S)$. We note that, for almost every $\omega$, $\beta^R(\nu)f_1(\omega)$ equals the vector-valued integral $\int\beta(y)f_1(\omega_1)d\nu_{\omega}(y,\omega_1)$. Hence,
\begin{align*}
\langle f_0,\beta^R(\lambda)\beta^R(\nu)f_1\rangle & =\iint\langle f_0(\omega_0),\beta(x)(\beta^R(\nu)f_1(\omega))\rangle d\lambda^\omega(\omega_0,x)d\omega \\
& =\iiint\langle f_0(\omega_0),\beta(x)\beta(y)f_1(\omega_1))\rangle d\nu_\omega(y,\omega_1)d\lambda^\omega(\omega_0,x)d\omega \\
& =\langle f_0,\beta^R(\lambda\nu)f_1\rangle.
\end{align*}

Since $\mcM(\Omega,S)$ is compact, we are only left to show that $\beta^R$ is injective. Since $\beta$ is an embedding, the continuous functions $x\mapsto\langle u_0,\beta(x)u_1\rangle$ for $u_0,u_1\in\mcH$ separate points of~$S$. Hence, by the Stone--Weierstrass theorem, the unital algebra $A$ generated by them is dense in $C(S)$. The key observation then is that a function $h\in A$ is always of the form $h(x)=\langle w_0,\beta(x)w_1\rangle$ for appropriate vectors $w_0,w_1\in\mcH^\otimes$. Now, if $h$ is one such function and we are given $e_0,e_1\in C(\Omega)$, we consider $f_0,f_1\in L^2(\Omega,\mcH^\otimes)$ defined by $f_0(\omega_0)=e_0(\omega_0)w_0$, $f_1(\omega_1)=e_1(\omega_1)w_1$. Then, if $\beta^R(\lambda)=\beta^R(\nu)$, the identity $\langle f_0,\beta^R(\lambda)f_1\rangle=\langle f_0,\beta^R(\nu)f_1\rangle$ becomes $$\int e_0he_1d\lambda=\int e_0he_1d\nu.$$ Since this holds for arbitrary $e_0,e_1\in C(\Omega)$, $h\in A$, it follows that $\lambda=\nu$.
\end{proof}

As a particular case we obtain the following.

\begin{cor}
If $G=\Aut(M)$ for a classical $\aleph_0$-categorical $\aleph_0$-stable structure $M$, then $R(G\wr\Omega)$ is a Hilbert-representable semitopological semigroup.
\end{cor}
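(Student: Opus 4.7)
The plan is to chain together three ingredients already developed in the paper. First, as recalled in the paragraph introducing Hilbert-representability, the main result of \cite{bit15} gives that for a classical $\aleph_0$-categorical $\aleph_0$-stable structure $M$, the Roelcke compactification $R(G)$ of $G=\Aut(M)$ is a Hilbert-representable semitopological semigroup. Second, apply the preceding theorem with $S=R(G)$ to conclude that $\mcM(\Omega,R(G))$ is Hilbert-representable as well. Third, invoke the identification $R(G\wr\Omega)\simeq\mcM(\Omega,R(G))$ established earlier in this section to transfer the Hilbert representation back to $R(G\wr\Omega)$.

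The only step requiring a word of care is the third one, since the earlier theorem is stated as an isomorphism of \emph{compactifications} of $G\wr\Omega$, whereas Hilbert-representability is a condition on the semigroup structure. One therefore needs to know that the same equivariant homeomorphism also preserves the two semigroup laws. This is a routine uniqueness argument: by the discussion leading to Corollary~\ref{cor pres stability}, the fact that $R(G)$ is a semitopological semigroup compactification of $G$ implies that $\mcM(\Omega,R(G))$ is a semitopological semigroup compactification of $G\wr\Omega$. Transporting this structure through the homeomorphism gives a separately continuous multiplication on $R(G\wr\Omega)$ extending the group law of $G\wr\Omega$, and any such multiplication is unique: for fixed $a\in G\wr\Omega$ the map $b\mapsto ab$ is continuous on $R(G\wr\Omega)$ and is already determined on the dense subset $G\wr\Omega$, so the right multiplications by elements of $G\wr\Omega$ are forced; by separate continuity the left multiplications by arbitrary $b\in R(G\wr\Omega)$ are then forced as well.

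I do not expect a genuine obstacle here: the content of the corollary is concentrated in the preservation theorem just proved, together with the cited results from \cite{bit15} and \cite{bentsa}. The only subtlety, as indicated above, is the compatibility of the compactification isomorphism with the semigroup structures, and that is handled by the standard density argument just sketched.
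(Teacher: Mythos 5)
Your proof follows exactly the paper's route: the paper's own proof is the one-line ``Follows from the previous results together with \cite{bit15}, Corollary~3.12,'' i.e.\ precisely the chain $R(G)$ Hilbert-representable $\Rightarrow$ $\mcM(\Omega,R(G))$ Hilbert-representable $\Rightarrow$ $R(G\wr\Omega)$ Hilbert-representable. Your additional density/uniqueness argument showing that the compactification isomorphism respects the semigroup laws is correct and fills in a detail the paper leaves implicit, but it does not constitute a different approach.
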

\begin{proof}
Follows from the previous results together with \cite{bit15}, Corollary~3.12.
\end{proof}

For any topological group $G$, there is always a largest Hilbert-representable semitopological semigroup compactification of $G$, which we denote by $H(G)$.

\begin{question} Is it $H(G\wr\Omega)\simeq\mcM(\Omega,H(G))$ for every Roelcke precompact Polish group~$G$?\end{question}

\subsection{A general preservation result}\label{ss:a general pres res} We end this section with a general preservation result about Banach representations of randomized type spaces (Theorem~\ref{thm general preservation} below). Modulo some additional theory, this is indeed a generalization of the preservation results discussed above, concerning stability and Hilbert-representability. Moreover, it allows us to recover, in the $\aleph_0$-categorical case, the main result of \cite{benVapChe} of preservation of NIP formulas and theories.

In the previous subsections, we have considered a particular way of randomizing $G$-flows or, more generally, some interesting compact spaces. From a model-theoretic point of view, the main compact spaces associated with a structure $M$ are its type spaces. In particular, the space $S(M)$ of complete types with parameters from $M$ captures a large amount of model-theoretic information about the structure (and, in some cases, even about its theory). With the natural map $M\to S(M)$, this type space is a compactification of $M$ in which $M$ embeds. Thus, the natural randomized object to consider in this context is the compactification $M^R\to S(M^R)$, which we describe below.

Given a compact metrizable space $K$, we will consider the subspace of $L^2_{w^*}(\Omega,C(K)^*)$ (with the weak$^*$ topology) consisting of those elements $p$ that take values in the Borel probability measures on $K$. We observe that we have a homeomorphism $$\{p\in L^2_{w^*}(\Omega,C(K)^*):p(\omega)\in\Reg(K)\ \mu\text{-a.e.}\}\simeq\{\lambda\in\Reg(\Omega\times K):\lambda|_\Omega=\mu\},$$ where each $p$ corresponds to the measure $\lambda$ that can be disintegrated as $\lambda_\omega=p(\omega)$ almost everywhere. For convenience, we introduce a notation for this space.

\begin{defin} For a compact metrizable space $K$ we define $$\mcS(\Omega,K)\coloneqq\{\lambda\in\Reg(\Omega\times K):\lambda|_\Omega=\mu\},$$ which we may identify with $\{p\in L^2_{w^*}(\Omega,C(K)^*):p(\omega)\in\Reg(K)\ \mu\text{-a.e.}\}$ when convenient.\end{defin}

If $G\actson K$ is a continuous action, then we have an induced continuous action ${G\wr\Omega}\actson\mcS(\Omega,K)$. Indeed, observe first that $G$ acts continuously by isometries on $C(K)$, by $(gf)(x)=f(g^{-1}x)$ for $f\in C(K)$, $g\in G$ and $x\in K$. Hence we have an induced action $G\wr\Omega\actson L^2_{w^*}(\Omega,C(K)^*)$ (as per \textsection\ref{ss:Bochner}), which restricts to a continuous action on $\mcS(\Omega,K)$.

Now, fix any separable metric structure $M$ in a countable language $L$. Given a set $\Delta$ of $L$-formulas $\varphi(x,y)$, we let $S_\Delta(M)$ be the space of quantifier-free $\Delta$-types in the variable $x$ with parameters from $M$, which is a compact metrizable space. The value of a type $q\in S_x(M)$ on a formula $\varphi(x,b)$ is denoted by $\varphi(x,b)^q$ (this is a real number in $[0,1]$). If $G=\Aut(M)$, then $G$ acts continuously on $S_\Delta(M)$ by the relation $\varphi(x,b)^{gp}=\varphi(x,g^{-1}b)^p$.

In addition, we let $\Delta^R$ be the set of $L^R$-formulas of the form $\mbE\tau(x,y,z)$ where $x,y$ are tuples of variables from the main sort, $z$ is a tuple of variables from the auxiliary sort, and $\tau(x,y,z)$ is a term on $xyz$ built upon the formulas of $\Delta$. More precisely, $\tau(x,y,z)$ is constructed by applying any operations of the auxiliary sort to any variables from $z$ and to any term of the form $\llbracket\varphi(x,y)\rrbracket$ for $\varphi\in\Delta$. Then, we can consider the space $S_{\Delta^R}(M^R)$ of quantifier-free $\Delta^R$-types in the variable~$x$ (thus, from the main sort) with parameters from $M^R$, and the corresponding action $G\wr\Omega\actson S_{\Delta^R}(M^R)$.

\begin{rem} Let $\tau(x,y,z)$ be a term as above. Then, if we substitute $y$ by a tuple $b$ from $M$, and we substitute $z$ by a tuple of real numbers $c$, then $\tau(x,b,c)$ can be interpreted naturally as an $L$-formula with parameters from $b$, which is moreover obtained by a combination of formulas $\varphi(x,b)$ for $\varphi(x,y)\in\Delta$. In particular, for $q\in S_\Delta(M)$ the value $\tau(x,b,c)^q$ is defined, and this induces a continuous function $\tau(x,b,c)\colon S_\Delta(M)\to [0,1]$.\end{rem}

In the rest of the paper, for simplicity of notation, given $f\in C(K)$ and $\nu\in\Reg(K)$, we may denote the expected value $\int fd\nu$ by $\mbE^\nu(f)$.

\begin{lem}\label{lem types as random variables}
For any set of $L$-formulas $\Delta$, we have a $G\wr\Omega$-equivariant homeomorphism $S_{\Delta^R}(M^R)\simeq\mcS(\Omega,S_\Delta(M))$.

Under this identification, a type $p\in S_\Delta(M^R)$ can be seen as a random variable with values in $\Reg(S_\Delta(M))$, and such that $$\mbE\tau(x,r,s)^p=\int\mbE^{p(\omega)}(\tau(x,r(\omega),s(\omega))d\omega$$ for every $\mbE\tau(x,y,z)\in\Delta^R$, $r\in (M^\Omega)^{|y|}$ and $s\in\mbA^{|z|}$.
\end{lem}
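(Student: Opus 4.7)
The plan is to define a map $\Psi\colon\mcS(\Omega,S_\Delta(M))\to S_{\Delta^R}(M^R)$ by $\Psi(\lambda)=p_\lambda$ via the displayed formula, and verify in order that $p_\lambda$ is a genuine $\Delta^R$-type, and that $\Psi$ is continuous, injective, surjective and $G\wr\Omega$-equivariant. Since $\mcS(\Omega,S_\Delta(M))$ is compact (being closed in $\Reg(\Omega\times S_\Delta(M))$), a continuous bijection will automatically be a homeomorphism.

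For well-definedness, I would realize $p_\lambda$ in an elementary extension. Fix a sufficiently saturated $N\succeq M$. Measurable-selection arguments in the spirit of Lemma~\ref{Borel selector} (and the proof of Lemma~\ref{quotients of randomized compact sets}) supply a Borel section $\sigma\colon S_\Delta(M)\to N$ of the type map $b\mapsto\tp^\Delta(b/M)$, and a jointly measurable map $F\colon\Omega\times[0,1]\to S_\Delta(M)$ with $F(\omega,\cdot)_*\mu=\lambda_\omega$ for a.e.\ $\omega$. Then $a(\omega,t)\coloneqq\sigma(F(\omega,t))$ lies in $N^{\Omega\times[0,1]}\simeq(N^{[0,1]})^\Omega$, which elementarily extends $M^R=M^\Omega$ through the projection $\Omega\times[0,1]\to\Omega$ (using $M\preceq N$ and the fact that randomization preserves elementary equivalence). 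A Fubini computation yields $\mbE\tau(a,r,s)=\int\mbE^{\lambda_\omega}(\tau(x,r(\omega),s(\omega)))\,d\omega$, so $a$ realizes $p_\lambda$.

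Continuity of $\Psi$ follows from rewriting $\mbE\tau(x,r,s)^{p_\lambda}=\int F_{r,s,\tau}\,d\lambda$ for the bounded measurable $F_{r,s,\tau}(\omega,q)\coloneqq\tau(x,r(\omega),s(\omega))^q$; approximating $r$ and $s$ by simple random variables writes $F_{r,s,\tau}$ as a uniform-in-$\lambda$ $L^1$-limit of continuous functions on $\Omega\times S_\Delta(M)$, so weak$^*$ convergence of $\lambda_n\to\lambda$ in $\mcS(\Omega,S_\Delta(M))\subset L^2_{w^*}(\Omega,C(S_\Delta(M))^*)$ (cf.~\textsection\ref{ss:Bochner}) delivers the required convergence of values. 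For injectivity, specialize the formula to the term $z\cdot\llbracket\varphi(x,y)\rrbracket$ with $z=\chi_A\in\mbA$, $\varphi\in\Delta$ and $y=b\in M$: if $p_\lambda=p_\nu$, then $\int_A\mbE^{\lambda_\omega}(\varphi(x,b))\,d\omega=\int_A\mbE^{\nu_\omega}(\varphi(x,b))\,d\omega$ for every measurable $A$, so $\mbE^{\lambda_\omega}(\varphi(x,b))=\mbE^{\nu_\omega}(\varphi(x,b))$ a.e., and running over a countable dense set of $(\varphi,b)$ (by separability of $L$ and $M$) forces $\lambda_\omega=\nu_\omega$ a.e.

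For surjectivity I would use Riesz representation. Given $p\in S_{\Delta^R}(M^R)$, define a functional $\Lambda_p$ on the subalgebra of $C(\Omega\times S_\Delta(M))$ generated by products $f(\omega)\cdot g(q)$ with $f\in\mbA$ and $g(q)=\tau(x,b,c)^q$ coming from an auxiliary-sort term $\tau(x,y,z)$, $b\in M$ and $c\in[0,1]$, by $\Lambda_p(f\cdot g)\coloneqq\mbE(f\cdot\tau(x,b,c))^p$. The subalgebra separates points of $\Omega\times S_\Delta(M)$ and is uniformly dense by Stone--Weierstrass; $\Lambda_p$ is contractive and unital, so it extends to a state on $C(\Omega\times S_\Delta(M))$, and Riesz representation yields a probability measure $\lambda$ with marginal $\mu$ on $\Omega$ satisfying $\Psi(\lambda)=p$. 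The $G\wr\Omega$-equivariance is then a direct verification from the two action formulas. The main obstacle I expect is establishing well-definedness of $\Lambda_p$ on the tensor subalgebra: distinct representations of the same function must give the same value, which reduces to checking that the algebraic and continuous relations among auxiliary-sort terms forced by the $T^R$-axioms (together with quantifier elimination) are respected by $\tau\mapsto\mbE\tau^p$, which is essentially the content of $p$ being a $\Delta^R$-type.
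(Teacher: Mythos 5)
Your overall architecture (a continuous bijection from the compact space $\mcS(\Omega,S_\Delta(M))$, hence automatically a homeomorphism) matches the paper's, and your continuity, injectivity and equivariance arguments are essentially the ones used there. However, the step where you show that $p_\lambda$ is actually a type has a genuine gap. You propose to realize $p_\lambda$ by $a=\sigma\circ F$, where $\sigma\colon S_\Delta(M)\to N^{|x|}$ is a Borel section of the type map into a saturated $N\succeq M$. The problem is that $N$ is in general non-separable, and the set of realizations $\sigma(S_\Delta(M))$ need not be essentially separable: for instance, if $T$ is the theory of infinitely many independent unary predicates and $\Delta$ consists of those predicates, then $S_\Delta(M)$ contains a copy of $2^{\mbN}$ of non-realized types whose realizations in $N$ are pairwise at distance $1$. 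If $\lambda_\omega$ is atomless on such a set, then $\sigma\circ F$ is not essentially separably valued, hence is not an element of $L^0(\Omega\times[0,1],N)$, i.e., not an element of the Borel randomization $N^{\Omega\times[0,1]}$ at all, and the Fubini computation has nothing to apply to. (This is precisely why saturated models of $T^R$ are not simply Borel randomizations of saturated models of $T$.) The repair is to prove approximate finite satisfiability directly: the paper writes $\lambda$ as the class of a pair $(t,k)\in\End(\Omega)\times S_\Delta(M)^\Omega$ modulo the $\Aut(\Omega)$-action and observes that pairs with $t\in\Aut(\Omega)$ and $k$ of finite range in the realized types are dense; hence $p_\lambda$ is a pointwise limit of realized types and therefore a type. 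The same observation yields surjectivity immediately, since the image of the map is compact and contains the realized types, which are dense in $S_{\Delta^R}(M^R)$ --- no Riesz representation is needed.

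Your Riesz-representation route to surjectivity is workable in principle, and you correctly identify well-definedness of $\Lambda_p$ on the tensor algebra as the delicate point (it does follow from approximate finite satisfiability of $p$). But two further points would have to be supplied: the functions $f$ must be taken in $C(\Omega)$ for Stone--Weierstrass to apply in $C(\Omega\times S_\Delta(M))$ (elements of $\mbA$ are merely measurable, so an extra $L^1$-approximation layer is needed), and the conclusion $\Psi(\lambda)=p$ requires passing from terms with constant main-sort parameters $b\in M$ to terms with arbitrary parameters $r\in (M^\Omega)^{|y|}$, via density of simple random variables and uniform continuity of formulas in their parameters. All of this overhead is avoided by the density-of-realized-types argument.
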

\begin{proof}
Let us denote $K=S_\Delta(M)$. For each measure $\lambda\in\mcS(\Omega,K)$ we define a type $p_\lambda\in S_{\Delta^R}(M^R)$ by setting the value of $p_\lambda$ on a formula $\mbE\tau(x,r,s)$ (as in the statement) to be $$\mbE\tau(x,r,s)^{p_\lambda}\coloneqq\int\tau(x,r(\omega),s(\omega))^qd\lambda(\omega,q)= \int\mbE^{\lambda_\omega}(\tau(x,r(\omega),s(\omega)))d\omega.$$
To see that this defines a type, we may write $\lambda$ as the class of a pair $(t,k)$ in the quotient $(\End(\Omega)\times K^\Omega)\sslash\Aut(\Omega)\simeq\mcS(\Omega,K)$. Suppose first that $t$ is actually in $\Aut(\Omega)$ and that $k$ takes values in a finite set of realized types of $S_\Delta(M)$, so that we may write $k(\omega)=\tp_\Delta(k'(\omega))$ for some $k'\in (M^\Omega)^{|x|}$. Then $p_\lambda$ is a realized type, namely $p_\lambda=\tp_{\Delta^R}(k't^{-1})$. In the general case, $(t,k)$ is a limit of pairs of the previous form, which readily implies that $p_\lambda$ is approximately finitely satisfiable, i.e., a type.

The map $f\colon\omega\mapsto\tau(x,r(\omega),s(\omega))$ is in $L^2(\Omega,C(K))$, and we have $\mbE\tau(x,r,s)^{p_\lambda}=\langle f,\lambda\rangle$. Thus, the map $\theta\colon\lambda\mapsto p_\lambda$ is clearly continuous. By representing measures as we did in the previous paragraph, it is also clear that every realized type in $S_{\Delta^R}(M^R)$ is in the image of $\theta$, hence that $\theta$ is surjective.

Checking that $\theta$ is $G\wr\Omega$-equivariant is a straightforward verification. Finally, if $\lambda\neq\nu$, then there are a set $A\subset\Omega$ and a function $h\in C(K)$ such that $\mbE^\lambda(\chi_Ah)\neq\mbE^\nu(\chi_Ah)$. Now, $h$ is a continuous combination of functions induced by formulas $\varphi(x,b)$ for $\varphi(x,y)\in\Delta$ and $b\in M^{|y|}$. It follows that $\chi_A(\omega)h(q)=\tau(x,r(\omega),s(\omega))^q$ for some appropriate term $\tau$, where we can choose $s=\chi_A$ and $r$ to be a tuple of constant random variables. Hence, $p_\lambda$ and $p_\nu$ differ in the formula $\mbE\tau(x,r,s)$, so we conclude that $\theta$ is injective.
\end{proof}

Next we recall some notions from the theory of Banach representations of dynamical systems as developed by Glasner and Megrelishvili; see, for instance, the survey paper \cite{glamegSurvey}. A \emph{representation} of a (compact, Hausdorff) flow $G\actson X$ on a Banach space $V$ is given by an isometric continuous action $G\actson V$ together with a weak$^*$-continuous map $$\alpha:X\to V^*$$ that is $G$-equivariant with respect to the dual action $G\actson V^*$. The representation is \emph{faithful} if $\alpha$ is injective. If the representation is faithful and $\mcK$ is any class of Banach space containing $V$, then the flow is said \emph{$\mcK$-representable}.

We introduce in addition the following definitions.

\begin{defin} Let $X$ be a $G$-flow and $\mcK$ a class of Banach spaces. We say that $f\in C(X)$ is \emph{$\mcK$-vector-representable} if there are a representation $\alpha$ of $G\actson X$ on a Banach space $V\in\mcK$ and a vector $v\in V$ such that, for all $x\in X$, $$f(x)=\langle v,\alpha(x)\rangle.$$ We denote the family of all $\mcK$-vector-representable continuous functions on $X$ by $B_\mcK(X)$.\end{defin}

We remark that if the class $\mcK$ is closed under Banach subspaces and $G$ is separable, then in the previous definition we can always assume that $V$ is separable. Indeed, it suffices to replace $V$ by the closed subspace generated by $Gv$.

\begin{defin}\label{def:R-closed} Let $\mcK$ be a class of Banach spaces closed under isomorphisms and subspaces. We say that $\mcK$ is \emph{$R$-closed} if, in addition, the following conditions hold.
\begin{enumerate}
\item If $V\in\mcK$, then $L^2(\Omega,V)\in\mcK$.
\item If $X$ is a $\mcK$-representable $G$-flow, then $B_\mcK(X)$ is dense in $C(X)$.
\end{enumerate}
\end{defin}

The main classes of Banach spaces considered in \cite{glamegSurvey}, and in the related works of the same authors, are $R$-closed. Following \cite{glamegSurvey}, we say that a Banach space is \emph{Rosenthal} if it does not contain an isomorphic copy of $\ell^1$.

\begin{lem}
The classes of Hilbert, reflexive, Asplund and Rosenthal Banach spaces are $R$-closed.
\end{lem}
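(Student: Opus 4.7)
The isomorphism- and subspace-closure requirement is immediate from the standard definitions for each of the four classes, so the plan will focus on conditions (1) and (2) of Definition~\ref{def:R-closed}.

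To verify (1), that $V\in\mcK$ implies $L^2(\Omega,V)\in\mcK$, I would handle each class separately by citing the appropriate Bochner-space preservation result. The Hilbert case is tautological via the canonical inner product on $L^2(\Omega,\mcH)$. Reflexivity of $L^2(\Omega,V)$ when $V$ is reflexive is a standard Bochner-space fact. The Asplund property is preserved under $V\mapsto L^p(\Omega,V)$ by classical results (Edgar, Sundaresan and others). The Rosenthal property, namely that $L^2(\Omega,V)$ contains no isomorphic copy of $\ell^1$ whenever $V$ does not, is a theorem of Bourgain and Pisier.

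To verify (2), the plan is to identify $B_\mcK(X)$ with a familiar closed, unital subalgebra of $C(X)$ whose role for $\mcK$-representable flows is already established in the Glasner--Megrelishvili framework: a Fourier--Stieltjes-type algebra for $\mcK=\Hilb$, the algebra $\WAP(X)$ for $\mcK$ reflexive, $\Asp(X)$ for $\mcK$ Asplund, and $\Tame(X)$ for $\mcK$ Rosenthal (see \cite{glamegSurvey}). Granting these identifications, the rest is a one-line application of Stone--Weierstrass. Given a faithful $\mcK$-representation $\alpha\colon X\to V^*$, the matrix coefficients $x\mapsto\langle v,\alpha(x)\rangle$, $v\in V$, lie in $B_\mcK(X)$ and separate points of $X$; hence $B_\mcK(X)$ is a closed, unital, point-separating subalgebra of $C(X)$ and equals $C(X)$, so density is automatic.

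The hard part will be checking that $B_\mcK(X)$ is indeed closed under multiplication, and thereby coincides with the subalgebra claimed above. For $\mcK=\Hilb$ this works directly: a product of two matrix coefficients of $V$ is a matrix coefficient of $V\otimes V\in\Hilb$ under the Hilbert tensor product. For the other three classes no algebraic tensor product need remain in $\mcK$, so one must instead rely on the intrinsic characterizations of $\WAP$, $\Asp$, and $\Tame$ as closed subalgebras (via Grothendieck's double-limit criterion and its analogues), which is where the substantive work is concentrated.
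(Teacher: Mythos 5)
Your proposal is correct and follows essentially the same route as the paper: condition (1) is dispatched by the same classical Bochner-space preservation facts (the paper cites Pisier for the Rosenthal case and the dual identification $L^2(\Omega,V)^*\simeq L^2(\Omega,V^*)$ for Asplund/reflexive), and condition (2) rests on the Glasner--Megrelishvili identifications of the representable function classes for the three non-Hilbert cases, with the tensor-product plus Stone--Weierstrass argument reserved for the Hilbert case exactly as in the paper. The only cosmetic difference is that the paper invokes $B_\mcK(X)=C(X)$ directly from the cited results for reflexive, Asplund and Rosenthal flows rather than re-deriving density via Stone--Weierstrass.
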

\begin{proof}
We comment on the two conditions of the definition separately.

(1). This is obvious for Hilbert spaces. Asplund spaces can be characterized by the property that $L^2(\Omega,V)^*$ is naturally identified with $L^2(\Omega,V^*)$ (see, for instance, \cite[IV., \textsection 1]{dieuhlBook}), and from this fact the claim follows easily for reflexive and Asplund spaces. For Rosenthal spaces this was proved by Pisier in \cite{pis78}; see also \cite[\textsection 2.2]{cemmenBook}.

(2). For the classes of reflexive, Asplund and Rosenthal spaces, it follows from the works of Glasner and Megrelishvili that every $\mcK$-representable $G$-flow $X$ satisfies $B_\mcK(X)=C(X)$. Particularly, for Rosenthal spaces, this is a consequence of \cite{glamegNotl1}, Theorem~6.7. For the class $\mcK$ of Hilbert spaces, by considering sums and tensor products we see that $B_\mcK(X)$ forms a unital subalgebra of $C(X)$; if $X$ is $\mcK$-representable, then $B_\mcK(X)$ separates points of $X$, hence the Stone--Weierstrass theorem implies that $B_\mcK(X)$ is dense in $C(X)$.
\end{proof}

\begin{rem}\label{rem separating points implies rep} Let $\mcK$ be an $R$-closed class of Banach spaces, and let $X$ be a metrizable $G$-flow. Suppose that the representations of $X$ on Banach spaces in the class $\mcK$ separate points of $X$. Then, $X$ is actually $\mcK$-representable. Indeed, since $\mcK$ is closed under forming $L^2$-spaces and subspaces, it follows that $\mcK$ is closed under $\ell^2$-sums; then, using that $X$ is second countable, we can choose countably many representations separating points and use them to construct a faithful representation on the $\ell^2$-sum of the corresponding spaces, as is done in \cite{megHilb}, Lemma~3.3.
\end{rem}

Let $X$ be a metrizable flow of a Polish group $G$. We construct, for every representation $\alpha\colon X\to V^*$ of $G\actson X$ on a separable Banach space $V$, an induced representation $$\alpha^R\colon\mcS(\Omega,X)\to L^2(\Omega,V)^*$$ of the action $G\wr\Omega\actson\mcS(\Omega,X)$ on the Bochner space $L^2(\Omega,V)$.

The induced action of $G\wr\Omega$ on $L^2(\Omega,V)$ was described in \textsection\ref{ss:Bochner}. As for $\alpha^R$, we may define it by the relation $$\langle f,\alpha^R(\lambda)\rangle=\int\langle f(\omega),\alpha(x)\rangle d\lambda(\omega,x)$$ for $f\in L^2(\Omega,V)$ and $\lambda\in\mcS(\Omega,X)$. By approximating $f$ by continuous functions, it is clear that $\alpha^R$ is continuous, since $\alpha$ is. In addition, it is convenient to define $\alpha^R(\lambda)$ as an element of $L^2_{w^*}(\Omega,V^*)$. Given a measure $m\in\Reg(X)$, the weak$^*$ expectation of $\alpha$ with respect to $m$ is the functional $\mbE^m(\alpha)\in V^*$ defined by $$\langle v,\mbE^m(\alpha)\rangle=\mbE^m(\langle v,\alpha\rangle)=\int\langle v,\alpha(x)\rangle dm(x)$$ for every $v\in V$. Then, given $p\in\mcS(\Omega,X)$, we define $\alpha^R(p)\colon\Omega\to V^*$ by $$\alpha^R(p)(\omega)=\mbE^{p(\omega)}(\alpha).$$ Since $\alpha$ is continuous, its image in $V^*$ is bounded, and this ensures that $\alpha^R(p)\in L^2_{w^*}(\Omega,V^*)$. Clearly, the two definitions of $\alpha^R$ coincide. Also, it is straightforward to check that $\alpha^R$ is $G\wr\Omega$-equivariant.

\begin{theorem}\label{thm general preservation}
Let $G\actson X$ be a continuous action of a Polish group on a compact metrizable space. Let $\mcK$ be an $R$-closed class of Banach spaces. If $G\actson X$ is $\mcK$-representable, then so is $G\wr\Omega\actson\mcS(\Omega,X)$.
\end{theorem}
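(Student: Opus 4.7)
The plan is to use, for each $\mcK$-representation $\alpha\colon X\to V^*$ of $G\actson X$, the induced map $\alpha^R\colon\mcS(\Omega,X)\to L^2(\Omega,V)^*$ constructed above, and to argue that as $\alpha$ ranges over all such representations the family $\{\alpha^R\}$ separates points of $\mcS(\Omega,X)$. Since $\alpha^R$ has already been shown to be weak$^*$-continuous and $G\wr\Omega$-equivariant, and since condition~(1) of Definition~\ref{def:R-closed} ensures $L^2(\Omega,V)\in\mcK$, each $\alpha^R$ is a representation of $G\wr\Omega\actson\mcS(\Omega,X)$ on a space in $\mcK$. Hence, by Remark~\ref{rem separating points implies rep}, the point-separation property is sufficient to conclude that $\mcS(\Omega,X)$ is $\mcK$-representable.

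To prove that separation, I fix distinct $\lambda,\nu\in\mcS(\Omega,X)$. Since $\Omega\times X$ is compact metrizable, some continuous function on this product distinguishes $\lambda$ and $\nu$; by the Stone--Weierstrass theorem, the algebra spanned by elementary tensors $(\omega,x)\mapsto e(\omega)h(x)$ with $e\in C(\Omega)$ and $h\in C(X)$ is dense in $C(\Omega\times X)$, so such $e,h$ can be found with $\int e(\omega)h(x)\,d\lambda\neq\int e(\omega)h(x)\,d\nu$. I now invoke condition~(2) of $R$-closedness, applicable because $G\actson X$ is $\mcK$-representable by hypothesis: since $B_\mcK(X)$ is dense in $C(X)$, I may perturb $h$ into a function of the form $h(x)=\langle v,\alpha(x)\rangle$ for some $\mcK$-representation $\alpha\colon X\to V^*$ and some $v\in V$, while keeping the strict inequality of integrals.

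Setting $f(\omega)\coloneqq e(\omega)v$, one checks that $f\in L^2(\Omega,V)$ (using boundedness of $e$ on the compact space $\Omega$) and, unfolding the defining formula of $\alpha^R$, that $\langle f,\alpha^R(\lambda)\rangle=\int e(\omega)h(x)\,d\lambda$, and similarly for $\nu$. The inequality of integrals then witnesses $\alpha^R(\lambda)\neq\alpha^R(\nu)$, completing the separation argument.

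I expect no real obstacle; the substantive content is concentrated in the choice of a distinguishing elementary tensor (Stone--Weierstrass) together with the two closure properties built into $R$-closedness, while all remaining steps are routine unwindings of the definitions of $\alpha^R$, $\mcS(\Omega,X)$, and the $G\wr\Omega$-action on $L^2(\Omega,V)$.
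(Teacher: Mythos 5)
Your proof is correct and follows essentially the same route as the paper: reduce to showing that the induced representations $\alpha^R$ on the spaces $L^2(\Omega,V)$ separate points of $\mcS(\Omega,X)$, then invoke Remark~\ref{rem separating points implies rep}, with the density of $B_\mcK(X)$ in $C(X)$ doing the real work. The only (cosmetic) difference is that you argue directly, producing a single separating elementary tensor $e\otimes h$ via Stone--Weierstrass on $\Omega\times X$ and perturbing $h$ into $B_\mcK(X)$, whereas the paper argues contrapositively, using a countable dense family of $\mcK$-vector-representable functions to identify the disintegrations $p(\omega)$ and $q(\omega)$ almost everywhere; both are sound, and your version sidesteps the almost-everywhere bookkeeping (just note, as the paper does, that $V$ may be replaced by the separable closed span of $Gv$ so that the Bochner-space construction of $\alpha^R$ applies).
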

\begin{proof}
By Remark~\ref{rem separating points implies rep}, it suffices to show that the representations on Banach spaces in the class $\mcK$ separate points of $\mcS(\Omega,X)$. Suppose that $p,q\in\mcS(\Omega,X)$ cannot be separated in this way. In particular, since $\mcK$ is $R$-closed, $\alpha^R(p)=\alpha^R(q)$ for every representation $\alpha\colon X\to V^*$ on a separable Banach space $V\in\mcK$. For such $\alpha$, if $v\in V$, we have then $$\langle v,\mbE^{p(\omega)}(\alpha)\rangle=\langle v,\mbE^{q(\omega)}(\alpha)\rangle$$ for almost every $\omega$.

Since $X$ is $\mcK$-representable, our hypothesis on $\mcK$ ensures that we can find a countable dense family $F\subset C(X)$ consisting of $\mcK$-vector-representable functions. For each $f\in F$, let $\alpha_f$ be a representation of $X$ on a separable Banach space $V_f\in\mcK$ with a vector $v_f\in V_f$ such that $f(x)=\langle v_f,\alpha_f(x)\rangle$ for every $x\in X$. Since $F$ is countable, we have that $\langle v_f,\mbE^{p(\omega)}(\alpha_f)\rangle=\langle v_f,\mbE^{q(\omega)}(\alpha_f)\rangle$ for all $f\in F$ and every $\omega$ in a common full-measure set. That is, $\mbE^{p(\omega)}(f)=\mbE^{q(\omega)}(f)$ for every $f\in F$ and almost every $\omega$. Since $F$ is dense in $C(X)$, it follows that $p(\omega)=q(\omega)$ almost everywhere. That is, $p=q$, and the theorem follows.
\end{proof}

By thinking of $X$ as a type space, the previous result can be thought of as a Banach-theoretic counterpart to the preservation results of model-theoretic properties by randomizations, studied within \cite{benkei,benOntheories,benVapChe}. In the case of $\aleph_0$-categorical theories, by the translation discussed in \cite{iba14}, this correspondence is exact. Indeed, suppose $T$ is separably categorical, and let $\varphi(x,y)$ be any formula. We obtain a new proof of the following, which was shown in \cite{benVapChe}; see Lemma~5.4 therein for the usual definition of (the negation of) NIP in the metric setting.

\begin{cor}
If $\varphi(x,y)$ is NIP for $T$, then $\mbE\llbracket\varphi(x,y)\rrbracket$ is NIP for $T^R$.
\end{cor}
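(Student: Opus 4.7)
The plan is to deduce this from Theorem~\ref{thm general preservation} applied to the class $\mcK$ of Rosenthal Banach spaces, combined with the Banach-theoretic characterisation of NIP formulas in the $\aleph_0$-categorical setting developed in \cite{iba14}.

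First, I would recast the hypothesis dynamically. Set $\Delta = \{\varphi\}$, let $G = \Aut(M)$, and consider the $G$-flow $X = S_\Delta(M)$. Since $M$ is $\aleph_0$-categorical and $\varphi$ is NIP for $T$, the translation in \cite{iba14} identifies NIP of $\varphi$ with Rosenthal-vector-representability of the continuous function it induces on $X$. Because the class of Rosenthal spaces is $R$-closed and $B_\mcK(Y) = C(Y)$ for every Rosenthal-representable flow $Y$, this is equivalent to saying that the whole flow $G \actson X$ is Rosenthal-representable (the representations on Rosenthal spaces separate points of $X$, so Remark~\ref{rem separating points implies rep} applies).

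Next, I would apply Theorem~\ref{thm general preservation}: the induced flow $G \wr \Omega \actson \mcS(\Omega,X)$ is then Rosenthal-representable. By Lemma~\ref{lem types as random variables}, $\mcS(\Omega,X)$ is equivariantly homeomorphic to $S_{\Delta^R}(M^R)$, and under this identification $\mbE\llbracket \varphi(x,y) \rrbracket \in \Delta^R$ corresponds to a continuous function on the type space. Using again the $R$-closure of $\mcK$ and the equality $B_\mcK = C$ on Rosenthal-representable flows, this function is itself Rosenthal-vector-representable.

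Finally, $T^R$ is $\aleph_0$-categorical by Corollary~\ref{separable categoricity is preserved}, with $\Aut(M^R) = G \wr \Omega$ by Theorem~\ref{thm G^R}, so the dictionary of \cite{iba14} applies on the $T^R$ side too: Rosenthal-vector-representability of the function associated to $\mbE\llbracket\varphi(x,y)\rrbracket$ translates back into the statement that the formula is NIP for $T^R$. The subtle point will be bookkeeping around this model-theoretic/Banach-theoretic translation, making sure that the type space $S_\Delta(M)$ is the one over which NIP of $\varphi$ is witnessed by Rosenthal-representability, and that the homeomorphism of Lemma~\ref{lem types as random variables} lands on a type space on which $\mbE\llbracket\varphi\rrbracket$ is genuinely a formula; once those identifications are in place, the conclusion is a direct consequence of Theorem~\ref{thm general preservation}.
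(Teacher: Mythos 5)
Your proposal is correct and follows essentially the same route as the paper: the characterization of NIP via Rosenthal-representability of the flow $G\actson S_\Delta(M)$ from \cite{iba14}, then Theorem~\ref{thm general preservation} for the class of Rosenthal spaces together with the identification $S_{\Delta^R}(M^R)\simeq\mcS(\Omega,S_\Delta(M))$ of Lemma~\ref{lem types as random variables}, and finally the translation back on the $T^R$ side. The only cosmetic difference is that you fix $\Delta=\{\varphi\}$ and justify the equivalence between tameness of the single formula and representability of the whole flow via Remark~\ref{rem separating points implies rep}, where the paper simply quantifies over some $\Delta$ containing $\varphi$.
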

\begin{proof}
Let us fix a model $M$ of $T$ and $G=\Aut(M)$. It follows from \cite[\textsection 3]{iba14} that a formula $\varphi(x,y)$ is NIP if and only if, for some set of formulas $\Delta$ containing $\varphi$, the action $G\actson S_\Delta(M)$ is Rosenthal-representable. In that case, by our previous results, the action $\Aut(M^R)\actson S_{\Delta^R}(M^R)$, which is the same as the action $G\wr\Omega\actson\mcS(\Omega,S_\Delta(M))$, is Rosenthal-representable too. Since $\Delta^R$ contains the formula $\mbE\llbracket\varphi(x,y)\rrbracket$, we deduce that the latter is NIP.
\end{proof}

If instead of considering Rosenthal spaces we consider reflexive spaces, then the same argument yields yet another proof of Corollary~\ref{cor pres stability}.

\noindent\hrulefill

\section{Beautiful pairs of randomizations}\label{s:beautiful}

In this section we study the theory of beautiful pairs of models of a randomized $\aleph_0$-categorical theory. Let us first explain our motivation to do so.

For \emph{classical}, stable, $\aleph_0$-categorical structures, a number of very dissimilar properties turn out be equivalent. For instance, if $M$ is one such structure, $T=\text{Th}(M)$, $G=\Aut(M)$, then each of the following conditions implies the other:
\begin{enumerate}
\item $M$ is $\aleph_0$-stable.
\item $M$ is one-based.
\item The theory $T_P$ of beautiful pairs of models of $T$ is $\aleph_0$-categorical.
\item The Roelcke compactification $R(G)$ is Hilbert-representable.
\end{enumerate}

Within these, the central notion is (2), which has direct, strong model-theoretic consequences for $M$. That (1) is equivalent to (2) is a classical, intricate theorem (see, for instance, Theorem~5.12 in \cite[Ch.~2]{pil96}). The equivalence of (2) and (3) was proved in \cite{bbhSFB14}, and the equivalence of (2) and (4) was shown in \cite{bit15}.

When we pass from classical to continuous logic, the most basic and well-behaved new structures we get fail to be one-based. Thus the question arises of whether there exists an appropriate generalization of this notion to the metric setting. In \cite{bbhSFB14}, the authors propose a generalization (for metric, stable theories) that does hold in some important examples, which they call \emph{SFB} (for \emph{strongly finitely based}). They focus on (metric, stable) $\aleph_0$-categorical theories, and there they show the following: $T$ is SFB if and only if the theory $T_P$ of beautiful pairs of models of $T$ is $\aleph_0$-categorical. We may take this as a definition.

We will point out here a weakness of this proposed generalization, by proving a non-preservation result: the property SFB is not preserved by randomizations. In fact, it fails badly for most randomized theories (even though it does hold for the theory of the measure algebra of~$\Omega$).

\subsection{The theory $T_P$} We recall the basic definitions and facts about beautiful pairs of models of a stable metric theory, and refer to \cite[\textsection 4]{benOnuniform} for more details. An \emph{elementary pair} of models of a theory $T$ consists of a model $M\models T$ together with an elementary substructure $N\prec M$. A \emph{beautiful pair} of models of $T$ is an elementary pair $(M,N)$ such that $N$ is approximately $\aleph_0$-saturated (as per \cite{benusv-d-finiteness}, Definition~1.3) and $M$ is approximately $\aleph_0$-saturated over $N$, that is to say, the structure $M$ augmented with constants for the elements of $N$ is approximately $\aleph_0$-saturated. (We follow the definition of \cite{benOnuniform}, which is broader than the one given in \cite{bbhSFB14}, although both induce the same theory~$T_P$.)

Elementary pairs of models of an $L$-theory $T$ are considered in the language $L_P$, which is $L$ expanded with a predicate~$P$ for the distance to the smaller model of the pair. We denote by $T_P$ the common theory of all beautiful pairs of models of $T$ in this language, and we write $(M,N)\models T_P$ to say that $M$ together with the interpretation $P(x)=d(x,N)$ forms a model of $T_P$.

When $T$ is $\aleph_0$-categorical, it can be shown that any saturated model of $T_P$ is again a beautiful pair; this fact is expressed by saying that the class of beautiful pairs of models of $T$ is \emph{almost elementary}. However, we will work with separable models of $T_P$, which need not be beautiful pairs. To this end, we will use the following general characterization of the models of $T_P$, which follows from the proof of Theorem~4.4 of \cite{benOnuniform}.

\begin{theorem}\label{axioms for TP} Suppose $T$ is a stable $L$-theory whose class of beautiful pairs of models is almost elementary. Let $N\prec M$ be models of $T$. Then, $(M,N)\models T_P$ if and only if the following holds: for every $\epsilon>0$, every finite $z$-tuple $c$ from $M$, every 1-type $p\in S_x(Nc)$ and every finite set of $L$-formulas $\varphi_i(x,yz)$, $i<n$, there is $a\in M$ such that $$|\varphi_i(a,bc)-\varphi_i(x,bc)^p|<\epsilon$$ for every $y$-tuple $b$ in $N$ and each $i<n$. In other words, types over finite expansions of $N$ are approximately finitely realized in $M$ uniformly on the parameters.\end{theorem}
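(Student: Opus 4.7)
My plan is to establish the equivalence by reducing each direction to the behavior of beautiful pair extensions, using the almost elementarity hypothesis and the definability of types in the stable theory $T$.

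For the direction $(M,N) \models T_P \Rightarrow$ uniform realization, I would encode the realization assertion as a continuous $L_P$-condition and argue by $L_P$-elementarity. Given $c$, $p \in S_x(Nc)$ and formulas $\varphi_i(x,y,z)$, since $T$ is stable and $p$ is a type over the model $N$ (with the extra parameter $c$), each $\varphi_i$-part of $p$ is defined by an $L$-definable predicate $d_i(y,z,n)$ with parameters $c$ and some tuple $n$ in $N$; that is, $\varphi_i(x,b,c)^p = d_i(b,c,n)$ for all $b \in N$. Let $K$ be larger than a uniform Lipschitz constant of each $\varphi_i$ and $d_i$ in $y$, and consider the $L_P$-formula with parameters $c,n$:
$$\Psi(c,n) \coloneqq \inf_x \sup_y \max_i \max\bigl(|\varphi_i(x,y,c) - d_i(y,c,n)| - K \cdot P(y),\ 0\bigr).$$
The penalty $KP(y)$ ensures that $\Psi(c,n)$ evaluates to $\inf_a \sup_{b \in N}\max_i|\varphi_i(a,b,c) - d_i(b,c,n)|$, since elements $y$ only approximately in $N$ are forced back to $N$ via the Lipschitz estimate. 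Pass to a sufficiently approximately $\aleph_0$-saturated $L_P$-elementary extension $(M^*,N^*) \succeq (M,N)$; by almost elementarity this is a beautiful pair. Let $p^* \in S_x(N^*c)$ be the non-forking extension of $p$ (unique by stationarity of types over models), whose $\varphi_i$-parts are defined by the same $d_i$. Since $M^*$ is approximately $\aleph_0$-saturated over $N^*$, $p^*$ is realized by some $a^* \in M^*$, whence $\Psi(c,n)=0$ in $(M^*,N^*)$ and, by $L_P$-elementarity, also in $(M,N)$, producing the required $a \in M$ up to any $\epsilon>0$.

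For the converse, assume the uniform realization property; I would show $(M,N) \models T_P$ by constructing an $L_P$-elementary extension $(M^*,N^*) \succeq (M,N)$ that is itself a beautiful pair. A transfinite chain construction realizes 1-types over all finite expansions $N_0 c$ of $N_0 \prec N^*$ (to make $M^*$ approximately $\aleph_0$-saturated over $N^*$) while simultaneously enlarging $N^*$ within $M^*$ as an approximately $\aleph_0$-saturated $L$-structure. The uniform realization hypothesis on $(M,N)$ is precisely what makes the $L_P$-diagram of $(M,N)$ compatible with the realization axioms needed along the way, so that a standard compactness argument yields the chain. Once such an $L_P$-elementary extension is a beautiful pair, we deduce $(M,N) \models T_P$.

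The main obstacle is the uniformity in the parameter $b \in N$ required by the statement: naive realization in a beautiful pair produces a single $a^*$ realizing $p^*$ exactly, but transferring this to $(M,N)$ via $L_P$-elementarity demands that the replacement of ``$b \in N$'' by the continuous condition $P(b)=0$ be done stably with respect to Lipschitz data. The penalty device $K \cdot P(y)$ with $K$ larger than the joint Lipschitz constant is the essential trick that lets a single $L_P$-formula capture an uncountable family of approximate realization conditions. The minor care needed to include the auxiliary type-definition parameters $n$ from $N$ inside the $L_P$-formula is routine.
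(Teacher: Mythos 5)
The paper does not actually prove this statement---it quotes it as following from the proof of Theorem~4.4 of \cite{benOnuniform}---so you are in effect reconstructing that argument. Your forward direction is essentially right and is the standard argument: encode ``$\sup$ over $b\in N$'' by the penalty $K\cdot P(y)$ (for general formulas, which are only uniformly continuous and bounded rather than Lipschitz, one lets $K\to\infty$ and obtains the relativized quantifier as a definable predicate, but this is routine), pass to a sufficiently saturated $L_P$-elementary extension, which is a beautiful pair by almost elementarity \emph{because} $(M,N)\models T_P$ is the hypothesis, and realize there the type with the same definition scheme. One small correction: $p\in S_x(Nc)$ need not be definable over $Nc$ with parameters $n$ from $N$; the $\varphi_i$-definitions of a global nonforking extension of $p$ live in $\acl^{eq}(Nc)\subseteq M^{eq}$, so you should allow $n$ to be a tuple from $M$ (this costs nothing, since $\Psi(c,n)$ only needs parameters in the large model).

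The converse, however, has a genuine gap. The only viable route is to take a sufficiently saturated $L_P$-elementary extension $(M^*,N^*)\succeq(M,N)$ and verify directly that it is a beautiful pair (approximate saturation of $N^*$ comes for free from $L_P$-saturation; the issue is approximate saturation of $M^*$ over $N^*$). For that, one must know that the uniform realization property \emph{transfers} from $(M,N)$ to $(M^*,N^*)$, i.e., that it is expressible by $L_P$-conditions. Since the property quantifies over all types $p\in S_x(Nc)$, expressing it requires quantifying over their definition schemes (canonical bases), and this is exactly where the almost-elementarity hypothesis---equivalently, uniform definability of canonical bases, the subject of \cite{benOnuniform}---is used. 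Your sketch never invokes that hypothesis in this direction; the sentence ``the uniform realization hypothesis \dots is precisely what makes the $L_P$-diagram of $(M,N)$ compatible with the realization axioms'' asserts the needed compatibility rather than proving it. Relatedly, a ``transfinite chain realizing types one at a time'' does not obviously preserve $L_P$-elementarity: adjoining new elements to the large model of a pair changes the values of $L_P$-formulas at old parameters (and enlarging $N^*$ changes $P$), so no standard union-of-chains or compactness argument applies without the expressibility issue being resolved first.
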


\begin{rem}\label{rem smaller small model} If $(M,N)\models T_P$ and $\tilde{N}\prec N$, then also $(M,\tilde{N})\models T_P$.\end{rem}

\subsection{Separable models of $(T^R)_P$} We will consider two copies of the unit interval $\Omega$, say $\Omega_0$ and $\Omega_1$. Then, $\Omega^2$ will denote the product space $\Omega_0\times\Omega_1$, and $\Omega$ will stand for its factor induced by $\Omega_0$; that is, $\Omega$ will denote the measure space $\Omega_0\times\Omega_1$ restricted to the sub-$\sigma$-algebra generated by the projection $\Omega_0\times\Omega_1\to\Omega_0$. In this way, if $X$ is a subset of a Polish space $Y$, then $X^\Omega$ becomes a subset of $Y^{\Omega^2}$. The measure on each of the spaces $\Omega_0$, $\Omega_1$, $\Omega$ or $\Omega^2$ will still be denoted by $\mu$.

Let us denote $\mbA_\Omega\coloneqq [0,1]^\Omega$ and $\mbA_{\Omega^2}\coloneqq [0,1]^{\Omega^2}$. Hence, $\mbA_\Omega$ is a substructure of $\mbA_{\Omega^2}$. Let $ARV$ denote $\text{Th}(\mbA)$, that is, the theory of $[0,1]$-valued random variables over atomless probability spaces. Finally, we denote by $\mbA_P$ the pair $(\mbA_{\Omega^2},\mbA_\Omega)$, which is a structure in the language of pairs of models of $ARV$.

\begin{prop} The theory $ARV_P$ of beautiful pairs of models of $ARV$ is $\aleph_0$-categorical, and we have $\mbA_P\models ARV_P$.\end{prop}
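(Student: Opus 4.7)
The plan is first to show $\mbA_P\models ARV_P$ by verifying the axiomatization of Theorem~\ref{axioms for TP}, and then to deduce $\aleph_0$-categoricity from the approximate saturation of $\mbA_P$. Throughout, one uses the quantifier elimination of $ARV$ to reduce formulas to expressions of the form $\mbE F(x,y,z)$ with $F\colon[0,1]^{1+|y|+|z|}\to[0,1]$ continuous.

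For the pair axiom, fix $\epsilon>0$, a tuple $c\in\mbA_{\Omega^2}^n$, a type $p\in S_x(\mbA_\Omega c)$, and finitely many continuous $F_i$. The task is to construct $a\in\mbA_{\Omega^2}$ such that
\[
\left|\int F_i(a(\omega),b(\omega_0),c(\omega))\,d\mu(\omega)-(\mbE F_i(x,b,c))^p\right|<\epsilon
\]
uniformly in $b\in\mbA_\Omega^{|y|}$ and $i<n$. I would realize $p$ in some $\aleph_1$-saturated $ARV$-extension of $\mbA_{\Omega^2}$ by a random variable $X$, and take a version of the regular conditional distribution of $X$ given the $\sigma$-algebra $\mathcal{F}_0\vee\sigma(c)$ on $\Omega^2$, where $\mathcal{F}_0$ is the pullback to $\Omega^2$ of the $\sigma$-algebra of $\Omega_0$. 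This yields a measurable kernel $\kappa$. A fiberwise quantile rearrangement on $\Omega_1$ then produces $a\in\mbA_{\Omega^2}$ whose conditional law given $\mathcal{F}_0\vee\sigma(c)$ equals $\kappa$ up to any desired accuracy (exactly, when there is atomless residual randomness, and otherwise up to $\epsilon$ via a finite partition refinement of $\sigma(c)$ adjusted to the moduli of continuity of the $F_i$). Since every $b\in\mbA_\Omega^{|y|}$ is $\mathcal{F}_0$-measurable, the integrals $\int F_i(a,b,c)\,d\mu$ then match the values prescribed by $p$ uniformly in $b$.

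For $\aleph_0$-categoricity, once $\mbA_P\models ARV_P$ it remains to show that $\mbA_P$ is approximately $\aleph_0$-saturated and to invoke the metric Ryll--Nardzewski theorem. A type over a finite tuple from $\mbA_P$ is captured, via QE in $ARV_P$ and a disintegration in the style of Lemma~\ref{lem types as random variables}, by a measurable assignment of joint distributions of the new variable with the given parameters; the same rearrangement argument realizes such a type in $\mbA_P$ itself. Alternatively, one may cite the equivalent statement from \cite{bbhSFB14}, where $ARV$ is established to be SFB. The main obstacle throughout is the uniformity clause of the pair axiom: a single $a$ must $\epsilon$-match the type against every $b\in\mbA_\Omega$ at once, which is what distinguishes the pair axiom from mere saturation. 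This is ultimately enabled by the product decomposition $\Omega^2\cong\Omega_0\times\Omega_1$ and the resulting orthogonality $\mbA_\Omega\ind\mbA_{\Omega_1}$ inside $\mbA_{\Omega^2}$, which is also the reason the pair axiom holds for $\mbA_P$ but typically fails for a generic pair of models of $ARV$.
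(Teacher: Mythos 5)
The paper's own proof of this proposition is a one-line citation of \cite{bbhSFB14}, Corollary~3.15, so you are attempting a direct argument the paper does not carry out. The first half of your plan --- verifying the pair axiom of Theorem~\ref{axioms for TP} for $(\mbA_{\Omega^2},\mbA_\Omega)$ by prescribing the conditional distribution of the new variable over $\mathcal{F}_0\vee\sigma(c)$ --- is workable, but two points need more care than you give them: the fiberwise rearrangement must be chosen measurably in $\omega_0$ (this is exactly what a uniformization argument in the style of Lemma~\ref{lem family of finite measures} provides), and the uniformity over $b\in\mbA_\Omega^{|y|}$ survives the passage from exact to approximate matching of conditional laws only because $b(\omega_0)$ ranges over the \emph{compact} set $[0,1]^{|y|}$ and the $F_i$ are uniformly continuous there. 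That compactness is the real reason your argument can avoid the stability counting device (Lemma~\ref{stability facts}) that the general randomized case (Proposition~\ref{prop models of TRP}) requires; attributing it only to the orthogonality $\mbA_\Omega\ind\mbA_{\Omega_1}$ understates what is being used.

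The genuine gap is in the second half. Approximate $\aleph_0$-saturation of the \emph{single} separable model $\mbA_P$ does not imply $\aleph_0$-categoricity of its theory: the metric Ryll--Nardzewski theorem requires metric compactness of all the type spaces (equivalently, that \emph{every} separable model is approximately $\aleph_0$-saturated, or that $\Aut(\mbA_P)\actson\mbA_P$ is approximately oligomorphic), and the existence of one separable approximately saturated model is strictly weaker --- $\mathrm{ACF}_0$ already witnesses this in the classical case. What is actually needed is a classification of \emph{all} separable models of $ARV_P$: each is a pair of separable atomless probability algebras with the larger atomless over the smaller, and Maharam's homogeneity theorem identifies all such pairs with $(\Omega^2,\Omega)$ --- this is the substance of \cite{bbhSFB14}, Corollary~3.15. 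Alternatively, within this paper's own machinery, one could use the bi-interpretation $\mbA_P\simeq(\mbA_{\Omega_1})^R$ together with Proposition~\ref{wreath on rando is app olig} to see that $\Aut(\mbA_P)\actson\mbA_P$ is approximately oligomorphic. Your fallback of simply citing \cite{bbhSFB14} is of course valid (it is what the paper does), but the direct route you sketch for categoricity does not close as stated.
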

\begin{proof}
See \cite{bbhSFB14}, Corollary~3.15.
\end{proof}

From now on, we fix an $\aleph_0$-categorical, stable theory~$T$ and a separable model $M\models T$. As before, the theory of beautiful pairs of models of $T$ is $T_P$ and the randomization of $T$ is $T^R$. The theory of beautiful pairs of models of $T^R$ is $(T^R)_P$. Rather than describing the beautiful pairs of models of $T^R$, we are interested in the separable models of $(T^R)_P$.

Given any elementary pair $\mbP$ of models of $T^R$, we can consider the reduct formed by the pair of their auxiliary sorts. This is an elementary pair of models of $ARV$, which we denote by $\mbA_\mbP$, and which we may call the \emph{auxiliary sort of $\mbP$}.

\begin{rem} If $\mbP\models (T^R)_P$, then, clearly, $\mbA_\mbP\models ARV_P$.\end{rem}

Hence, if we have a separable model $\mbP\models (T^R)_P$, then $\mbA_\mbP\simeq\mbA_P$. It follows that we can identify the large model of the pair $\mbP$ with the Borel randomization $M^R$ based on $\Omega^2$ (that is, with main sort $M^{\Omega^2}$ and auxiliary sort $\mbA_{\Omega^2}$) and the small model of the pair $\mbP$ with some substructure $S\prec M^R$ whose auxiliary sort is $\mbA_\Omega$. Thus, in order to classify the separable models $\mbP\models (T^R)_P$ up to isomorphism, we are left to understand the different possibilities for the main sort of $S$.

\begin{notation} From now on, unless otherwise stated, $M^R$ will denote the randomization of $M$ based on $\Omega^2$, as above. Given a submodel $S\prec M^R$ with main sort $S_0$ and auxiliary sort $\mbA_\Omega$, we will denote by $(M^{\Omega^2},S_0)_{\mbA_P}$ the elementary pair $(M^R,S)$ of models of $T^R$.\end{notation}

It is natural to expect that, if $(M,N)$ is a model of $T_P$, then $(M^{\Omega^2},N^\Omega)_{\mbA_P}$ should be a model of $(T^R)_P$. This is correct, but we will prove that this does in no way exhaust the models of $(T^R)_P$ (except in trivial cases). Given $h\in\End(M)^{\Omega^2}$, let $$\mbP_h\coloneqq (M^{\Omega^2},S_h)_{\mbA_P},$$ where $S_h\coloneqq\{hs:s\in M^\Omega\}$. The following is the main result of this section.

\begin{theorem}\label{thm models of TRP}
Let $\mbP$ be a separable elementary pair of models of $T^R$. Then, $\mbP\models (T^R)_P$ if and only if $\mbA_\mbP\simeq\mbA_P$. Moreover, in that case, $\mbP\simeq\mbP_h$ for some $h\in\End(M)^{\Omega^2}$.
\end{theorem}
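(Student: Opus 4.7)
The implication $\Rightarrow$ together with the ``moreover'' part reduces to combining the $\aleph_0$-categoricity of $ARV_P$ with the description of submodels of $M^R$ given in Remark~\ref{rem submodels of M^R}. Indeed, if $\mbP \models (T^R)_P$, its auxiliary-sort reduct $\mbA_\mbP$ models $ARV_P$ and so is isomorphic to $\mbA_P$. Identifying the pair this way, the large model of $\mbP$ is a separable model of $T^R$ whose auxiliary sort is $\mbA_{\Omega^2}$; by $\aleph_0$-homogeneity of $T^R$ it can be identified with $M^R$ (based on $\Omega^2$), extending the auxiliary-sort identification. The small model is then an elementary submodel $S\prec M^R$ with auxiliary sort $\mbA_\Omega$, and Remark~\ref{rem submodels of M^R}, applied with $\Omega^2$ in place of $\Omega$, forces the main sort of $S$ to consist of random variables of the form $h\cdot r$ with $r\in M^\Omega\hookrightarrow M^{\Omega^2}$ for some $h\in\End(M)^{\Omega^2}$ (the $\sigma$-algebra constraint fixes the factor $s$ as the one generating $\Omega_0$). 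Hence $\mbP\simeq\mbP_h$.

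For the converse, I verify $\mbP_h\models (T^R)_P$ for arbitrary $h\in\End(M)^{\Omega^2}$ via Theorem~\ref{axioms for TP}, which applies since $T^R$ is $\aleph_0$-categorical and stable, so its class of beautiful pairs is almost elementary. Given $\epsilon>0$, a finite tuple $c\in(M^{\Omega^2})^{|z|}$, a 1-type $p\in S_x(S_hc)$, and $L^R$-formulas $\Phi_i(x,yz)$ for $i<n$, I must produce $a$ in the appropriate sort of $M^R$ with $|\Phi_i(a,bc)-\Phi_i(x,bc)^p|<\epsilon$ uniformly in $b\in S_h^{|y|}$. By quantifier elimination in $T^R$, each $\Phi_i$ may be taken of the form $\mbE\tau_i$; the auxiliary-sort case ($x$ from $\mbA_{\Omega^2}$) reduces to $\mbA_P\models ARV_P$, so I concentrate on the main-sort case.

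The construction is pointwise. Setting $M_\omega\coloneqq h(\omega)(M)\prec M$, a parameter-wise application of Lemma~\ref{lem types as random variables} represents $p$ by a measurable family $\omega\mapsto p_\omega$ of local 1-types over $M_\omega\cdot c(\omega)$. Note that $b(\omega)\in M_\omega^{|y|}$ whenever $b\in S_h^{|y|}$. At each $\omega$, I seek $a(\omega)\in M$ realizing the pointwise uniform estimate $|\tau_i(a(\omega),b,c(\omega))-\tau_i(x,b,c(\omega))^{p_\omega}|<\epsilon$ for every $b\in M_\omega^{|y|}$ and $i<n$. Although $S_y(c(\omega))$ is generally infinite, $\aleph_0$-categoricity of $T$ makes it compact and hence totally bounded in the pseudometric induced by the $\tau_i$; thus uniform realization over $b$ reduces to realization over finitely many representative $y$-tuples, achievable by approximate $\aleph_0$-saturation of $M$. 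Measurable selection via Lemma~\ref{Borel selector} assembles the $a(\omega)$ into $a\in M^{\Omega^2}$, and Fubini delivers the required global estimate.

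The main obstacle is achieving the pointwise \emph{uniform} estimate at each $\omega$, because the local pair $(M,M_\omega)$ need not itself be a model of $T_P$, so uniform realization of $p_\omega$ over $M_\omega^{|y|}$ is not automatic. The essential use of $\aleph_0$-categoricity of $T$ is to collapse the continuum of parameters $b(\omega)\in M_\omega^{|y|}$ to finitely many $\epsilon$-classes for the fixed family $\tau_i$ and the fixed tuple $c(\omega)$, restoring the uniformity needed for the local step. Executing this selection measurably and on a full-measure set, while keeping track of the dependence of $p_\omega$ on $\omega$, is the technical core of the proof.
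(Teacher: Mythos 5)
The forward direction and the ``moreover'' clause are fine and follow the paper's route: $\aleph_0$-categoricity of $ARV_P$ identifies the auxiliary pair with $\mbA_P$, and Remark~\ref{rem submodels of M^R} forces the small model to be some $S_h$. The problem is entirely in the converse, and it is a genuine gap, not a technicality.

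Your pointwise scheme asks, for almost every $\omega\in\Omega^2$, for a single $a(\omega)\in M$ with $|\tau_i(a(\omega),b,c(\omega))-\tau_i(x,b,c(\omega))^{p_\omega}|<\epsilon$ \emph{for every} $b\in M_\omega^{|y|}$. This is in general impossible, already in the key case $h=1$ (so $M_\omega=M$), which is exactly the case the paper reduces everything to via Remark~\ref{rem smaller small model}. Take $T$ the theory of an infinite set, $\tau=\llbracket x=y\rrbracket$, and let $p$ be the type over $M^{\Omega}$ with $\mbE\llbracket x=s\rrbracket^p=0$ for all $s$; then $p_\omega$ is the generic $1$-type over $M$, and no $a(\omega)\in M$ satisfies $|[a(\omega)=b]-0|<1/2$ for all $b\in M$ (take $b=a(\omega)$). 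Yet $p$ \emph{is} approximately realized in $M^{\Omega^2}$, by a random variable that uses the extra randomness in the $\Omega_1$-direction to spread over many values of $M$; a pointwise construction cannot see this. The justification you offer for the uniform local step is also false as stated: compactness of $S_y(c(\omega))$ in the logic topology does not give total boundedness in the uniform pseudometric $d_{\tau_i}(q,q')=\sup_a|\tau_i(a,y,c)^q-\tau_i(a,y,c)^{q'}|$; in the example above this pseudometric is $\{0,1\}$-valued and uniformly discrete on the infinitely many realized types, even though the type space over $\emptyset$ is a single point. So $\aleph_0$-categoricity does not collapse the parameters $b$ into finitely many $\epsilon$-classes.

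What the paper does instead, and what your argument is missing, is an essential use of \emph{stability} of $T$ (Lemma~\ref{stability facts}(2)): for each local type $q$ one chooses a sequence $(a_q^l)_{l}$ of approximate realizations such that, for \emph{every} parameter $b$, at most $m$ indices $l$ are bad (a bounded-alternation/indiscernibility consequence of stability), and then distributes $a_q^0,\dots,a_q^{k-1}$ over a partition of the $\Omega_1$-fiber of $h_c^{-1}(q)$ into $k$ pieces of equal conditional measure. Averaging over $\omega_1$ then gives an error of at most $m/k+\epsilon$ uniformly in $b\in (N^{|y|})^{\Omega_0}$, because $b$ depends only on $\omega_0$. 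There is also a measure-theoretic step you gloss over (replacing the $\Reg(S_x^T(Nc))$-valued disintegration of $p$ by a type-valued function via Lemma~\ref{lem family of finite measures}, and then by a countably-valued one via separability of $d_\varphi$), but the decisive missing idea is the fiberwise averaging of a stability-generic sequence rather than a single pointwise realization.
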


In particular, for any $N\prec M$, the pair $(M^{\Omega^2},N^\Omega)_{\mbA_P}$ is a model of $(T^R)_P$. Clearly, this leads to non-isomorphic models of $(T^R)_P$, as long as there exists a pair $N\prec M$ with $N\subsetneq M$. Indeed, for such $N$, the pairs $(M^{\Omega^2},N^\Omega)_{\mbA_P}$ and $(M^{\Omega^2},M^\Omega)_{\mbA_P}$ are distinct models of $(T^R)_P$. Conversely, if $M$ does not have proper elementary substructures (which happens if and only if $M$ is compact), then $\End(M)=\Aut(M)$, and in that case there is only one model of $(T^R)_P$ up to isomorphism.

\begin{cor}\label{cor SFB not preserved} The theory $(T^R)_P$ is not $\aleph_0$-categorical, unless $T$ is the theory of a compact structure.\end{cor}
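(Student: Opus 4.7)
The plan is to read the corollary off Theorem~\ref{thm models of TRP}, which parametrizes separable models of $(T^R)_P$ by elements $h\in\End(M)^{\Omega^2}$. Whether $(T^R)_P$ is $\aleph_0$-categorical hinges on whether $\End(M)=\Aut(M)$, which by the discussion following Theorem~\ref{thm models of TRP} happens precisely when $M$ is compact.

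Suppose first that $T$ is the theory of a compact structure, so $\End(M)=\Aut(M)$. Then every $h\in\End(M)^{\Omega^2}=\Aut(M)^{\Omega^2}$ lies in $\Aut(M^R)$ by Theorem~\ref{thm G^R}, fixes the auxiliary sort, and its inverse $h^{-1}$ sends $S_h=h\cdot M^\Omega$ back to $M^\Omega$. Hence $\mbP_h\simeq\mbP_{\mathrm{id}}$ as $L^R_P$-structures, so $(T^R)_P$ has a unique separable model and is $\aleph_0$-categorical.

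Suppose conversely that $T$ is not the theory of a compact structure, and fix $N\prec M$ with $N\neq M$. By $\aleph_0$-categoricity of $T$ there is an elementary self-embedding $\iota\colon M\to M$ with image $N$; letting $h\in\End(M)^{\Omega^2}$ be the constant random variable with value $\iota$ yields $S_h=h\cdot M^\Omega=N^\Omega$. Thus $\mbP_h=(M^{\Omega^2},N^\Omega)_{\mbA_P}$ and $\mbP_{\mathrm{id}}=(M^{\Omega^2},M^\Omega)_{\mbA_P}$ are two separable models of $(T^R)_P$ by Theorem~\ref{thm models of TRP}, and the substantive content of the corollary is to rule out an isomorphism between them. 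Any such isomorphism would be induced by some $\sigma\in\Aut(M^R)$ carrying $M^\Omega$ onto $N^\Omega$; writing $\sigma=gt$ via Theorem~\ref{thm G^R} with $g\in\Aut(M)^{\Omega^2}$ and $t\in\Aut(\Omega^2)$, every constant random variable $a\in M\subset M^\Omega$ is fixed by $t$, so $\sigma(a)(\omega)=g(\omega)(a)$ must lie in $N$ almost surely. Intersecting the resulting full-measure sets as $a$ ranges over a countable dense subset of $M$ would force $g(\omega)(M)\subseteq N$ for almost every $\omega$, contradicting the surjectivity of $g(\omega)\in\Aut(M)$ together with $N\neq M$. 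This non-isomorphism argument is the only real obstacle; the rest is bookkeeping around the classification in Theorem~\ref{thm models of TRP}.
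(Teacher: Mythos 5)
Your proof is correct and follows essentially the same route as the paper: both directions are read off the classification in Theorem~\ref{thm models of TRP}, with the two models $(M^{\Omega^2},M^\Omega)_{\mbA_P}$ and $(M^{\Omega^2},N^\Omega)_{\mbA_P}$ witnessing non-categoricity when $M$ is non-compact. The only difference is that the paper dismisses the non-isomorphism of these two pairs as clear, whereas you supply the (correct) argument via Theorem~\ref{thm G^R} and constant random variables.
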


As said before, this shows that the property SFB defined in \cite{bbhSFB14} is not preserved by randomizations.

\begin{example} The theory of the randomization of a countable set (with no further structure) is not SFB. On the other hand, it is $\aleph_0$-stable, and the Roelcke compactification of $S_\infty\wr\Omega$ is Hilbert-representable.\end{example}

We turn to the preliminaries for the proof of Theorem~\ref{thm models of TRP}. Let $S\prec M^R$ be a submodel with auxiliary sort equal to $\mbA_\Omega$. We have discussed the elementary substructures of $M^R$ in Remark~\ref{rem submodels of M^R}. We know that $S$ is the image of an endomorphism $ht^*$ of $M^R$, where $h\in\End(M)^{\Omega^2}$ and $t\in\End(\Omega^2)$.

Then the image of $t^*\in\End(\mbA_{\Omega^2})$ must be $\mbA_\Omega$, and hence the main sort of $S$ is $$\{ht^*r:r\in M^{\Omega^2}\}=\{hs:s\in M^\Omega\}=S_h.$$ It follows that every model of $(T^R)_P$ is isomorphic to $\mbP_h$ for some $h\in\End(M)^{\Omega^2}$. We want to see that all these are indeed models of $(T^R)_P$.

Since $S_h$ is an elementary substructure of $S_1=M^\Omega$ (when endowed with the auxiliary sort~$\mbA_\Omega$), by Remark~\ref{rem smaller small model} it is enough to prove that $\mbP_1\models (T^R)_P$. In fact, we will give a proof of the following (which, incidentally, does not use the $\aleph_0$-categoricity of $M$).

\begin{prop}\label{prop models of TRP} Let $N\prec M$ be any elementary substructure. Then, $(M^{\Omega^2},N^\Omega)_{\mbA_P}\models (T^R)_P$.\end{prop}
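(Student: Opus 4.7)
The plan is to verify the uniform-realization criterion of Theorem~\ref{axioms for TP} for the pair $(M^{\Omega^2},N^\Omega)$. Fix $\epsilon>0$, a finite tuple $c$ from $M^R=M^{\Omega^2}$, a $1$-type $p\in S_x(N^\Omega c)$ of $T^R$, and finitely many $L^R$-formulas $\Phi_1,\dots,\Phi_k$. The task is to produce $a\in M^{\Omega^2}$ with $|\Phi_i(a,b,c)-\Phi_i(x,b,c)^p|<\epsilon$ for every tuple $b$ from $N^\Omega$ and every $i\leq k$.

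First, quantifier elimination for $T^R$, together with the expressibility of continuous pointwise operations on random variables in the auxiliary sort, reduces the problem to formulas of the form $\Phi_i(x,y)=\mbE\bigl(\chi_{A_i}\,\varphi_i(x,y)\bigr)$, where $A_i\subset\Omega^2$ is measurable and $\varphi_i$ is an $L$-formula; here $y$ absorbs the main-sort part of $c$, while the auxiliary-sort part of $c$ is approximated by simple functions to within $\epsilon/3$, giving rise to the $A_i$. Extending $p$ to a type $\tilde p\in S_x(M^R)$ and applying Lemma~\ref{lem types as random variables}, we obtain a Borel map $\omega\mapsto m_\omega\in\Reg(S_\Delta(M))$, for a suitable finite set $\Delta$ of $L$-formulas containing the $\varphi_i$, such that $\Phi_i(x,b,c)^p=\int_{A_i}\mbE^{m_\omega}\bigl(\varphi_i(\cdot,b(\omega))\bigr)\,d\mu(\omega)$.

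Writing $\omega=(\omega_0,\omega_1)\in\Omega_0\times\Omega_1$, each $b\in N^\Omega$ satisfies $b(\omega_0,\omega_1)=b(\omega_0)$ because $N^\Omega$ is measurable over the $\Omega_0$-factor. Fubini then reduces the uniform-in-$b$ condition to producing, for almost every $\omega_0\in\Omega_0$, a random variable $a_{\omega_0}\in M^{\Omega_1}$ such that
$$
\Bigl|\int\chi_{A_i^{\omega_0}}(\omega_1)\bigl[\varphi_i(a_{\omega_0}(\omega_1),n)-\mbE^{m_{\omega_0,\omega_1}}\bigl(\varphi_i(\cdot,n)\bigr)\bigr]\,d\omega_1\Bigr|<\epsilon
$$
for each $i\leq k$ and each $n$ in a fixed countable dense subset $N_0\subset N$, with the main-sort parameters of $c$ absorbed into $n$. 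Equicontinuity of the $\varphi_i$ in their parameters then upgrades the uniformity from $N_0$ to all of $N$. For each fixed $\omega_0$, producing $a_{\omega_0}$ amounts to approximating the measurable family $\omega_1\mapsto m_{\omega_0,\omega_1}$ by the type-pushforward of an $M$-valued random variable on $\Omega_1$. Since $M$ is approximately $\aleph_0$-saturated by $\aleph_0$-categoricity of $T$, the realized $\Delta$-types in $M$ form a weak-$^*$ dense subset of $S_\Delta(M)$; discretizing $\Omega_1$ into a sufficiently fine measurable partition and replacing each $m_{\omega_0,\omega_1}$ by an atomic approximation supported on realized types yields a simple coupling, which is realized by a simple $M$-valued random variable on $\Omega_1$ via the atomlessness of $\Omega_1$. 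The $a_{\omega_0}$ are assembled into $a\in M^{\Omega^2}$ measurably in $\omega_0$ by a Borel-selection argument in the spirit of Lemma~\ref{Borel selector}.

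The hard part will be carrying out the construction of $a_{\omega_0}$ simultaneously measurably in $\omega_0$ and uniformly in $n\in N_0$. The uniformity cannot be achieved by a naive finite-parameter saturation argument applied one $n$ at a time; instead, one discretizes $N_0$ simultaneously with $\Omega_1$, relying on a common modulus of continuity for the finitely many $\varphi_i$ in their parameters, and then invokes approximate $\aleph_0$-saturation of $M$ only on the resulting finite discretized condition. Once this measurable-and-uniform construction is in place, integrating over $\omega_0$ and applying Fubini produces an $a\in M^{\Omega^2}$ satisfying the criterion of Theorem~\ref{axioms for TP}.
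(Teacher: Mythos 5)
Your overall architecture (reduce to formulas $\mbE\llbracket\varphi_i(x,yz)\rrbracket$ with simple auxiliary parameters, view $p$ as a $\Reg(S^T_x(Nc))$-valued random variable via Lemma~\ref{lem types as random variables}, disintegrate over $\Omega_0$, and realize the fiberwise measures by $M$-valued random variables) matches the paper's. But there is a genuine gap at exactly the point you flag as ``the hard part'': the uniformity over \emph{all} parameters $b\in (N^{|y|})^\Omega$. Your proposed fix --- discretize a countable dense $N_0\subset N$ ``relying on a common modulus of continuity for the finitely many $\varphi_i$ in their parameters'' and then invoke approximate $\aleph_0$-saturation on the resulting finite condition --- does not work, because $N$ is in general not totally bounded: uniform continuity of $\varphi_i(x,y)$ in $y$ lets you pass from a $\delta$-net to all of $N$, but a countable dense subset of a non-compact $N$ admits no finite $\delta$-net, so there is no ``finite discretized condition'' to feed to saturation. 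Likewise, weak$^*$ density of realized types in $S_\Delta(M)$ only controls finitely many formulas-with-parameters at a time; the criterion of Theorem~\ref{axioms for TP} demands an approximation in the \emph{uniform} metric $d_\varphi(p,q)=\sup_b|\varphi(x,b)^p-\varphi(x,b)^q|$, which the logic topology does not see.

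The missing ingredient is stability, which the paper isolates in Lemma~\ref{stability facts} and uses in two ways. First, definability of types makes $(S_x(M),d_\varphi)$ \emph{separable}, so the type-valued map $h_c$ can be replaced by a countable-range map $j_c$ that is $\epsilon$-close uniformly over all parameters $b$ --- this is the legitimate substitute for your discretization of $N_0$. Second, for each type $q$ one gets (from indiscernible sequences plus a compactness argument) a sequence $(a^l_q)$ such that for \emph{every} $b$ at most $m$ indices $l$ violate the $\epsilon$-approximation of $\varphi(x,b)^q$; splitting each fiber $A_{c,q}$ into $k$ pieces of equal conditional measure and averaging turns this into an error of $m/k+\epsilon$ uniformly in $b$. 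Your atomic weak$^*$ approximation supported on realized types has no analogue of this counting bound: the set of $\omega_1$'s (or of atoms) on which the approximation fails can depend on $b$ with no uniform control. Without these two stability inputs the argument does not close, and indeed the statement is only being proved for stable $T$ (the standing hypothesis of the section).
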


We need some preliminary lemmas.

\begin{lem}\label{lem family of finite measures} Let $A\subset\Omega^2$ be a Borel subset. Suppose we are given a compact metrizable space $X$ together with a weak$^*$-measurable family $(p_{\omega_0})_{\omega_0\in\Omega_0}$ of finite Borel measures on $X$, $$\omega_0\in\Omega_0\mapsto p_{\omega_0}\in C(X)^*,$$ with $p_{\omega_0}(X)=\mu(A_{\omega_0})$; here, $A_{\omega_0}\subset\Omega_1$ is the section of $A$ at $\omega_0$. Then, there is a measurable function $h\colon A\to X$ such that
\begin{equation}\label{eq lem fam of fin meas}\tag{*}
\int_X fdp_{\omega_0}=\int_{A_{\omega_0}}f\circ h_{\omega_0}d\omega_1
\end{equation}
for every $f\in C(X)$ and almost every $\omega_0\in\Omega_0$; here, $h_{\omega_0}(\omega_1)=h(\omega_0,\omega_1)$.\end{lem}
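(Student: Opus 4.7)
The plan is to construct $h$ concretely by a fiberwise inverse-CDF procedure, after reducing to the case $X=[0,1]$.

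First, using the Kuratowski isomorphism theorem, I would fix a Borel bijection $\varphi\colon X\to X'$ with $X'\subset [0,1]$ Borel. The pushforwards $q_{\omega_0}=\varphi_* p_{\omega_0}$ depend weak*-measurably on $\omega_0$: the weak*-measurability of $p_{\omega_0}$ implies, via a monotone-class argument, that $\omega_0\mapsto p_{\omega_0}(B)$ is measurable for every Borel $B\subset X$, hence $\omega_0\mapsto\int g\, dp_{\omega_0}$ is measurable for every bounded Borel $g$, in particular for $g=f\circ\varphi$ with $f\in C([0,1])$. It therefore suffices to produce a measurable $k\colon A\to[0,1]$ satisfying (*) relative to the $q_{\omega_0}$; then $h=\varphi^{-1}\circ k$ solves the problem.

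Second, I would normalize the fibers. Set
$$\psi(\omega_0,\omega_1)=\int_0^{\omega_1}\chi_A(\omega_0,s)\, ds,$$
jointly measurable by Fubini, so that for every $\omega_0$ the map $\psi_{\omega_0}\colon A_{\omega_0}\to[0,\mu(A_{\omega_0}))$ pushes Lebesgue on $A_{\omega_0}$ to Lebesgue on $[0,\mu(A_{\omega_0}))$. Define the cumulative function $G(\omega_0,x)=q_{\omega_0}([0,x])$; measurability in $\omega_0$ and right-continuous monotonicity in $x$ yield joint measurability. Its generalized inverse
$$G^{-1}(\omega_0,u)=\inf\{x\in[0,1]:G(\omega_0,x)\geq u\}$$
is jointly measurable via the identity $\{(\omega_0,u):G^{-1}(\omega_0,u)\leq x\}=\{(\omega_0,u):G(\omega_0,x)\geq u\}$, and for each $\omega_0$ it pushes Lebesgue on $[0,\mu(A_{\omega_0}))$ forward to $q_{\omega_0}$. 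Setting $k(\omega_0,\omega_1)=G^{-1}(\omega_0,\psi(\omega_0,\omega_1))$ therefore gives, for every $f\in C([0,1])$ and every $\omega_0$,
$$\int_{A_{\omega_0}} f\circ k_{\omega_0}\, d\omega_1=\int_0^{\mu(A_{\omega_0})} f(G^{-1}(\omega_0,u))\, du=\int_{[0,1]} f\, dq_{\omega_0},$$
which is the analogue of (*).

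The main obstacle is really bookkeeping rather than conceptual: one must track joint measurability at every step and handle the (mildly delicate) passage from measures on $X$ to measures on $[0,1]$ through the Borel isomorphism $\varphi$, since $f\circ\varphi$ is only Borel on $X$, not continuous. Once that is established, the explicit inverse-CDF construction delivers (*) pointwise in $\omega_0$ and uniformly in $f\in C(X)$, so no separate argument to select a common full-measure set of $\omega_0$ is needed.
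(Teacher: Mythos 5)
Your proof is correct, but it takes a genuinely different route from the paper's. The paper first produces, for each fixed $\omega_0$, \emph{some} measure-preserving map $A_{\omega_0}\to (X,p_{\omega_0})$ abstractly (the separable measure algebra of $(X,p_{\omega_0})$ embeds into the atomless measure algebra of $A_{\omega_0}$), and then invokes the Jankov--von Neumann uniformization theorem to select such a map measurably in $\omega_0$, after checking that the relevant set of pairs $(\omega_0,h_0)$ is Borel. You avoid measurable selection altogether: after transporting everything to $[0,1]$ by a Borel isomorphism, you write the fiberwise map down explicitly as the composition of the cumulative indicator $\psi$ (which carries $\mu|_{A_{\omega_0}}$ to Lebesgue measure on an interval) with the quantile function $G^{-1}(\omega_0,\cdot)$ of $q_{\omega_0}$, and joint measurability becomes a routine verification. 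Your approach is more constructive --- the only nontrivial input is the Kuratowski isomorphism theorem rather than a uniformization theorem --- and it yields the pushforward identity for \emph{every} $\omega_0$ at the level of $[0,1]$, whereas the paper's selector is only defined almost surely. Two points deserve to be spelled out when returning to $X$: first, $k_{\omega_0}$ lands in $X'=\varphi(X)$ only $\mu$-almost everywhere on $A_{\omega_0}$ (because $q_{\omega_0}$ is concentrated on $X'$), so $h=\varphi^{-1}\circ k$ must be assigned an arbitrary value of $X$ on the resulting null set, which costs nothing; second, transferring (*) from $C([0,1])$ back to $C(X)$ requires the identity $\int f\,dq_{\omega_0}=\int_{A_{\omega_0}}f\circ k_{\omega_0}\,d\omega_1$ for the bounded Borel function $f\circ\varphi^{-1}$, which follows because two finite Borel measures on $[0,1]$ agreeing on all continuous functions coincide on all Borel sets. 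These are exactly the bookkeeping items you flagged, and they go through.
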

\begin{proof}
We first note that for each $\omega_0$ we can find a measurable function $h_{\omega_0}\colon A_{\omega_0}\to X$ satisfying (\ref{eq lem fam of fin meas}). Indeed, the measure algebra of $(X,p_{\omega_0})$ is separable, so it can be embedded in the measure algebra of $(A_{\omega_0},\mu)$, since the latter is atomless. By duality we get a measure preserving map $h_{\omega_0}\colon A_{\omega_0}\to X$, which is what we wanted.

We have to ensure that we can choose $h_{\omega_0}$ in a measurable way. We consider partial measurable functions from $\Omega_1$ to $X$: say $S=(X\cup\{*\})^{\Omega_1}$, and for $h_0\in S$ define $s(h_0)=h_0^{-1}(X)$ to be the support of $h$. Now, it is not difficult to see that the set $$E=\{(\omega_0,h_0)\in\Omega_0\times S:s(h_0)=A_{\omega_0}\text{ and }\mbE^{p_{\omega_0}}(f)=\mbE^\mu(\chi_{A_{\omega_0}}f\circ h_0)\text{ for all }f\in C(X)\}$$ is Borel. By the previous paragraph, the projection of $E$ to $\Omega_0$ is $\Omega_0$, so from the Jankov--von Neumann uniformization theorem (see \cite[\textsection 18.A]{kechrisDST}; note that analytic sets are Lebesgue measurable) we obtain a measurable function $h\colon\Omega_0\to S$ such that $h(\omega_0)\in E_{\omega_0}$ almost surely. By the natural identification $S^{\Omega_0}\simeq (X\cup\{*\})^{\Omega_0\times\Omega_1}$, this induces a measurable function $h\colon A\to X$ satisfying the requirements of the lemma.
\end{proof}

\begin{lem}\label{stability facts} Let $M$ be a separable stable metric structure and $\varphi(x,y)$ be any formula.
\begin{enumerate}
\item The uniform pseudo-distance $$d_\varphi(p,q)=\sup_{b\in M^{|y|}}|\varphi(x,b)^p-\varphi(x,b)^q|$$ on the type space $S_x(M)$ is separable. Moreover, every open set for the distance $d_\varphi$ is Borel measurable for the logic topology of $S_x(M)$.
\item For every $\epsilon>0$ there is a natural number $m\in\mbN$ such that for every $q\in S_x(M)$ there exists a sequence $(a_l)_{l\in\mbN}\subset M^{|x|}$ with the property that, for any tuple $b\in M^{|y|}$, $$|\{l\in\mbN:|\varphi(a_l,b)-\varphi(x,b)^q|\geq\epsilon\}|\leq m.$$
\end{enumerate}
\end{lem}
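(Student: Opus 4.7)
\emph{Part (1).} Fix $p \in S_x(M)$. For each $b \in M^{|y|}$ the map $q \mapsto |\varphi(x,b)^p - \varphi(x,b)^q|$ is continuous on $S_x(M)$ for the logic topology. Since $\varphi$ is uniformly continuous in $y$ and $M$ is separable, I would fix a countable dense $D \subseteq M^{|y|}$ and write
\[
d_\varphi(p,q) \;=\; \sup_{b \in D}\bigl|\varphi(x,b)^p - \varphi(x,b)^q\bigr|,
\]
so that $d_\varphi(p,\cdot)$ is a countable supremum of continuous functions, hence lower semi-continuous for the logic topology. The open balls $\{q : d_\varphi(p,q)<r\} = \bigcup_n\{q : d_\varphi(p,q)\leq r-1/n\}^c$ are therefore $F_\sigma$, and in particular Borel. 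Separability of $d_\varphi$ on $S_x(M)$ factors through the canonical projection $S_x(M) \to S_\varphi(M)$ and reduces to the known fact from local stability in continuous logic: for stable $\varphi$, the $\varphi$-type space $(S_\varphi(M),d_\varphi)$ is totally bounded, hence separable.

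\emph{Part (2).} I would extract the constant $m=m(\epsilon)$ from the equivalent ``no long $\epsilon$-alternation'' characterization of stability in continuous logic: there is a finite $m\in\mbN$ bounding, uniformly in the parameter $b$ (in any model), the number of indices along any $\varphi$-indiscernible sequence $(a_l)$ at which $\varphi(a_l,b)$ can deviate by at least $\epsilon$ from its majority value.

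Given $q \in S_x(M)$, the plan is to realize $q$ as the common $\varphi$-type of a $\varphi$-indiscernible sequence inside $M$. Under $\aleph_0$-categoricity of $T$ (the main setting of the paper), $M$ is $\aleph_0$-saturated, and I would construct such $(a_l)\subset M^{|x|}$ inductively, using the definability of $q|_\varphi$ over $M$ (a consequence of stability) together with saturation to choose $a_l \in M$ realizing, at each step, an $\epsilon_l$-approximation of $q$ over the previously chosen tuple $(a_0,\ldots,a_{l-1})$, with $\epsilon_l$ going to zero fast enough that the resulting sequence is $\varphi$-indiscernible over $M$ up to any prescribed tolerance. The bound from the previous paragraph then immediately yields the desired oscillation estimate against every $b\in M^{|y|}$. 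For a general separable $M$ one instead goes to a sufficiently saturated extension $\bar M \succ M$, takes a Morley sequence $(a_l^\ast)$ of the non-forking extension $\bar q$ of $q$, and transfers it into $M$ by a diagonal approximation relying on separability of $M$; this costs only a modification $\epsilon\leadsto\epsilon/2$ and hence a mild inflation of $m$.

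\emph{Main obstacle.} The delicate step is transferring a Morley sequence from $\bar M$ to $M$ when $M$ is not $\aleph_0$-saturated, while keeping the uniform oscillation bound. In the $\aleph_0$-categorical case, however, this step is not needed, and both conclusions come out of a direct combination of the stability of $\varphi$ with elementary separability arguments on the logic topology.
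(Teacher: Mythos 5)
Part (1) is essentially fine, with one slip: $(S_\varphi(M),d_\varphi)$ is \emph{not} totally bounded for stable $\varphi$ (already in classical logic a stable formula can have infinitely many $\varphi$-types over a countable model, pairwise at $d_\varphi$-distance $1$). The correct local-stability fact, and the one the paper invokes, is that stability gives uniform definability of $\varphi$-types, and together with separability of $M$ this bounds the \emph{density character} of $(S_\varphi(M),d_\varphi)$ by that of $M$; separability, not total boundedness, is what you get and what you need. Your Borel-measurability argument is fine once separability is in hand (you need it so that every $d_\varphi$-open set is a \emph{countable} union of balls).

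In part (2) there is a genuine gap, and it is exactly the step you flag as the ``main obstacle'' and then defer. The alternation bound $m$ you extract holds for genuinely $\varphi$-indiscernible sequences, but such a sequence with limit type $q$ generally does not live in $M$: its terms realize types close to $q$, which need not be realized (even approximately in the metric) in $M$, and approximate $\aleph_0$-saturation in the $\aleph_0$-categorical case only yields \emph{approximately} indiscernible sequences with errors $\epsilon_l$. You never justify why approximate indiscernibility still forces the counting bound \emph{uniformly over all} $b\in M^{|y|}$; a ``diagonal approximation'' controls only finitely many instances per term, and the claim that this ``costs only $\epsilon\leadsto\epsilon/2$'' is precisely what has to be proved. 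The paper closes this gap with a compactness argument: if the bound $m$ held for indiscernible sequences but failed for $\Delta$-$\delta$-indiscernible ones for every finite $\Delta$ and $\delta>0$, one could extract (by compactness) an infinite indiscernible sequence violating it, a contradiction; uniform definability of types makes $\Delta$, $\delta$ and $m$ uniform in $q$. Then, since in a stable theory every type over a model is finitely satisfiable in it, one takes \emph{any} sequence in $M^{|x|}$ converging to $q$ in the logic topology and extracts a $\Delta$-$\delta$-indiscernible subsequence by Ramsey's theorem. This needs no saturation and works for arbitrary separable stable $M$, which matters because the lemma is stated (and used) without assuming $\aleph_0$-categoricity. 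Your outline is recoverable, but only after supplying this ``finitely many formulas and a positive tolerance suffice'' step, which is the real content of the proof.
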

\begin{proof}
(1). The first assertion follows from the separability of $M$ and the definability of types in stable theories. See for example \cite{ben13}, Corollary 4. The second follows directly from the separability of $M$.

(2). By stability, there is a natural number $m$ such that, whenever $a_l$ is an indiscernible sequence with limit type $q$ and $b$ is any tuple, the counting inequality displayed in the statement is satisfied. The following argument by compactness shows that there are a finite set of formulas $\Delta$ and a positive number $\delta$ such that the same is true whenever $a_l$ is a \emph{$\Delta$-$\delta$-indiscernible} sequence (in the sense defined in \cite[\textsection 1.4]{iba14}) converging to~$q$. Suppose this does not hold. Let $d_q\varphi$ be the $\varphi$-definition of~$q$. Then, for any finite $\Delta$ and $\delta>0$ we can find a large finite $\Delta$-$\delta$-indiscernible sequence $a_l$ and an element $b$ such that, for odd $l$, $$|\varphi(a_l,b)-d_q\varphi(b)|\geq\epsilon,$$ whereas, for even $l$, $$|\varphi(a_l,b)-d_q\varphi(b)|<\epsilon/2.$$ By compactness, we get an infinite indiscernible sequence $a_l$ and an element $b$ with this property, which yields a contradiction. The bound $m$ can be chosen uniformly in $q$ since types are uniformly definable.

Now, if $q\in S_x(M)$ is any type, we can take any sequence in $M$ converging to $q$ and extract by Ramsey's theorem a $\Delta$-$\delta$-indiscernible subsequence $a_l$. The lemma follows.
\end{proof}

\begin{proof}[Proof of Proposition \ref{prop models of TRP}] We have to check the condition of Theorem \ref{axioms for TP} for the pair $(M^{\Omega^2},N^\Omega)_{\mbA_P}$. For simplicity, we will only check it for basic formulas of the form $\mbE\llbracket \varphi_i(x,yz)\rrbracket$, so in particular the tuples $xyz$ will only contain variables from the \emph{main sort}. It is not difficult to see that this is actually enough. Moreover, it suffices to check the condition for $z$-tuples consisting of \emph{simple} elements of $M^{\Omega^2}$, that is, random variables of finite range.

So we fix formulas $\mbE\llbracket\varphi_i(x,yz)\rrbracket$, $i<n$, we fix a simple $z$-tuple $t$, and a type $p\in S^{T^R}_x(N^\Omega t)$. Let $C\subset M^{|z|}$ be the (finite) range of~$t$. We fix $c\in C$, then set $A_c=t^{-1}(c)\subset\Omega^2$. By taking any extension of $p$ to a type over $M^{\Omega^2}$, we may apply Lemma~\ref{lem types as random variables} and see $p$ as a random variable $p\colon\Omega^2\to\Reg(S^T_x(M))\to\Reg(S^T_x(Nc))$.

We consider the type space $X=S^T_x(Nc)$. For each $\omega_0\in\Omega_0$ and $f\in C(X)$, say with $f$ induced by a formula $\varphi(x,bc)$, we set $$\langle f,p_{\omega_0}\rangle=\int_{(A_c)_{\omega_0}}\mbE^{p(\omega_0,\omega_1)}(\varphi(x,bc))d\omega_1.$$ This defines a finite measure $p_{\omega_0}\in C(X)^*$ with $p_{\omega_0}(X)=\mu((A_c)_{\omega_0})$. Applying Lemma~\ref{lem family of finite measures}, we get a measurable function $h_c\colon A_c\to X$ that satisfies $$\int_{(A_c)_{\omega_0}}\mbE^{p(\omega_0,\omega_1)}(\varphi(x,bc))d\omega_1=\int_{(A_c)_{\omega_0}}\varphi(x,bc)^{h_c(\omega_0,\omega_1)}d\omega_1$$ for almost every $\omega_0$ and every $Nc$-formula $\varphi(x,bc)$.

Let $\epsilon>0$. Using the first item of Lemma \ref{stability facts}, we can find a countable set $J\subset S^T_x(M)$ and measurable functions $j_c\colon A_c\to J$ for each $c\in C$ such that $$|\varphi_i(x,bc)^{h_c(\omega)}-\varphi_i(x,bc)^{j_c(\omega)}|\leq\epsilon$$ for every $b\in N^{|y|}$, $c\in C$, $i<n$ and $\omega\in\Omega^2$. Next, we apply the second item of the lemma to get sequences $(a_q^l)_{l\in\mbN}\subset M$ for each $q\in J$ and a natural number $m\in\mbN$ such that, for each of the formulas $\varphi_i(x,yz)$, $i<n$, and for any $b\in N^{|y|}$, $c\in C$ and $q\in J$, we have $$|\{l\in\mbN:|\varphi_i(a_q^l,bc)-\varphi_i(x,bc)^q|\geq\epsilon\}|\leq m.$$ In any case, $|\varphi_i(a_q^l,bc)-\varphi_i(x,bc)^q|\leq 1$ since we assume formulas are $[0,1]$-valued.

Take $k\in\mbN$ with $m/k<\epsilon$. For each $c\in C$ and $q\in J$ we can choose a Borel partition $\{A_{c,q}^l\}_{l<k}$ of $A_{c,q}\coloneqq (h_c)^{-1}(q)\subset A_c$ such that, for almost every $\omega_0\in\Omega_0$, we have $$\mu((A_{c,q}^l)_{\omega_0})=\frac{1}{k}\mu((A_{c,q})_{\omega_0}).$$ Finally, we define $r:\Omega^2\to M$ by $$r=\sum_{c\in C,q\in J,l<k}a_q^l\chi_{A_{c,q}^l}.$$

In this way, for any $i<n$ and any tuple of random variables $s\in (N^{|y|})^\Omega$ (which depends only on the variable $\omega_0\in\Omega_0$), we get
\begin{align*}
\left|\mbE\llbracket\varphi_i(r,st)\rrbracket-\mbE\llbracket\varphi_i(x,st)\rrbracket^p\right| & \leq \sum_{c\in C}\left|\int_{A_c}\varphi_i(r(\omega_0,\omega_1),s(\omega_0)c)-\mbE^{p(\omega_0,\omega_1)}(\varphi_i(x,s(\omega_0)c))d\mu\right|
\end{align*}
\vspace{-.3cm}
\begin{align*}
\hspace{2.3cm} & \leq \sum_{c\in C}\int_{\Omega_0}\left|\int_{(A_c)_{\omega_0}}\varphi_i(r(\omega_0,\omega_1),s(\omega_0)c)-\mbE^{p(\omega_0,\omega_1)}(\varphi_i(x,s(\omega_0)c))d\omega_1\right| d\omega_0\\
& = \sum_{c\in C}\int_{\Omega_0}\left|\int_{(A_c)_{\omega_0}}\varphi_i(r(\omega_0,\omega_1),s(\omega_0)c)-\varphi_i(x,s(\omega_0)c)^{h_c(\omega_0,\omega_1)}d\omega_1\right| d\omega_0\\
& \leq \epsilon + \sum_{c\in C}\int_{\Omega_0}\int_{(A_c)_{\omega_0}}\left|\varphi_i(r(\omega_0,\omega_1),s(\omega_0)c)-\varphi_i(x,s(\omega_0)c)^{j_c(\omega_0,\omega_1)}\right| d\omega_1 d\omega_0\\
& = \epsilon + \sum_{c\in C,q\in J,l<k}\int_{\Omega_0}\int_{(A_{c,q}^l)_{\omega_0}}\left|\varphi_i(a_q^l,s(\omega_0)c)-\varphi_i(x,s(\omega_0)c)^q\right|d\omega_1 d\omega_0\\
& = \epsilon + \sum_{c\in C,q\in J}\int_{\Omega_0}\frac{1}{k}\mu((A_{c,q})_{\omega_0})\sum_{l<k}\left|\varphi_i(a_q^l,s(\omega_0)c)-\varphi_i(x,s(\omega_0)c)^q\right|d\omega_0\\
& \leq \epsilon + \sum_{c\in C,q\in J}\int_{\Omega_0}\left(\frac{m}{k}+\epsilon\right)\mu((A_{c,q})_{\omega_0})d\omega_0\\
& <3\epsilon.
\end{align*}
We have thus verified the condition of Theorem \ref{axioms for TP} for $(M^{\Omega^2},N^{\Omega})$.
\end{proof}

Together with the discussion preceding Proposition~\ref{prop models of TRP}, this completes the proof of Theorem~\ref{thm models of TRP}.

\begin{rem}\label{rem beautiful pairs rando} Suppose $M$ is a classical structure, and that $(M,N)$ is in fact a beautiful pair of (countable) models of $T$. In this case, the argument of Proposition~\ref{prop models of TRP} becomes much simpler, and yields more. Indeed, resuming the argument after the third paragraph, for every type $q\in S^T_x(Nc)$ we can choose a realization $a_c(q)\in M^{|x|}$. Then we take $r=\sum_{c\in C}(a_c\circ h_c)\chi_{A_c}$, and we see readily that $r$ is a realization of $p$. It follows that $(M^{\Omega^2},N^\Omega)_{\mbA_P}$ is a beautiful pair of models of $T^R$.\end{rem}

By using the same ideas we obtain the following, which is the metric generalization of Theorem~4.1 of \cite{benkei}.

\begin{theorem}
Let $T$ be a metric theory in a countable language. If $T$ is $\aleph_0$-stable, then so is $T^R$.
\end{theorem}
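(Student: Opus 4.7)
The plan is to deduce the $\aleph_0$-stability of $T^R$ from that of $T$ by exploiting the measure-theoretic description of randomized type spaces. Recall that $\aleph_0$-stability of a metric theory $T$ says that $(S_x(M_0),d)$ is separable in the $d$-metric for every separable model $M_0$; for $T^R$ it suffices to verify this for the Borel randomization $M^R$ of some fixed separable $M\models T$, since the property transfers across separable models of a common theory.

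The key tool is Lemma~\ref{lem types as random variables}: together with the quantifier elimination for $T^R$, it yields a natural homeomorphism $S_x(M^R)\simeq\mcS(\Omega,S_x(M))$, under which a type $p$ is represented by a measurable map $\omega\mapsto p(\omega)\in\Reg(S_x(M))$. The main step is to identify the $d$-metric on $S_x(M^R)$ with the integrated Wasserstein-$1$ metric:
\[
d(p,q)=\int W_1\bigl(p(\omega),q(\omega)\bigr)\,d\mu(\omega),
\]
with $W_1$ computed from the $d$-metric on $S_x(M)$. The inequality $\ge$ is immediate, since any pair of realizations of $p,q$ in a common elementary extension produces a fiberwise coupling of $p(\omega),q(\omega)$. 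For $\le$ I would, starting from an $\epsilon$-optimal measurable family of couplings $(\pi_\omega)_{\omega\in\Omega}$, construct joint realizations of $p$ and $q$ in an elementary extension that are $\pi_\omega$-distributed on each fiber; this calls for a measurable-selection argument of the same flavor as in Lemma~\ref{lem family of finite measures}.

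Once the metric is identified, separability is routine: by $\aleph_0$-stability of $T$ there is a countable $d$-dense $\{q_i\}\subset S_x(M)$; rational convex combinations of Diracs at the $q_i$ are $W_1$-dense in $\Reg(S_x(M))$; and step functions associated to a countable family of rational-measure partitions of $\Omega$ taking values in these rational convex combinations form a countable dense subset of $\mcS(\Omega,S_x(M))$ for the integrated metric. The main obstacle I foresee is the $\le$ half of the metric identification: realizing prescribed measurable couplings in an elementary extension requires care. A direct alternative is to mimic the construction of the finite-range random variable $r=\sum a^l_q\chi_{A^l_{c,q}}$ from the proof of Proposition~\ref{prop models of TRP} and to verify that its $T^R$-type approximates $p$ in $d$; this route mirrors the classical argument of Theorem~4.1 in \cite{benkei} more closely and bypasses the Wasserstein identification entirely.
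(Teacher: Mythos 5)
There are two genuine gaps. The first is the opening reduction: it is not true that $d$-separability of $S_x(\mathcal N_0)$ for \emph{one} separable model $\mathcal N_0$ ``transfers across separable models of a common theory.'' The restriction map $S_x(\mathcal N_1)\to S_x(\mathcal N_0)$ is surjective and $1$-Lipschitz only when $\mathcal N_0\prec\mathcal N_1$, so separability passes from larger models to smaller ones, not the other way; and since $T$ is not assumed $\aleph_0$-categorical here, an arbitrary separable model of $T^R$ (or separable parameter set) need not sit inside a fixed $M^R$. To repair this you must choose the base model to realize all types over $\emptyset$ (possible by $\aleph_0$-stability of $T$) and then \emph{prove} that its randomization realizes all $\emptyset$-types of $T^R$ — but that is an instance of the very realization problem at the heart of the theorem, so the reduction cannot be waved through.

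The second gap is the inequality $d(p,q)\le\int W_1\bigl(p(\omega),q(\omega)\bigr)\,d\mu$, which you correctly flag as the main obstacle but do not prove; all the substance of the theorem is hidden there. To realize an $\epsilon$-optimal family of couplings you need, measurably in a pair of types $(u,v)$, a pair of realizations $(a,b)$ in a common separable elementary extension with $d(a,b)$ close to $d(u,v)$. Producing such a selector is exactly the nontrivial step: the paper does it by showing that the realization map $M^{|x|}\to S_x(N)$ admits a Borel selector, which uses that under $\aleph_0$-stability the metric topology on the type space is Polish and the map is metrically open (so images of open sets are $F_\sigma$ for the logic topology), combined with Lemma~\ref{lem family of finite measures} to convert fiberwise measures into point-valued maps over the extra randomness $\Omega_1$. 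Citing ``a measurable-selection argument of the same flavor as in Lemma~\ref{lem family of finite measures}'' does not cover this, since that lemma selects points distributed according to a measure on a \emph{compact} space, whereas here you must select near-optimal pairs of \emph{realizations} in a model, which is where $\aleph_0$-stability is actually used. Your closing fallback — building a finite-range $r=\sum a_q^l\chi_{A_{c,q}^l}$ as in Proposition~\ref{prop models of TRP} — is essentially the paper's actual proof (in the cleaner exact form of its Remark on beautiful pairs: compose a Borel selector with the random type), so once you commit to that route the Wasserstein apparatus and its attendant difficulties become unnecessary. The final separability count for $\mcS(\Omega,S_x(M))$ is fine, but it is the easy part.
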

\begin{proof}
Since $T$ is $\aleph_0$-stable, there is an elementary pair $(M,N)$ of separable models of $T$ such that $N$ realizes every type over the empty set (possibly in countably many variables) and $M$ realizes every type over $N$. (In fact, there is even a separable beautiful pair, as can be seen using \cite{benusv-d-finiteness}, Proposition~1.16.) That is, the canonical map $\pi\colon M^{|x|}\to S^T_x(N)$ is surjective. We claim that it admits a Borel selector. By the same argument of Lemma~\ref{Borel selector}, it suffices to check that $\pi(U)$ is Borel for every open set $U\subset M^{|x|}$. We will check it first for the \emph{metric topology} of $S^T_x(M)$. Let $a\in U$ and take $\epsilon>0$ such that $b\in U$ whenever $d(a,b)<\epsilon$. If $d(\pi(a),q)<\epsilon$, then by saturation there are $a',b\in M^{|x|}$ such that $\pi(a')=\pi(a)$, $\pi(b)=q$ and $d(a',b)<\epsilon$. Hence, $\pi(U)$ is open for the metric topology. Now, since $T$ is $\aleph_0$-stable and $N$ is separable, the metric topology of the type space is Polish, and it follows that $\pi(U)$ is an $F_\sigma$ set for the usual compact topology. We deduce that there is a Borel selector for $\pi$, say $a\colon S^T_x(N)\to M$.

Now we may proceed as in Remark~\ref{rem beautiful pairs rando} (ignoring the variable $z$), to show that $M^{\Omega^2}$ realizes every type over $N^\Omega$. Indeed, given $p\in S^{T^R}_x(N^\Omega)$, there is $h\colon\Omega^2\to S^T_x(N)$ such that $\int\mbE^{p(\omega_0,\omega_1)}(\varphi(x,b))d\omega_1=\int\varphi(x,b)^{h(\omega_0,\omega_1)}d\omega_1$ for any $\omega_0,b,\varphi$. If we define $r=a\circ h\in M^{\Omega^2}$, then $r$ is a realization of $p$.

Similarly, $N^\Omega$ realizes every type over the empty set. Then, the pair $(M^{\Omega^2},N^\Omega)_{\mbA_P}$ witnesses that $T^R$ is $\aleph_0$-stable.
\end{proof}

It had already been observed in \cite{bbhSFB14} that an $\aleph_0$-stable, $\aleph_0$-categorical theory need not be SFB. Namely, the theory $AL_pL$ of atomless $L^p$ Banach lattices (for any fixed $p\in [1,\infty)$) is $\aleph_0$-stable and admits only one separable model up to isomorphism, but the corresponding theory $AL_pL_P$ of beautiful pairs admits exactly two non-isomorphic models. To this example we can now add any randomized theory $T^R$ where $T$ is an $\aleph_0$-stable, $\aleph_0$-categorical theory with a non-compact model.

Nevertheless, they point out in \cite{bbhSFB14} that the theory $AL_pL_P$ is $\aleph_0$-categorical \emph{up to arbitrarily small perturbations of the predicate $P$}. For a general theory $T$, this means that for every $\epsilon>0$ and any two separable models $(M,N),(M',N')\models T_P$, there exists an isomorphism $\rho\colon M\to M'$ such that $$|d(x,N)-d(\rho(x),N')|\leq\epsilon$$ for every $x\in M$. If $T_P$ has this property, let us say that $T$ is \emph{approximately SFB}. Then, it was conjectured in \cite{bbhSFB14} that an $\aleph_0$-stable, $\aleph_0$-categorical theory should be approximately SFB. Our new examples give an interesting family to test this conjecture.

\begin{question} Let $T$ be an SFB theory, for instance, a classical $\aleph_0$-stable, $\aleph_0$-categorical theory. Is it true that the randomized theory $T^R$ is approximately SFB?\end{question}

\subsection{Automorphisms of pairs}

Since this work originated in the study of automorphism groups of randomized structures, let us finish with a description of the automorphism group of a pair $(M^{\Omega^2},N^\Omega)_{\mbA_P}$, in the fashion of Theorem~\ref{thm G^R}.

We begin by describing the automorphism group of the auxiliary sort, $H^R_P\coloneqq\Aut(\mbA_P)$. Via the natural isomorphism $[0,1]^{\Omega^2}\simeq ([0,1]^{\Omega_1})^{\Omega_0}$, we see that the structure $\mbA_P$ (a model of $ARV_P$) can be in a sense identified with the randomization $(\mbA_{\Omega_1})^R$ (a model of $ARV^R$): they are bi-interpretable. In particular, they have the same automorphism groups: $$H^R_P=\Aut(\Omega_1)\wr\Omega_0.$$

Now we let $M$ be a separably categorical, stable structure, and we take $G=\Aut(M)$. We fix an elementary substructure $N\prec M$, and we consider the automorphism group of the pair, $$G_P\coloneqq\Aut(M,N),$$ which is the subgroup of all $g\in G$ that preserve the predicate $P(x)=d(x,N)$ (equivalently: that fix $N$ setwise). Similarly, we let $$G^R_P\coloneqq\Aut((M^{\Omega^2},N^\Omega)_{\mbA_P})$$ be the automorphism group of the induced model of $(T^R)_P$, which is the subgroup of $\Aut(M^R)$ (=$G\wr\Omega^2$) fixing $N^\Omega$ and $\mbA_\Omega$ setwise. Furthermore, we consider the subgroup $$G_P^*\coloneqq (G_P)^{\Omega^2}\cap G^R_P.$$

\begin{lem} We have $G_P^*=\{g\in G^{\Omega^2}:g|_{N^\Omega}\in\Aut(N)^\Omega\}$. Moreover, if $g\in G^R_P$ is the identity on the auxiliary sort $\mbA_P$, then $g\in G_P^*$.\end{lem}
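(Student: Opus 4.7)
The plan is to exploit the identification of $N^\Omega \subset M^{\Omega^2}$ with the space of $N$-valued random variables depending only on the $\Omega_0$-coordinate; the key interplay is that an element $g \in G^{\Omega^2}$ preserves $N^\Omega$ setwise precisely when $g(\omega_0,\omega_1)|_N$ takes values in $N$ and is independent of $\omega_1$. All the null-set gymnastics will be handled uniformly by fixing once and for all a countable dense subset $D \subset N$, which exists since $N$ is separable.

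For $(\subseteq)$ in the first equality, take $g \in G_P^*$. For each $d \in D$, viewed as a constant element of $N^\Omega$, the random variable $gd$ lies in $N^\Omega$, so $(\omega_0,\omega_1)\mapsto g(\omega_0,\omega_1)(d)$ takes values in $N$ and does not depend on $\omega_1$, off a null set depending on $d$. Intersecting over the countable family $D$ and invoking density of $D$ together with the fact that each $g(\omega_0,\omega_1) \in \Aut(M)$ is an isometry, the same properties pass to arbitrary elements of $N$. Combined with $g(\omega) \in G_P$ a.e., this produces a well-defined map $h\colon \Omega_0 \to \Aut(N)$ given by $h(\omega_0) = g(\omega_0,\omega_1)|_N$; its measurability follows by testing against the countable family $\{d : d \in D\}$. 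By construction $gs = hs$ for every $s \in N^\Omega$, so $g|_{N^\Omega} = h \in \Aut(N)^\Omega$.

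For $(\supseteq)$ and, similarly, for the second assertion, the core claim to establish is: \emph{if $g \in G^{\Omega^2}$ preserves $N^\Omega$ setwise, then $g(\omega) \in G_P$ a.e.} The argument mirrors the previous paragraph in reverse: for $d \in D$, setwise preservation gives $g(\omega_0,\omega_1)(d) \in N$ a.e., so by density and continuity $g(\omega)(N) \subseteq N$ a.e.; applying the same to $g^{-1}$ (which also preserves $N^\Omega$, since $g(N^\Omega) = N^\Omega$) yields the reverse inclusion, whence $g(\omega) \in \Aut(M,N) = G_P$ a.e. For $(\supseteq)$ in the first equality the hypothesis $g|_{N^\Omega} = h \in \Aut(N)^\Omega$ immediately supplies setwise preservation of $N^\Omega$, so the core claim finishes the job. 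For the second assertion, the hypothesis that $g$ is the identity on $\mbA_P$ (in particular on $\mbA_{\Omega^2}$) places $g$ in $G^{\Omega^2}$ by Lemma~\ref{identity on the auxiliary sort}, while membership in $G^R_P$ supplies setwise preservation of $N^\Omega$.

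The only delicate point is the transfer of almost-sure statements from the countable dense set $D$ to all of $N$; this is the standard continuity/density trick, once the null sets have been unified via countability. Everything else is elementary bookkeeping around the bi-sorted structure $(M^{\Omega^2},N^\Omega)_{\mbA_P}$ and the already-established description of elements of $\Aut(M^R)$ fixing the auxiliary sort.
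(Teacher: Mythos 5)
Your proof is correct and follows essentially the same route as the paper: the decisive steps --- testing $g$ against constant random variables from a countable dense subset of $N$, unifying the null sets by countability, passing from the dense set to all of $N$ by continuity of the isometries $g(\omega)$, and running the same argument for $g^{-1}$ to upgrade $g(\omega)(N)\subseteq N$ to $g(\omega)\in G_P$ almost everywhere --- are exactly those of the paper's second paragraph, and your use of Lemma~\ref{identity on the auxiliary sort} to place $g$ in $G^{\Omega^2}$ matches the paper as well. The only divergence is in the inclusion $G_P^*\subseteq\{g\in G^{\Omega^2}:g|_{N^\Omega}\in\Aut(N)^\Omega\}$, where the paper shortcuts your hands-on construction of $h$ by noting that $g$ restricts to an automorphism of the randomization $N^R$ of $N$ which is the identity on $\mbA_\Omega$, and then citing Theorem~\ref{thm G^R} and Lemma~\ref{identity on the auxiliary sort} for $N^R$; your direct argument simply re-derives that special case and is equally valid.
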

\begin{proof} Let $N^R$ denote the smaller model of the pair $(M^{\Omega^2},N^\Omega)_{\mbA_P}$, that is, $N^\Omega$ together with the auxiliary sort $\mbA_\Omega$. If $g\in G^R_P$, then the restriction of $g$ to the $N^R$ is an automorphism of $N^R$, that is, $g|_{N^R}\in\Aut(N)\wr\Omega$. If moreover $g$ is the identity on $\mbA_P$ (which is the case if $g\in G_P^*$), then $g\in G^{\Omega^2}$ and $g|_{N^R}$ is the identity on $\mbA_\Omega$, so $g|_{N^R}\in\Aut(N)^\Omega$. 

Conversely, if we have $g\in G^{\Omega^2}$ and $g|_{N^\Omega}\in\Aut(N)^\Omega$, then $g$ fixes $N^\Omega$ setwise (and $\mbA_\Omega$ pointwise), so $g\in G^R_P$. Also, $g\in (G_P)^{\Omega^2}$. Indeed, let $b$ be an element in $N$, which we may see as a constant element of $M^{\Omega^2}$. Since $g\in G^R_P$, we have $d(gb,N^\Omega)=d(b,N^\Omega)=0$, hence $g(\omega)(b)\in N$ almost surely. By the separability of $N$, this is true for every $b\in N$ and every $\omega$ in a common full measure set. Similarly for $g^{-1}$. Thus, $g(\omega)\in G_P$ almost surely.\end{proof}

\begin{cor}\label{G^RP} $G^R_P\simeq G_P^*\rtimes (\Aut(\Omega_1)\wr\Omega_0)$ as topological groups.\end{cor}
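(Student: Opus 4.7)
My plan is to imitate the proof of Theorem~\ref{thm G^R}, with the auxiliary sort $\mbA$ replaced by the pair $\mbA_P$. Restriction of an automorphism of the pair to its auxiliary sort yields a continuous homomorphism $\rho\colon G^R_P\to\Aut(\mbA_P)$, which by the preceding lemma has kernel exactly $G_P^*$. Using the bi-interpretability of $\mbA_P$ with $(\mbA_{\Omega_1})^R$ recalled just above, the target is identified with $\Aut(\Omega_1)\wr\Omega_0$. To conclude I must exhibit a continuous section $\iota\colon\Aut(\Omega_1)\wr\Omega_0\to G^R_P$ of $\rho$.

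For the section, I would use the natural measure-preserving action of $\Aut(\Omega_1)\wr\Omega_0=\Aut(\Omega_1)^{\Omega_0}\rtimes\Aut(\Omega_0)$ on $\Omega^2=\Omega_0\times\Omega_1$: the $\Aut(\Omega_1)^{\Omega_0}$-factor acts fiberwise over $\Omega_0$, and $\Aut(\Omega_0)$ acts on the first coordinate. Both factors preserve the sub-$\sigma$-algebra $\Omega\subset\Omega^2$ generated by the projection to $\Omega_0$. The standard formula $(tr)(\omega)=r(t^{-1}(\omega))$ then lifts such a transformation to an action on $M^{\Omega^2}$ carrying $N^\Omega$ (the $\Omega$-measurable, $N$-valued random variables) to itself, and to an action on $\mbA_{\Omega^2}$ preserving $\mbA_\Omega$; hence each such transformation belongs to $G^R_P$, defining $\iota$. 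Continuity of $\iota$ reduces to the continuity of the corresponding action on $\Omega^2$, and $\rho\circ\iota=\mathrm{id}$ is immediate once one traces through the identification $\Aut(\mbA_P)\simeq\Aut(\Omega_1)\wr\Omega_0$.

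With the splitting in hand, the rest is formal: every $\sigma\in G^R_P$ decomposes uniquely as $\sigma=(\sigma\,\iota(\rho(\sigma))^{-1})\cdot\iota(\rho(\sigma))$, the first factor lying in $\ker\rho=G_P^*$. The map $(g,\tau)\mapsto g\,\iota(\tau)$ is therefore a continuous group isomorphism $G_P^*\rtimes(\Aut(\Omega_1)\wr\Omega_0)\to G^R_P$ with continuous inverse $\sigma\mapsto(\sigma\,\iota(\rho(\sigma))^{-1},\rho(\sigma))$, giving the isomorphism as topological groups. The one point I expect to need a careful check is that the natural action of $\Aut(\Omega_1)\wr\Omega_0$ on $\Omega^2$ genuinely preserves the $\Omega$ sub-$\sigma$-algebra (and therefore really does lift to an automorphism of the pair); everything else follows the template of Theorem~\ref{thm G^R} together with the preceding lemma.
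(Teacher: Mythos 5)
Your proof is correct and follows essentially the same route as the paper: the paper likewise restricts automorphisms of the pair to the auxiliary sort, uses the moreover part of the preceding lemma to identify the kernel with $G_P^*$, and observes (as in the proof of Theorem~\ref{thm G^R}) that $\Aut(\Omega_1)\wr\Omega_0$ sits inside $G^R_P$ as a complement via its natural action on $\Omega^2\simeq\Omega_0\times\Omega_1$. The point you flag for careful checking — that this action preserves the factor $\Omega$ — is indeed fine, since the fiberwise part fixes the first coordinate and $\Aut(\Omega_0)$ acts only on it.
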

\begin{proof}
As in the proof of Theorem~\ref{thm G^R}, the moreover part of the previous lemma shows that $G_P^*$ is the normal complement of $H_{RP}$, which is what we want.
\end{proof}

The previous description can be used to show that the action $G^P_R\actson (M^{\Omega^2},N^\Omega)_{\mbA_P}$ is not approximately oligomorphic if $M$ is not compact (adapting the method of proof of Proposition~\ref{wreath on rando is app olig}, but with the opposite conclusion; for this, Lemma~\ref{G times AutOmega on XOmega} is useful). This gives an alternative proof of Corollary~\ref{cor SFB not preserved}, modulo showing that $(M^{\Omega^2},N^\Omega)_{\mbA_P}\models (T^R)_P$ for some $N\prec M$ (which is easier by assuming $(M,N)\models T_P$). This was actually our original proof of Corollary~\ref{cor SFB not preserved}.

\noindent\hrulefill

\bibliographystyle{amsalpha}
\bibliography{biblio}

\providecommand{\bysame}{\leavevmode\hbox to3em{\hrulefill}\thinspace}
\providecommand{\MR}{\relax\ifhmode\unskip\space\fi MR }
\providecommand{\MRhref}[2]{%
  \href{http://www.ams.org/mathscinet-getitem?mr=#1}{#2}
}
\providecommand{\href}[2]{#2}
\begin{thebibliography}{BBHU08}

\bibitem[AGK15a]{andgolkeiDef}
U.~Andrews, I.~Goldbring, and H.~J. Keisler, \emph{Definable closure in
  randomizations}, Ann. Pure Appl. Logic \textbf{166} (2015), no.~3, 325--341.
  \MR{3292807}

\bibitem[AGK15b]{andgolkeiInd}
\bysame, \emph{Independence relations in randomizations}, arXiv:1409.1531
  [math.LO].

\bibitem[AK15]{andkeiSep}
U.~Andrews and H.~J. Keisler, \emph{Separable models of randomizations}, J.
  Symb. Log. \textbf{80} (2015), no.~4, 1149--1181. \MR{3436363}

\bibitem[BBH14]{bbhSFB14}
I.~{Ben Yaacov}, A.~Berenstein, and C.~W. Henson, \emph{Almost indiscernible
  sequences and convergence of canonical bases}, J. Symb. Log. \textbf{79}
  (2014), no.~2, 460--484. \MR{3224976}

\bibitem[BBHU08]{bbhu08}
I.~{Ben Yaacov}, A.~Berenstein, C.~W. Henson, and A.~Usvyatsov, \emph{Model
  theory for metric structures}, Model theory with applications to algebra and
  analysis. {V}ol. 2, London Math. Soc. Lecture Note Ser., vol. 350, Cambridge
  Univ. Press, Cambridge, 2008, pp.~315--427. \MR{2436146 (2009j:03061)}

\bibitem[{Ben}09]{benVapChe}
I.~{Ben Yaacov}, \emph{Continuous and random {V}apnik-{C}hervonenkis classes},
  Israel J. Math. \textbf{173} (2009), 309--333. \MR{2570671 (2011j:03072)}

\bibitem[{Ben}12]{benOnuniform}
\bysame, \emph{On uniform canonical bases in {$L\sb p$} lattices and other
  metric structures}, J. Log. Anal. \textbf{4} (2012), Paper 12, 30.
  \MR{2955050}

\bibitem[{Ben}13]{benOntheories}
\bysame, \emph{On theories of random variables}, Israel J. Math. \textbf{194}
  (2013), no.~2, 957--1012. \MR{3047098}

\bibitem[{Ben}14]{ben13}
\bysame, \emph{Model theoretic stability and definability of types, after {A}.
  {G}rothendieck}, Bull. Symb. Log. \textbf{20} (2014), no.~4, 491--496.
  \MR{3294276}

\bibitem[{Ben}15]{benBohr}
\bysame, \emph{On {R}oelcke precompact {P}olish groups which cannot act
  transitively on a complete metric space}, to appear in Israel Journal of
  Mathematics, arXiv:1510.00238 [math.LO].

\bibitem[Ber74]{berberian}
S.~K. Berberian, \emph{Lectures in functional analysis and operator theory},
  Springer-Verlag, New York-Heidelberg, 1974, Graduate Texts in Mathematics,
  No. 15. \MR{0417727 (54 \#5775)}

\bibitem[BIT15]{bit15}
I.~{Ben Yaacov}, T.~Ibarluc\'ia, and T.~Tsankov, \emph{Eberlein oligomorphic
  groups}, arXiv:1602.05097 [math.LO].

\bibitem[BK09]{benkei}
I.~{Ben Yaacov} and H.~J. Keisler, \emph{Randomizations of models as metric
  structures}, Confluentes Math. \textbf{1} (2009), no.~2, 197--223.
  \MR{2561997}

\bibitem[BT16]{bentsa}
I.~{Ben Yaacov} and T.~Tsankov, \emph{Weakly almost periodic functions,
  model-theoretic stability, and minimality of topological groups}, Trans.
  Amer. Math. Soc. \textbf{368} (2016), no.~11, 8267--8294. \MR{3546800}

\bibitem[BU07]{benusv-d-finiteness}
I.~{Ben Yaacov} and A.~Usvyatsov, \emph{On {$d$}-finiteness in continuous
  structures}, Fund. Math. \textbf{194} (2007), no.~1, 67--88. \MR{2291717
  (2007m:03080)}

\bibitem[BU10]{benusv10}
\bysame, \emph{Continuous first order logic and local stability}, Trans. Amer.
  Math. Soc. \textbf{362} (2010), no.~10, 5213--5259. \MR{2657678
  (2012a:03095)}

\bibitem[CM97]{cemmenBook}
P.~Cembranos and J.~Mendoza, \emph{Banach spaces of vector-valued functions},
  Lecture Notes in Mathematics, vol. 1676, Springer-Verlag, Berlin, 1997.
  \MR{1489231 (99f:46049)}

\bibitem[DU77]{dieuhlBook}
J.~Diestel and J.~J. Uhl, Jr., \emph{Vector measures}, American Mathematical
  Society, Providence, R.I., 1977, With a foreword by B. J. Pettis,
  Mathematical Surveys, No. 15. \MR{0453964 (56 \#12216)}

\bibitem[Fre06]{fremVol4}
D.~H. Fremlin, \emph{Measure theory. {V}ol. 4}, Torres Fremlin, Colchester,
  2006, Topological measure spaces. Part I, II, Corrected second printing of
  the 2003 original. \MR{2462372}

\bibitem[Gla03]{glasnerBook}
E.~Glasner, \emph{Ergodic theory via joinings}, Mathematical Surveys and
  Monographs, vol. 101, American Mathematical Society, Providence, RI, 2003.
  \MR{1958753}

\bibitem[GM12]{glamegNotl1}
E.~Glasner and M.~Megrelishvili, \emph{Representations of dynamical systems on
  {B}anach spaces not containing {$l\sb 1$}}, Trans. Amer. Math. Soc.
  \textbf{364} (2012), no.~12, 6395--6424. \MR{2958941}

\bibitem[GM14]{glamegSurvey}
\bysame, \emph{Representations of dynamical systems on {B}anach spaces}, Recent
  progress in general topology. {III}, Atlantis Press, Paris, 2014,
  pp.~399--470. \MR{3205489}

\bibitem[Iba16]{iba14}
T.~Ibarluc\'ia, \emph{The dynamical hierarchy for {R}oelcke precompact {P}olish
  groups}, Israel J. Math. \textbf{215} (2016), no.~2, 965--1009. \MR{3552300}

\bibitem[Kec95]{kechrisDST}
A.~S. Kechris, \emph{Classical descriptive set theory}, Graduate Texts in
  Mathematics, vol. 156, Springer-Verlag, New York, 1995. \MR{1321597
  (96e:03057)}

\bibitem[Kec10]{kechrisGlobal}
\bysame, \emph{Global aspects of ergodic group actions}, Mathematical Surveys
  and Monographs, vol. 160, American Mathematical Society, Providence, RI,
  2010. \MR{2583950}

\bibitem[Kei99]{kei99}
H.~J. Keisler, \emph{Randomizing a model}, Adv. Math. \textbf{143} (1999),
  no.~1, 124--158. \MR{1680650 (2000f:03094)}

\bibitem[KLM15]{kailem}
A.~Ka{\"{\i}}chouh and F.~Le~Ma{\^{\i}}tre, \emph{Connected {P}olish groups
  with ample generics}, Bull. Lond. Math. Soc. \textbf{47} (2015), no.~6,
  996--1009. \MR{3431579}

\bibitem[LM14]{lemaitreThese}
F.~Le~Ma{\^{\i}}tre, \emph{Sur les groupes pleins pr{\'e}servant une mesure de
  probabilit{\'e}}, Ph.D. thesis, {\'E}cole {N}ormale {S}up{\'e}rieure de
  {L}yon, 2014.

\bibitem[Meg08]{megHilb}
M.~Megrelishvili, \emph{Reflexively representable but not {H}ilbert
  representable compact flows and semitopological semigroups}, Colloq. Math.
  \textbf{110} (2008), no.~2, 383--407. \MR{2353912 (2008k:54062)}

\bibitem[Pil96]{pil96}
A.~Pillay, \emph{Geometric stability theory}, Oxford Logic Guides, vol.~32, The
  Clarendon Press, Oxford University Press, New York, 1996, Oxford Science
  Publications. \MR{1429864 (98a:03049)}

\bibitem[Pis78]{pis78}
G.~Pisier, \emph{Une propri\'et\'e de stabilit\'e de la classe des espaces ne
  contenant pas {$l\sp{1}$}}, C. R. Acad. Sci. Paris S\'er. A-B \textbf{286}
  (1978), no.~17, A747--A749. \MR{0511805 (58 \#23520)}

\bibitem[Sht94]{shtern94}
A.~I. Shtern, \emph{Compact semitopological semigroups and reflexive
  representability of topological groups}, Russian J. Math. Phys. \textbf{2}
  (1994), no.~1, 131--132. \MR{1297947 (95j:22008)}

\bibitem[Tsa12]{tsaUnitary}
T.~Tsankov, \emph{Unitary representations of oligomorphic groups}, Geom. Funct.
  Anal. \textbf{22} (2012), no.~2, 528--555. \MR{2929072}

\end{thebibliography}

\end{document}